\documentclass[11pt,pdflatex, makeidx]{amsart}
\usepackage{mathpazo}
\usepackage{bm,mathrsfs}
\usepackage{enumerate}
\usepackage[table]{xcolor}
\usepackage{setspace}


\usepackage{pdfpages} 
\usepackage[colorlinks=true, linkcolor=blue]{hyper ref}
\usepackage{multicol} 

\usepackage{ulem}
\usepackage{amssymb}
\usepackage{graphicx, epsfig}
\usepackage{latexsym, amsfonts, amscd, amsmath}
\usepackage{mathrsfs}
\usepackage{lscape}
\usepackage{wrapfig}
\usepackage{tabularx}
\makeatletter
\def\hlinewd#1{%
\noalign{\ifnum0=`}\fi\hrule \@height #1 %
\futurelet\reserved@a\@xhline}
\makeatother

\makeindex \setcounter{tocdepth}{4}
\input xy
\xyoption{all}
\usepackage{wallpaper}

 \voffset = -60pt \hoffset = -60pt \textwidth =
460pt \textheight =660pt \headheight = 12pt \headsep = 20pt

\definecolor{orange}{rgb}{1,0.5,0}
\definecolor{Indigo}{rgb}{0.2,0.1,0.7}
\definecolor{Violet}{rgb}{0.5,0.1,0.7}



\newtheorem{thm}{Theorem}[subsection]
\newtheorem{prop}[thm]{Proposition}
\newtheorem{lem}[thm]{Lemma}
\newtheorem{cor}[thm]{Corollary}

\theoremstyle{definition}
\newtheorem{dfn}[thm]{Definition}

\theoremstyle{remark}
\newtheorem{rmk}[thm]{Remark}


\newcommand{\Aut}{{\operatorname{Aut}}}

\newcommand{\End}{{\operatorname{End}}}

\newcommand{\Hom}{{\operatorname{Hom}}}

\newcommand{\Ker}{{\operatorname{Ker}}}

\newcommand{\ord}{{\operatorname{ord }}}
\newcommand{\Pic}{{\operatorname{Pic }}}

\newcommand{\Spec}{{\operatorname{Spec }}}
\newcommand{\Stab}{{\operatorname{Stab }}}

\newcommand{\SL}{{\operatorname{SL }}}
\newcommand{\GL}{{\operatorname{GL}}}
\newcommand{\Gal}{{\operatorname{Gal}}}

\newcommand{\PGL}{{\operatorname{PGL}}}




\newcommand{\gerd}{{\frak{d}}}

\newcommand{\gerf}{{\frak{f}}}

\newcommand{\germ}{{\frak{m}}}

\newcommand{\gerp}{{\frak{p}}}


\newcommand{\uE}{{\underline{E}}}


\newcommand{\calC}{{\mathcal{C}}}

\newcommand{\calE}{{\mathcal{E}}}

\newcommand{\calO}{{\mathcal{O}}}
\newcommand{\calP}{{\mathcal{P}}}

\newcommand{\calR}{{\mathcal{R}}}

\def\AA{\mathbb{A}}

\def\CC{\mathbb{C}}

\def\FF{\mathbb{F}}
\def\GG{\mathbb{G}}
\def\HH{\mathbb{H}}

\def\KK{\mathbb{K}}

\def\NN{\mathbb{N}}

\def\PP{\mathbb{P}}
\def\QQ{\mathbb{Q}}

\def\ZZ{\mathbb{Z}}


\newcommand{\scrA}{{\mathscr{A}}}
\newcommand{\scrB}{{\mathscr{B}}}
\newcommand{\scrC}{{\mathscr{C}}}

\newcommand{\scrE}{{\mathscr{E}}}
\newcommand{\scrF}{{\mathscr{F}}}
\newcommand{\scrG}{{\mathscr{G}}}
\newcommand{\scrH}{{\mathscr{H}}}

\newcommand{\scrL}{{\mathscr{L}}}

\newcommand{\scrR}{{\mathscr{R}}}


\newcommand{\id}{{\noindent}}


\newcommand{\arr}{{\; \rightarrow \;}}
\newcommand{\Arr}{{\; \longrightarrow \;}}

\newcommand{\injects}{{\; \hookrightarrow \;}}

\newcommand{\fpbar}{\overline{\mathbb{F}}_p}
\newcommand{\Spf}{\operatorname{Spf }}


\newcommand{\spe}{{\operatorname{sp }}}

\newcommand{\Fbar}{\overline{F}}

\newcommand{\LambdaEN}{\Lambda_\ell(\uE,N)}

\newcommand{\LambdaE}{\Lambda_\ell(E,1)}
\newcommand{\LambdaENv}{\Lambda_\ell(\uE,N) ^V}

\newcommand{\LambdaEundir}{\Lambda_\ell^{{\rm ud}}(E,1)}

\newcommand{\Cbar}{\overline{C}}

\newcommand{\Walk}{{\rm Walk}}

\newcommand{\Abar}{\overline{A}}

\begin{document}
\marginparwidth 50pt

\title {$p$-adic Dynamics of Hecke Operators on Modular Curves}
\author{Eyal Z. Goren \& Payman L Kassaei}
\address{Department of Mathematics and Statistics, McGill University, 805 Sherbrooke St. W., Montreal H3A 0B9, QC,
Canada.}
\address{Department of Mathematics, King's College London, Strand, London WC2R 2LS, UK} \email{eyal.goren@mcgill.ca;
payman.kassaei@kcl.ac.uk} 
\subjclass{Primary 11F32, 11G18;  Secondary
11G15, 14G35}
\begin{abstract} In this paper we study the $p$-adic dynamics of  prime-to-$p$ Hecke operators on the set of points of modular curves in both cases of good ordinary and supersingular reduction. We pay special attention to the dynamics on the set of CM points. In the case of ordinary reduction we employ the Serre-Tate  coordinates, while in the case of supersingular reduction we use a parameter on the deformation space of the unique formal group of height $2$ over $\fpbar$, and take advantage of the Gross-Hopkins period map.    
\end{abstract}
\maketitle
\setcounter{tocdepth}{2}

\section{Introduction}
\id In this paper we study the dynamics of Hecke operators on the modular curves $X_1(N)$ in the $p$-adic topology. The motivation for this paper initially came from a desire to understand the impact of this dynamics on the study of the canonical subgroup over Shimura varieties, as part of the continuation of our work \cite{GK}. While studying this problem, we realized that even in the simplest case of Shimura varieties, the modular curves, there are significant challenges. Our work on this specific setting is the content of this paper; the techniques introduced here will be useful in studying much more general situations that we hope to discuss in future work. 

There are other motivations to study this problem.  Let $R$ be the maximal order of a finite field extension of $\QQ_p$. Let $\kappa$ denote  the residue field of $R$. Let $X$ be a Shimura variety of PEL type with a smooth integral model over $R$, over which the action of the prime-to-$p$ Hecke algebra extends.  The action on the special fibre $X_\kappa$ has been studied extensively by Chai and Oort, as part of Oort's philosophy of foliations on Shimura varieties of PEL type \cite{Chai, CO2, Oort}. From our perspective, the work of Chai and Oort is a ``mod $p$ shadow" of the $p$-adic dynamics of Hecke operators.

Secondly, Clozel and Ullmo \cite{ClozelUllmo1, ClozelUllmo2} have studied dynamics of Hecke operators over the complex numbers in connection with the Andr\'e-Oort conjecture, bringing in tools from measure theory and ergodic theory. Our work resonates well with theirs as our main interest lies in the action of Hecke operators on special subvarieties, albeit in the $p$-adic topology. In particular, in this paper we pay special attention to the action of Hecke operators on CM points. More than that, somewhat serendipitously, after  non-trivial reductions, including  via the Gross-Hopkins period morphism, our analysis uses results on random walks on groups and ergodic theory. 

When this work was essentially completed, we learned of interesting work on the same topic by  Herrero-Menares-Rivera-Letelier \cite{HMR1}; see also the forthcoming \cite{HMR2}. The points of view taken in our respective works are different, and the results fit well together to shed light on complementary aspects of the theory. While we are interested in the distribution of the set of points in a Hecke orbit in the $p$-adic topology, {\it loc. cit.}  considers the convergence of a sequence of Hecke orbits to the Gauss point of the affine Berkovich line. Unlike our work, \cite{HMR1} considers only modular curves of level $1$, but it is believed that their results hold in more general levels. See also the recent preprint \cite{Dis}. 


\

We now briefly discuss the contents of this paper. Our main interest is in the action of the prime-to-$p$ Hecke operators on CM points (points on the modular curve that correspond to CM elliptic curves) regarded as the special Shimura subvarieties of the modular curve $X_1(N)$. We denote a point of $X_1(N)(\CC_p)$ by $x = (E_x, P_x)$, where $E_x$ is an elliptic curve and $P_x$ a point of order $N$ of it. We simplify the analysis by looking at the action of the (iterations of) a single Hecke operator $T_\ell$, where $\ell \neq p$ is a prime. A key point of our approach is to use reduction mod $p$ to propel results from characteristic~$p$ to characteristic~$0$. 
The analysis breaks naturally into two cases: (i) ordinary reduction and (ii) supersingular reduction. That is, we distinguish between the case where the elliptic curve $E_x$ has good ordinary reduction, and the case where it has supersingular reduction. The case of bad reduction is not important to us as our main interest is in CM points; these always have (potentially) good reduction. That being said, the methods in the ordinary case can easily be applied to the case of bad reduction.

\medskip

\id {\bf Ordinary reduction.}  The locus of ordinary reduction in the modular curve $X_1(N)$ can be partitioned into a union of residue discs which are shuffled around via the Hecke operator $T_\ell$ in a way that can be fully understood using the theory of $\ell$-isogeny volcanoes. In fact, the centre of these discs, i.e., the {\it canonical lifts}, are moved around exactly according to the $\ell$-isogeny volcano. Consequently, to study the dynamics of $T_\ell$ acting on a point $x$ of ordinary reduction, one needs to  understand the returns of the orbit to the residue disc $D$ containing $x$. It transpires that this question can be reduced to the dynamics of a {\it single} automorphism of the disc, induced by a particular endomorphism $f$  of $\bar E$, the reduction of $E_x$. At this point, we switch gears and, by means of Serre-Tate theory, view the residue disc as the rigid-analytic generic fiber of the formal scheme that is the universal deformation space of $\bar E$. As such, the disc has a natural parameter $t$, where the canonical lift corresponds to $t = 0$. We prove (Theorem~\ref{thm: dynamics on the disc}), using \cite{Katz}, that the action of $f$ on the disc is given by 
\[ t \mapsto (1+t)^{\bar f/f} - 1,\]
where $f$ and its complex conjugate $\bar f$ are viewed as elements of $\ZZ_p^\times$ via their action on the $p$-adic Tate module of $\bar E$. This provides a very clear picture of the orbits and their closures and allows us to completely determine the dynamics of $T_\ell$ on the CM points in $D$, which turn out to be periodic points for the action of $f$; see \S \ref{sec: dynamics on ordinary}, in particular, Propositions~\ref{prop: 1234}, \ref{cor: summary of T ell orbit}, \ref{prop: 1235},  \ref{prop: 1236}. 

\medskip

Throughout our discussion  in \S\S \ref{section:mod p isogeny graph} - \ref{sec: dynamics on ordinary} of the action of Hecke operators on elliptic curves $E$ with ordinary reduction $\bar E$, we assume that 

\id  \begin{center} $(\star) \qquad \End^0(E) \not\cong \QQ(i), \QQ(\rho),$\end{center} 

\id where $i^2 = -1, \rho^2 + \rho+1 = 0$. This guarantees that for any elliptic curve $\bar E^\prime$ isogenous to $\bar E$, we have $\Aut(\bar E^\prime) = \{ \pm 1\}$. This assumption is made for simplicity only; the methods extend to the general case.

 \

\id {\bf Supersingular reduction.} Once more, our analysis begins in characteristic $p$, where we introduce a class of directed graphs $\Lambda_\ell(N)$ that we call ``supersingular graphs". These graphs are defined in a way that encodes the action of $T_\ell$ on the supersingular points in $X_1(N)(\fpbar)$. Roughly speaking, vertices of $\Lambda_\ell(N)$ correspond to supersingular points of $X_1(N)(\fpbar)$, and edges to  isogenies of degree $\ell$. They are directed graphs of outgoing degree $\ell+1$, and usually of ingoing degree $\ell+1$ as well. They serve as level $\Gamma_1(N)$ analogues of the (graphs defined by the) Brandt matrix $B(\ell)$. 

Again, the finitely many residue discs of supersingular reduction in $X_1(N)$ move under the iterations of $T_\ell$ according to walks in the graph $\Lambda_\ell(N)$. However, the returns under $T_\ell$ to a particular supersingular disc  turns out to be much more complicated than in the ordinary case. Fixing a supersingular residue disc $D\subset X_1(N)(\CC_p)$ specializing to  a supersingular elliptic curve $(\bar{E},\bar{P})$ in characteristic $p$, the branches of the iterations of $T_\ell$ that return to $D$ generate a subgroup $\scrH_N \subset (\End(\bar{E})\otimes \ZZ_p)^\times=:\scrR^\times$, $\scrR$ being the maximal order in the unique non-split quaternion algebra over $\QQ_p$. 
 The nature of the groups $\scrH_N$ is not entirely clear. We prove several results about these groups and, in particular, show that they have an abundance of mutually non-commuting elements; enough so that they are dense in $\GL_2$ relative to the Zariski topology, under the inclusion $\scrR^\times \injects \GL_2(\QQ_q)$, where $q = p^2$, and $\QQ_q = W(\FF_q)[p^{-1}]$.

To study the action of the highly complicated non-abelian  group $\scrH_N$ on $D$, we once again interpret  $D$ as the rigid-analytic space associated to a universal deformation space, this time of a $1$-dimensional formal group of height $2$. This allows us to use the work of Gross-Hopkins \cite{GH} and Lubin-Tate \cite{LT}. Using the equivariant Gross-Hopkins period morphism $\Phi: D \arr \PP^1$, we may study of the action of $\scrH_N$ on $D$ via the action of  M\"obius transformations on $\PP^1$. We define open discs $J \subset D, U \subset \PP^1$ such that $\Phi$ restricts to an isomorphism $J \arr U$. We show that all CM points of discriminant prime to $p$ lie in $J(W(\FF_q))$, which is in bijection with $U(W(\FF_q))$, the open unit disc in $W(\FF_q)$.  The key point now is that we are able to obtain a very explicit description of the $p$-adic closure $\HH_N$ of $\scrH_N$ in $\scrR^\times$.  We equip $\HH_N$ with a measure coming from the action of~$T_\ell$.  We translate our questions about the Hecke orbit of a CM point $x\in D$ under $T_\ell$ to questions about stationary measures for the action of $\HH_N$. Using the description of $\HH_N$ and the work of Benoist-Quint \cite{BQ1}, we conclude that there is a unique stationary probability measure (namely, a measure fixed under the action of $\HH_N$), independent of $x$ and~$N$, whose support is $J(W(\FF_q))$. See Theorems~\ref{cor: 5104} - \ref{thm:5107}.

\section{The $\ell$-isogeny graph: the ordinary case}\label{section:mod p isogeny graph}
Before defining the various $\ell$-isogeny graphs of elliptic curves, we recall some definitions from graph theory.

\subsection{Graph theory terminology} Let $\Lambda$ be a directed graph. We will call a directed edge, $v \arr w$, an {\it arrow}. A {\it walk} in $\Lambda$ is a sequence of arrows $v_{i} \overset{e_i}{\arr} w_i$ for $i=1,..,d$, where $w_i=v_{i+1}$ for $i=1,...,d-1$.   This walk is called a closed walk if $v_1=w_d$.  We allow the empty walk and consider it a closed walk. If $\omega_1$, $\omega_2$ are walks in $\Lambda$ such that the end point of $\omega_1$ is the starting point of $\omega_2$, we denote $\omega_2 \circ \omega_1$ the walk obtained by traversing $\omega_1$ followed by $\omega_2$. If $v$ is a vertex in $\Lambda$, we define $\Omega(\Lambda,v)$ to be the monoid of closed walks in $\Lambda$ starting and ending at $v$ under $\circ$. We often write $\omega_2\omega_1$ for $\omega_2 \circ \omega_1$.

If $\omega$ is a closed walk in $\Lambda$ starting at $v_1$ and traversing, in order, vertices $v_1,v_2,\cdots,v_d=v_1$, we can view $\omega$ as a closed walk in $\Omega(\Lambda,v_i)$ for all $1 \leq i \leq d$. We will often abuse notation and denote all these walks by the same notation if the starting point of the walk is understood.

A \textit{path} is a walk in which vertices do not repeat (and so a path is necessarily open). In a \textit{cycle}, vertices do not repeat, except that the last vertex is equal to the first (and so a cycle is closed). 

An undirected graph is called {\it regular} if every vertex has the same degree. Every loop, i.e., an edge between a vertex and itself, contributes one to the degree at the vertex. A directed graph is called \textit{regular of degree $r$} if the in-degree and the out-degree of each vertex equal $r$.  An arrow of the form $v \arr v$ contributes one to both the in-degree and the out-degree. A directed graph is called \textit{connected} if given any two vertices $x, y$, there is a walk starting from $x$ and ending in $y$.

Let $\ell$ be a positive integer. An infinite connected undirected graph $\Sigma$ is called an $\ell$-{\it volcano} if there is a surjective function $b\colon \Sigma^V \arr \NN$ (where $\Sigma^V$ is the set of vertices) such that
\begin{enumerate}
\item $\Sigma$ is $(\ell +1)$-regular,
\item $b^{-1}(0)$ with its induced subgraph structure (called the {\it rim}) is a regular graph of degree at most $2$,
\item for $i>0$, each vertex in $b^{-1}(i)$ is connected to a unique vertex in $b^{-1}(i-1)$ and to~$\ell$ vertices in $b^{-1}(i+1)$.
\end{enumerate}

\subsection{Definition of the ordinary $\ell$-isogeny graphs}
Let $p,\ell$ be distinct primes. Let $N\geq 1$ be an integer such that $(N,p\ell)=1$. In the following we will study the $\ell$-isogeny graph of $\uE=(E,P)$, consisting of an ordinary elliptic curve $E$ over $\fpbar$ with a point $P$ of order  $N$ on it (cf. \cite{Sutherland} in the case $N=1$). Recall our standing assumption: $(\star)\;\; \End^0(E) \not\cong \QQ(i), \QQ(\rho)$. 
For two such objects, $\uE_1=(E_1,P_1)$ and $\uE_2=(E_2,P_2)$, we write 
\[ \uE_1 \overset{\ell}{\sim} \uE_2 \]
if there is an isogeny $\lambda\colon E_1 \arr E_2$ of degree a power of $\ell$ such that $\lambda(P_1)= P_2$. This defines an equivalence relation on the set of isomorphism classes of all such pairs (to see the symmetry, let $\alpha(N)$ denote the order of $\ell$ in $(\ZZ/N\ZZ)^\times$. Then  $\deg(\lambda)^{\alpha(N)-1}\lambda^\vee\colon \uE_2 \arr \uE_1$ takes $P_2$ to $P_1$). 

We define a directed graph $\LambdaEN$ as follows. 
The set of vertices $\LambdaENv$ is the set of isomorphism classes of all pairs $\uE_1=(E_1,P_1)$ over $\fpbar$ that satisfy $\uE_1  \overset{\ell}{\sim} \uE$. We define an arrow between two such isomorphism classes $[(E_1,P_1)]$, $[(E_2,P_2)]$ to be an isogeny $\lambda\colon (E_1,P_1) \arr (E_2,P_2)$ of degree $\ell$, where we identify two such isogenies up to isomorphism. In other words,  arrows corresponding to $\lambda\colon (E_1,P_1) \arr (E_2,P_2)$ and $\lambda'\colon (E_1' ,P_1')\arr (E_2',P_2')$ are identified if there is a commutative diagram
\[ 
\xymatrix{
(E_1,P_1) \ar[d]_{f_1}\ar[r]^{\lambda} & (E_2,P_2) \ar[d]^{f_2} \\
(E_1',P_1') \ar[r]^{\lambda'} & (E_2',P_2')
}
\]
where $f_1$, $f_2$ are isomorphisms. Note that $\LambdaEN$ is a connected directed graph. Also, due to our assumption on $\Aut(E)$, when $N >2$, once we fix representatives $(E_1,P_1)$ and $(E_2,P_2)$ above, an arrow between the corresponding vertices determines a unique isogeny of degree $\ell$ from $(E_1,P_1)$ to $(E_2,P_2)$. When $N\leq 2$, this isogeny is only defined up to sign. We further define
\[
\Lambda_\ell^{\rm ord}(N)=\coprod_{\uE\ {\rm ordinary}} \LambdaEN,
\]
where $\uE$ runs over all representatives $\uE=(E,P)$ modulo the equivalence relation $\overset{\ell}{\sim}$.

Let $K=\End^0(E)$ be the algebra of rational endomorphisms of $E$, which is a quadratic imaginary field of discriminant $\Delta=\Delta(E)$ ($\Delta \neq -3, -4$).  We denote the order of conductor $c$ in $K$ by~$R_c$. If $\uE \overset{\ell}{\sim} \uE_1$ then $\End(E_1)$ is an order in $K$  of conductor 
\[
\gerf(E_1)=\ell^r m,
\]
 where $(m,p\ell)=1$, and $r \geq 0$. Note that $m$ is an invariant of the equivalence classes under $\overset{\ell}{\sim}$. The discriminant $\gerd(E_1)$ of $\End(E_1)$ satisfies $\gerd(E_1)=\gerf(E_1)^2\Delta(E_1)$. We can define a level function  
 \[
 b_N\colon \LambdaENv \arr \NN
 \] 
by setting $b_N([\uE_1])=r$.

\subsection{The isogeny associated to a walk}   Let $\omega$ be a walk $[\uE_0] \overset{\lambda_1}{\arr} [\uE_1]  \overset{\lambda_2}{\arr} ...  \overset{\lambda_d}{\arr} [\uE_d]$ in $\LambdaEN$.
Note that if $N>2$ we can think of each $\lambda_i$ as an isogeny of degree $\ell$ from $\uE_{i-1}$ to $\uE_i$; if $N\leq 2$, these isogenies are only defined up to sign. We define 
\[
\tilde{\omega}:=\lambda_d\circ...\circ\lambda_2\circ\lambda_1
\]
which is an $\ell^d$-isogeny from $E_0$ to $E_d$ sending $P_0$ to $P_d$, defined only up to sign if $N \leq 2$. In particular, if $\omega \in \Omega(\Lambda_\ell(\uE,N),[\uE])$ then $\tilde{\omega}$ is an element of $\End(E)$ of degree $\ell^d$ fixing $P$.

\subsection{Properties of the $\ell$-isogeny graphs} When $N=1$, the directed graph $\LambdaE$ has the property that  for every arrow $[E_1] \arr [E_2]$ corresponding to an $\ell$-isogeny $\lambda\colon E_1\arr E_2$, there is a corresponding arrow $[E_2] \arr  [E_1]$ given by the dual isogeny $\lambda^\vee\colon  E_2 \arr E_1$. Identifying these corresponding arrows, we obtain an {\textit{undirected}} graph denoted $\LambdaEundir$. Our convention is that if $[E_1]=[E_2]$, and if $\lambda^\vee \neq \pm\lambda$, then the resulting loop is counted with multiplicity two in $\LambdaEundir$; otherwise, it is counted with multiplicity~$1$.  The function $b_1$ defined above descends to $b_1\colon\LambdaEundir \arr \NN$. 

For a set $S$ of endomorphisms of $E$ (and in particular for a prime ideal $L$ of $\End(E)$) we let $E[S] = \cap_{s\in S} \Ker(s)$. The following proposition about the structure of the graph $\LambdaEundir$ is known (cf. \cite{Sutherland}).
 
 \begin{prop}\label{prop:volcano} Let $E$ be an ordinary elliptic curve over $\fpbar$ with $\End^0(E)\not\cong \QQ(i), \QQ(\rho)$ and $\mathfrak{f}(E) = \ell^rm$. 
 \begin{enumerate}
 \item The graph $\LambdaEundir$ is an $\ell$-volcano with respect to the level function $b_1$. The rim is a regular graph of degree $1+(\frac{\Delta(E)}{\ell})$, the vertices of which are exactly those vertices of $\LambdaEundir$ that have CM by $R_m$. (Here $(\frac{\Delta(E)}{\ell})$ is the Kronecker symbol.)
 
\item  Let $L|\ell$ be a prime ideal of $R_m$. Let $a=ord_{cl(R_m)}(L)$; hence we have  $L^a=(f)$ for $f \in R_m$. If $\ell$ is inert in $R_m$, then the rim consists of a single vertex and has no edges. If $\ell$ is ramified in $R_m$ but $L$ is not principal, then the rim consists of two vertices $[E_1]$ and $[E_1/E_1[L]]$ with exactly one edge between them - the class of the  natural projection $E_1 \arr E_1/E_1[L]$. In all other cases,  the rim consists of a cycle of length $a$ given by $$[E_1] - [E_1/E_1[L]] - \cdots - [E_1/E_1[L^a]] = [E_1]$$ (using the isomorphism $E_1/E_1[L^a] {\arr} E$ induced by $f$).
\item  The cardinality of $b_1^{-1}(i)$ equals 
\[
|b_1^{-1}(i)|=\begin{cases} ord_{cl(R_m)}(L) & i=0,\ L|\ell\ {\rm a \ prime\ ideal\ of}\ R_m,\\  (\ell-(\frac{\Delta(E)}{\ell}))\ell^{i-1}\cdot ord_{cl(R_m)}(L)  & i\geq 1.  \end{cases}
\]
 \end{enumerate}
 \end{prop}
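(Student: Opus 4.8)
The plan is to recover Proposition~\ref{prop:volcano} from the classical structure theory of $\ell$-isogeny volcanoes (\cite{Sutherland}), the substantive inputs being a single local computation together with Deuring's lifting theorem and the main theorem of complex multiplication. The key local fact is a trichotomy for the $\ell+1$ isogenies of degree $\ell$ out of a vertex $[E_1]$ with $\End(E_1)=R_c$, $c=\ell^r m$: if $r=0$ (so $\ell\nmid c$), exactly $1+(\frac{\Delta(E)}{\ell})$ of them are \emph{horizontal} (target of conductor $m$) and the remaining $\ell-(\frac{\Delta(E)}{\ell})$ are \emph{descending} (target of conductor $\ell m$); if $r\geq 1$, exactly one is \emph{ascending} (conductor $\ell^{r-1}m$) and the remaining $\ell$ are descending (conductor $\ell^{r+1}m$). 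I would prove this by lifting $E_1$ to characteristic zero by Deuring, writing it as $\CC/\gera$ for a proper $R_c$-ideal $\gera$, and sorting the index-$\ell$ sublattices $\gerb\subset\gera$ according to whether the corresponding kernel is stable under $R_c$, using the factorisation of $(\ell)$ in $R_m$; since $\ell\nmid m$ that factorisation --- and hence $(\frac{\Delta(E)}{\ell})$ --- is the one in the maximal order of $K$. Dualising interchanges ascending and descending directions, consistently with the counts.

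Granting the local fact, the three volcano axioms for $(\LambdaEundir,b_1)$ are essentially immediate: $(\ell+1)$-regularity, and the ``one edge up, $\ell$ edges down'' condition at levels $r\geq 1$, are read off directly, while the rim $b_1^{-1}(0)$ --- exactly the vertices with CM by $R_m$ --- is regular of degree equal to the horizontal count $1+(\frac{\Delta(E)}{\ell})\leq 2$. Here one has to check that replacing $\LambdaE$ by the undirected graph $\LambdaEundir$ (identifying each arrow with its dual, with the stated loop-multiplicity convention) does not change the degree; this is where the hypothesis $\End^0(E)\not\cong\QQ(i),\QQ(\omega)$, i.e.\ $\Aut(E)=\{\pm1\}$, is used. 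For the finer description in~(2), I would use that the set of $\fpbar$-isomorphism classes of elliptic curves with CM by $R_m$ is a torsor under $\Cl(R_m)$ with $[\gerb]$ acting by $E'\mapsto E'/E'[\gerb]$, and that a horizontal $\ell$-isogeny realises the action of a prime $L\mid\ell$ of $R_m$. As $(\ell)$ is principal, $[L]$ and $[\bar L]$ generate the same cyclic subgroup of $\Cl(R_m)$, so the rim of the \emph{connected} graph $\LambdaEundir$ is exactly one $\langle[L]\rangle$-orbit; inspecting this orbit produces the single vertex with no horizontal edge when $\ell$ is inert ($[L]$ trivial, $L=(\ell)$), the two vertices joined by the self-dual edge when $\ell$ ramifies with $L$ non-principal, and otherwise the cycle $[E_1]-[E_1/E_1[L]]-\cdots-[E_1/E_1[L^a]]=[E_1]$ of length $a=\operatorname{ord}_{\Cl(R_m)}(L)$ closed up by the isomorphism $E_1/E_1[L^a]{\arr}E$ induced by the generator $f$ of $L^a=(f)$, the subcase $a=1$ of this last alternative being the single vertex that carries a loop.

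Part~(3) is then bookkeeping on the volcano. Its rim, being a single $\langle[L]\rangle$-orbit, has $\operatorname{ord}_{\Cl(R_m)}(L)$ vertices, which is the $i=0$ case. For $i\geq1$, sending a descending edge out of level $i-1$ to its lower endpoint is a bijection onto $b_1^{-1}(i)$ --- the inverse attaches to a level-$i$ vertex its unique upward edge --- so $|b_1^{-1}(i)|$ equals $|b_1^{-1}(i-1)|$ times the number of descending edges at a level-$(i-1)$ vertex, namely $\ell-(\frac{\Delta(E)}{\ell})$ if $i-1=0$ and $\ell$ if $i-1\geq1$; unwinding the recursion gives the stated formula. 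The one genuinely delicate step is the bookkeeping in the previous paragraph --- tracking the identification of an arrow with its dual and the loop multiplicities while passing between isogeny classes and ideal classes, especially in the low-order cases $a\leq2$ where the ramified and the split situations look alike but yield different rim shapes. All the rest is a routine transcription of the Deuring/CM dictionary.
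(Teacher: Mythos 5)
The paper offers no proof of this proposition at all — it records it as known and cites Sutherland — and your sketch is precisely the standard argument from that literature (Kohel/Sutherland): classify the $\ell+1$ degree-$\ell$ isogenies at each vertex as horizontal/ascending/descending by lifting to characteristic zero via Deuring and sorting index-$\ell$ sublattices by their multiplier orders, identify the rim with a single $\langle [L]\rangle$-orbit under the $\Cl(R_m)$-action, and count the levels through the unique-ascending-edge bijection. Your proposal is correct, including the points that genuinely need care here (the arrow/dual identification, the loop-multiplicity convention, the low-order cases $a\leq 2$, and the use of $\Aut(E)=\{\pm 1\}$), so it supplies exactly the proof the paper delegates to its reference.
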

 
 If $[E'] \overset{\lambda}{\arr} [E'']$ is an arrow in $\Lambda_\ell(E,1)$, we denote $[E'']  \overset{\lambda^\vee}{\arr} [E']$  the arrow obtained via the dual isogeny.  Similarly, we define for a walk $\omega$ from $[E']$ to $[E'']$, a walk $\omega^\vee$ in the reverse direction.  
 
 We define two particular  walks in $\LambdaE$ as follows.
  If $\ell$ is inert in $R_m$, we define $\omega_{rim}^+=\omega_{rim}^-$ to be the empty walk. If $\ell = L^2$ is ramified in $R_m$ we define $\omega_{rim}^+=\omega_{rim}^-$ to be the unique closed walk in the rim (up to  the choice of starting point; see Remark \ref{remark:cycle} below). It is a loop if $L$ is principal and of length $2$ if it is not.   If $\ell$ is split, there are two distinct cycles in $\Lambda_\ell(E,1)$ traversing the rim of $\LambdaEundir$, one in each possible direction. Let us call them $\omega_{rim}^+$ and $\omega_{rim}^-$.  We denote  
$f_{rim}^+,f_{rim}^- \in R_m$ the isogenies associated to $\omega_{rim}^+$ and $\omega_{rim}^-$, respectively (with the usual sign ambiguity).
 
\begin{rmk}\label{remark:cycle} As explained at the beginning of this section, we are using the same notation, $\omega_{rim}^{\pm}$, to denote the several walks that start and end at any vertex along the rim. This shall not cause confusion as it is always understood what starting point is considered. Regardless of the starting point of $\omega_{rim}^+$, $f_{rim}^+$
will be the same element of $R_m$ under the natural identifications of the endomorphism rings of the elliptic curves on the rim.
 \end{rmk}

Consider the homotopy equivalence relation on the set of walks in $\Lambda_\ell(E,1)$ generated by relations
\[
\omega_2\omega_1 \sim_h \omega_2\omega^\vee \omega \omega_1,
\] 
where $\omega_1,\omega_2$ are two walks such that the endpoint of $\omega_1$ is the starting point of $\omega_2$ (call it $[E_1]$), and  $\omega$ is a walk starting at $[E_1]$. It is easy to see that Proposition \ref{prop:volcano} implies the following.

 \begin{cor} \label{cor:walks} Every element in $\Omega(\Lambda_\ell(E,1),[E_1])$ is equivalent under $\sim_h$ to a closed walk of the form
 \[
 \omega^\vee(\omega_{rim}^{\pm})^n\omega,
 \]
 where $\omega$ is a path from $[E_1]$ to the rim of the graph, and $n$ is a non-negative integer. 
\end{cor}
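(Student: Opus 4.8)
The plan is to recognize $\sim_h$ as the usual reduction of walks in a graph, and then to read off the shape of reduced closed walks from the volcano structure supplied by Proposition~\ref{prop:volcano}. First I would observe that inserting a ``there-and-back'' $\omega^\vee\omega$ of length $k$ can be carried out by $k$ successive insertions of single backtracks: peel off the innermost edge-pair and iterate (a single backtrack being the case of a one-edge $\omega$, using $(e^\vee)^\vee=e$). Hence $\sim_h$ is exactly the equivalence relation ``having a common value after cancelling backtracks''. Since reduction preserves endpoints, every $\gamma\in\Omega(\LambdaE,[E_1])$ is $\sim_h$-equivalent to a reduced closed walk at $[E_1]$, so it suffices to show that every reduced closed walk at $[E_1]$ has the asserted form. (When $N=1$ an arrow is only defined up to sign, so a loop $\lambda$ with $\lambda^\vee=\pm\lambda$ equals its own reverse; this matters only in the degenerate rim cases below.)

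By Proposition~\ref{prop:volcano}(1), $\LambdaEundir$ is an $\ell$-volcano for the level function $b_1$, and the third property in the definition of a volcano gives each vertex at level $i>0$ a unique neighbour at level $i-1$. Therefore, for each rim vertex $v_0$, the set of vertices whose descending path terminates at $v_0$, equipped with the non-rim edges among them, is a tree $T_{v_0}$ rooted at $v_0$; the graph is the rim with these trees glued on at the rim vertices, and within each $T_{v_0}$ the only vertex incident to an edge outside $T_{v_0}$ is $v_0$ itself. Let $\omega$ be the unique descending path from $[E_1]$ to the rim (empty if $[E_1]$ is on the rim). I would then prove, by induction on $b_1([E_1])$, that a reduced closed walk $\gamma$ at $[E_1]$ must begin by descending along the first edge of $\omega$ and end by ascending along its reverse: if the first edge of $\gamma$ went up instead, into the subtree $T_c\cup\{[E_1]\}$ hanging off a child $c$ of $[E_1]$, then — $[E_1]$ being the only exit from that subtree — the initial segment of $\gamma$ up to its first return to $[E_1]$ would be a reduced closed walk inside a tree, hence empty, contradicting that it starts with the edge $[E_1]\to c$. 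Here I use the elementary fact that a reduced closed walk in a tree is empty. Stripping the first edge of $\omega$ and its reverse off the two ends of $\gamma$ lowers $b_1([E_1])$ by one and produces no new backtrack, since levels strictly decrease along $\omega$; the induction terminates with $\gamma=\omega^\vee\,c\,\omega$, where $c$ is a reduced closed walk at the rim vertex at the foot of $\omega$.

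It remains to identify $c$. The same tree argument, now applied at a rim vertex, shows that $c$ never leaves the rim. Appealing to Proposition~\ref{prop:volcano}(2): if $\ell$ is inert the rim has no edges and $c$ is empty; if $\ell$ is ramified with $L$ non-principal the rim is a single edge, crossing it and coming back is a backtrack, and $c$ is again empty; otherwise the rim is a cycle of length $a\ge 1$, whose non-endpoint vertices have degree $2$, which forces a reduced closed walk based at a given vertex to keep circulating in a fixed direction, i.e.\ to be a power of $\omega_{rim}^+$ or of $\omega_{rim}^-$ (for $a=1$ one separates the cases $\lambda^\vee\neq\pm\lambda$ and $\lambda^\vee=\pm\lambda$, the latter collapsing $(\omega_{rim})^n$ according to the parity of $n$). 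In every case $c=(\omega_{rim}^{\pm})^n$ for some $n\ge 0$, and combining with the previous step yields $\gamma\sim_h\omega^\vee(\omega_{rim}^{\pm})^n\omega$.

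The conceptual core — the volcano retracts onto its rim, and everything above a rim vertex is a tree — is easy; the friction is entirely in the degenerate rim configurations ($\ell$ inert, $\ell$ ramified with $L$ (non-)principal, cycles of length $1$ or $2$, loops of multiplicity two, and the sign ambiguity of arrows for $N=1$). In each of these one must check that the reduced closed rim walks based at a fixed vertex are precisely the powers $(\omega_{rim}^{\pm})^n$, and that this is compatible with the definitions of $\omega_{rim}^{\pm}$ and $f_{rim}^{\pm}$ recorded just before the statement. That bookkeeping is where I would spend most of the care.
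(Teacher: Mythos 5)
Your argument is correct, and it is exactly the route the paper intends: the paper offers no proof beyond the remark that the corollary follows easily from Proposition~\ref{prop:volcano}, and your reduction-of-walks argument (backtrack cancellation, trees hanging off the rim forcing reduced closed walks to descend along the unique path and circulate on the rim, plus the case check of the degenerate rim configurations) is precisely a careful working-out of that deduction.
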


Consider the map of directed graphs $$\pi_N\colon \LambdaEN \arr \LambdaE$$ which sends a vertex $[(E_1,P_1)]$  to $[E_1]$, and an arrow $[(E_1,P_1)] \overset{\lambda}{\arr} [(E_2,P_2)]$ to $[E_1] \overset{\lambda}{\arr} [E_2]$. 
Note that for any $N$, and any vertex $[\uE_1]\in \LambdaENv$, the arrows starting in $[\uE_1]$ are in natural bijection with subgroups of order $\ell$ of $E_1$ and so $\pi_N$ is a \textit{covering map} of directed graphs. As such, many properties of $\LambdaEN$ can be deduced from the corresponding properties of $\LambdaE$. For example, since the action of the operator $T_\ell$ on a point $[\uE_1]$ can be described by $[\uE_1] \mapsto \sum_{[\uE_1] \arr [\uE_1^\prime]} [\uE_1^\prime]$ (sum over arrows in $\Lambda_\ell^{\rm ord}(N)$), the action of $T_\ell$ on $[\uE_1] = [(E_1, P_1)]$ and $[E_1]$ are easily related.

\section{The Hecke correspondence $T_\ell$ on ordinary elliptic curves}

\subsection{$T_\ell$ in characteristic $0$} For what follows, the reader may consult \cite{CO1} for a general introduction to deformations of abelian varieties and $p$-divisible groups, and \cite{Katz} for the case of ordinary abelian varieties. In addition, the reference \cite{Con} provides a good introduction to rigid-analytic geometry, while the construction of the generic fibre of a formal scheme appears in \cite{Ber}.

\medskip 

\id Recall that $p,\ell$ are distinct primes, and $N$ is an integer coprime to $p\ell$. Let $E$ be an ordinary  elliptic curve defined over $\fpbar$. Assume that $\Aut(E)=\{\pm1\}$. Let $\tilde{D}(E)\cong \Spf(W(\fpbar)[[x]])$ be the deformation space of $E$ defined over $W(\fpbar)$; it pro-represents the functor of deformations of $E$ to local artinian rings $(R,\germ_R)$ with a given isomorphism $R/\germ_R\cong \fpbar$. Let $D(E)=\tilde{D}(E)_{rig}$ denote the associated rigid-analytic space; it is isomorphic to the open unit disc over $W(\fpbar)[1/p]$. For an elliptic curve $\tilde{E}$ lifting $E$, we define $D(\tilde{E})=D(E)$.
 
The same construction can be done for a pair $\uE=(E,P)$, where $E$ is as above, and $P$ is a point of order  $N$ on $E$. Let $\tilde{D}(\uE)$ denote the formal deformation space of $\uE$ over $W(\fpbar)$, and let $D(\uE)$ be the associated rigid-analytic space.  For $\tilde{\uE}$ lifting $\uE$, we set $D(\tilde{\uE})=D(\uE)$.

Since $E[N]$ is an \'etale group scheme over $\fpbar$, the forgetful map provides an isomorphism between the deformation space of $\uE$ and $E$. Hence, the rigid-analytic forgetful map
\[
\pi_N\colon D(\uE) \arr D(E)
\]
is an isomorphism too.

When $N \geq 4$, the moduli of pairs $(\tilde{E},\tilde{P})$ where $\tilde{E}$ is an elliptic curve of good reduction over an $W(\fpbar)[1/p]$-algebra is represented by the (noncuspidal part) of the rigid-analytic  modular curve $X_1(N)$.  Let $X_1(N)_{\fpbar}$ be the corresponding modular curve over $\fpbar$. Let $\spe:X_1(N) \arr X_1(N)_{\fpbar}$ denote the specialization map. Then, for any $\tilde{\uE}=(\tilde{E},\tilde{P})$ of reduction $\uE=(E,P)$, we have $D(\tilde{\uE})=D(\uE)=\spe^{-1}([\uE])$, where $[\uE]$ denotes the isomorphism class of $\uE$ viewed as a point on $X_1(N)_{\fpbar}$.

When $N<4$, there may be points $\uE=(E,P)$ with a nontrivial automorphism group and the moduli space is no longer representable. We could still define  coarse moduli spaces $X_1(N)$, $X_1(N)_{\fpbar}$, and a specialization map $\spe: X_1(N) \arr X_1(N)_{\fpbar}$. In this case, we have $\spe^{-1}([\uE])=D(\uE)/\Aut(\uE)$.

We are interested in the $p$-adic dynamics of $T_\ell$ on the ordinary good reduction part of $X_1(N)$, or, to be more precise, on the set of $\CC_p$-points of $\sqcup_{\uE}\ D(\uE)$,
where $\uE$ runs over the isomorphism classes of ordinary elliptic curves over $\fpbar$ equipped with a point of order  $N$. If $n>0$ is an integer, the correspondence $T_\ell^n$ acts on $X_1(N)$ via the formula
\[
T_{\ell}^n([(\tilde{E},\tilde{P})])=\sum_C \mu_C [(\tilde{E}/C,P\ {\rm mod}\ C)],
\]
where $C$ runs over all  subgroups of $\tilde{E}$ of order $\ell^n$,  $[(\tilde{E},\tilde{P})]$ denotes the isomorphism class of $(\tilde{E},\tilde{P})$, and $\mu_C$ are positive multiplicities that can be explicitly calculated.

\subsection{Automorphisms of residue discs}\label{section:automorphisms} 
Let $\alpha(N)$ be the order of $\ell$ in $(\ZZ/N\ZZ)^\times$. Recall our standing assumption $(\star)$.
\begin{lem}\label{lem: branches} Let $\lambda\colon \uE_1=(E_1,P_1) \arr (E_2,P_2)=\uE_2$ be an $\ell^n$-isogeny of elliptic curves over $\fpbar$ sending $P_1$ to $P_2$. There is a unique rigid-analytic isomorphism, 
\[
\lambda^\ast\colon D(\uE_1) \arr D(\uE_2),
\]
such that for all $x=[(\tilde{E}_1,\tilde{P}_1)]  \in D(\uE_1)$, we have $\lambda^\ast(x)=[(\tilde{E}_1/C, \tilde{P}_1\ {\rm mod}\ C)]$, where $C$ is the unique subgroup of $\tilde{E}_1$ lifting $\Ker(\lambda)$. The inverse of $\lambda^\ast$ is given by $(\deg(\lambda)^{\alpha(N)-1}\lambda^\vee)^\ast$. If $\uE_1=\uE_2$ and $\lambda \in \ZZ$ then $\lambda^\ast={\rm id}$. For $\lambda_1\colon\uE_1 \arr \uE_2$, and $\lambda_2\colon\uE_2 \arr \uE_3$ we have $(\lambda_2\circ\lambda_1)^\ast=\lambda_2^\ast\circ\lambda_1^\ast$.
\end{lem}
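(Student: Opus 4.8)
The plan is to construct $\lambda^\ast$ directly from the moduli interpretation and then check the asserted properties one by one, using the universal property of the deformation space $\tilde D(\uE_1)$ to get uniqueness and formal-algebraicity for free. First I would fix a point $x = [(\tilde E_1, \tilde P_1)] \in D(\uE_1)(\overline{\CC}_p)$, equivalently a deformation of $\uE_1$ over the valuation ring of a finite extension; since $\tilde E_1$ is a lift of an ordinary elliptic curve, its $p$-divisible group is an extension of an \'etale by a multiplicative part, and $\Ker(\lambda) \subset E_1$ is an \'etale subgroup (as $\ell \neq p$), so it lifts uniquely to a finite flat subgroup scheme $C \subset \tilde E_1$ — this is where I would cite the standard deformation theory of \'etale subgroups (or simply the fact that \'etale covers lift uniquely along nilpotent thickenings). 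Then $(\tilde E_1/C, \tilde P_1 \bmod C)$ is a deformation of $\uE_2 = (E_1/\Ker\lambda, P_2)$, which gives a point of $D(\uE_2)$; doing this over the universal deformation ring $W(\fpbar)[[x]]$ of $\uE_1$ produces a morphism of formal schemes $\tilde D(\uE_1) \arr \tilde D(\uE_2)$, whence a rigid morphism $\lambda^\ast\colon D(\uE_1) \arr D(\uE_2)$ with the stated pointwise description. Uniqueness of $\lambda^\ast$ is then immediate: a rigid morphism between these one-dimensional discs is determined by its effect on $\overline{\CC}_p$-points (which are Zariski-dense), and the pointwise formula pins it down.

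Next I would establish the composition law $(\lambda_2 \circ \lambda_1)^\ast = \lambda_2^\ast \circ \lambda_1^\ast$. On points this is the statement that forming $\tilde E_1 / C_1$ and then quotienting by the lift of $\Ker(\lambda_2)$ agrees with quotienting $\tilde E_1$ by the lift of $\Ker(\lambda_2 \circ \lambda_1)$; this follows from the uniqueness of lifts of \'etale subgroups together with the exact sequence $0 \to \Ker(\lambda_1) \to \Ker(\lambda_2\circ\lambda_1) \to \Ker(\lambda_2) \to 0$ and the compatibility of quotients, so the two maps agree on a dense set of points and hence as rigid morphisms. The claim that $\lambda^\ast = \mathrm{id}$ when $\uE_1 = \uE_2$ and $\lambda \in \ZZ$ (so $\lambda = \pm\ell^{n/2}$ forced to be, in fact, multiplication by an integer that is an isogeny, i.e. $\ell^{n} = \lambda^2$): here $\Ker(\lambda) = \tilde E_1[\lambda]$ for the multiplication-by-$\lambda$ map is canonically defined on any lift, and $\tilde E_1 / \tilde E_1[\lambda] \cong \tilde E_1$ canonically via $\lambda$ itself, carrying $\tilde P_1$ to $\lambda(\tilde P_1)$; the point of order $N$ matches because $P_1 = P_2$ forces $\lambda$ to act trivially on the $N$-torsion component (this is exactly the condition $\lambda(P_1) = P_2 = P_1$), so $\lambda^\ast$ fixes every point, hence is the identity.

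Finally, the inverse statement: I would show $(\deg(\lambda)^{\alpha(N)-1}\lambda^\vee)^\ast \circ \lambda^\ast = \mathrm{id}_{D(\uE_1)}$ and symmetrically. By the composition law this reduces to identifying $(\deg(\lambda)^{\alpha(N)-1}\lambda^\vee) \circ \lambda$ as an endomorphism of $\uE_1$: it equals multiplication by $\deg(\lambda)^{\alpha(N)} = \ell^{n\alpha(N)}$ on $E_1$, and one checks it fixes $P_1$ — indeed $\lambda^\vee \circ \lambda = [\deg\lambda] = [\ell^n]$ sends $P_1$ to $\ell^n P_1$, and multiplying by the extra factor $\ell^{n(\alpha(N)-1)}$ gives $\ell^{n\alpha(N)} P_1 = P_1$ since $\alpha(N)$ is the order of $\ell$ in $(\ZZ/N\ZZ)^\times$ (this is precisely the symmetry computation already used in defining $\overset{\ell}{\sim}$). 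So the composite is an integer isogeny $\uE_1 \arr \uE_1$ in the sense of the previous paragraph, hence its $\ast$ is the identity; running the argument the other way handles the composition in the opposite order. The main obstacle — though it is more a matter of care than of difficulty — is the bookkeeping around $N \leq 2$ and non-representability: when $\Aut(\uE)$ is nontrivial one works with $D(\uE)$ before passing to the quotient by $\Aut(\uE)$, and the sign ambiguities in isogenies (noted in Section~\ref{section:mod p isogeny graph}) must be tracked; I would phrase everything at the level of $D(\uE)$ itself, where the deformation functor is rigid and the uniqueness-on-points argument applies cleanly, and only remark that the constructions descend compatibly.
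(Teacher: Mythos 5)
Your construction is essentially the paper's own proof: you lift the \'etale kernel $\Ker(\lambda)$ uniquely to each deformation, quotient, identify the result as a deformation of $\uE_2$ via $\lambda$, and obtain the morphism on the formal deformation space before passing to rigid fibres, with uniqueness and the remaining claims (composition, the integer case, and the inverse via $\deg(\lambda)^{\alpha(N)-1}\lambda^\vee\circ\lambda = \ell^{n\alpha(N)}$ fixing $P_1$) deduced exactly as the paper indicates. The only cosmetic difference is that you justify uniqueness by density of points on the reduced disc, while the paper appeals to the uniqueness of the lifted subgroup; both are fine.
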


\begin{proof} We claim there is a unique family of subgroup schemes $\tilde{C}$ of order $\ell^n$ of the universal family of elliptic curves over $D(\uE_1)$ with the following property: its specialization  at every point $(\tilde{E}_1,\tilde{P}_1)\in D(\uE_1)$ is a subgroup of $\tilde{E}_1$ reducing to $\Ker(\lambda)$ in $E_1$.  The morphism $\lambda^\ast$ is then defined via dividing the universal family of elliptic curves on $D(\uE_1)$ by the subgroup $\tilde{C}$. More precisely, let $(\calE_1,\calP_1)$ be a formal deformation of $(E_1,P_1)$ to a local artinian ring $(R,\germ_R)$ with a given isomorphism $R/\germ_R \cong \fpbar$. This data includes an isomorphism $\epsilon: \calE_1 \otimes R/\germ_R  \overset{\sim}{\arr} E_1$ sending $\calP_1$ to $P_1$. Since $(\ell,p)=1$, $\Ker(\lambda)$ is an \'etale subgroup scheme of $E_1$, and hence it lifts uniquely to a subgroup $\calC$ of the deformation $\calE_1$. We define 
\[
\lambda^\ast(\calE_1,\calP_1)=(\calE_1/\calC,\calP_1\ {\rm mod}\ \calC),
\] 
where the right side is viewed as a formal deformation of $(E_2,P_2)$ via the isomorphism
\[
(\calE_1/\calC) \otimes R/\germ_R \overset{\epsilon}{\arr} E_1/\Ker(\lambda) \arr E_2,
\]
where $E_1/\Ker(\lambda) \arr E_2$ is the isomorphism induced by $\lambda$. This defines a morphism of the formal deformation spaces $\tilde\lambda^\ast:\tilde{D}(\uE_1) \arr \tilde{D}(\uE_2)$. The desired map is the rigid-analytic morphism $\lambda^\ast$ associated to $\tilde\lambda^\ast$. The uniqueness follows from the uniqueness of the lift of $\Ker(\lambda)$. The other statements follow easily from the construction and its uniqueness.
\end{proof}

Let $\uE=(E,P)$ be an ordinary elliptic curve over $\fpbar$ along with a point of order  $N$. Let $\End_{\ell^\infty} (\uE)$ denote the monoid of $\ell$-power isogenies of $E$ fixing $P$. 
The above construction gives a monoid homomorphism 
\[
\ast: \End_{\ell^\infty} (\uE)  \arr \Aut(D(\uE)).
\]
sending $\lambda$ to $\lambda^\ast$. 
\begin{prop}
The kernel of $\ast$ is precisely $\ZZ \cap  \End_{\ell^\infty} (\uE)$.
\end{prop}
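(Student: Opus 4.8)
The plan is to show the two inclusions $\ZZ \cap \End_{\ell^\infty}(\uE) \subseteq \Ker(\ast)$ and $\Ker(\ast) \subseteq \ZZ \cap \End_{\ell^\infty}(\uE)$ separately. The first inclusion is immediate from Lemma~\ref{lem: branches}: if $\lambda \in \ZZ$ (and is an $\ell$-power isogeny fixing $P$, i.e. $\lambda$ is a power of $\ell$), then $\lambda^\ast = \mathrm{id}$, so $\lambda \in \Ker(\ast)$. The content is the reverse inclusion, for which I would argue as follows. Suppose $\lambda \in \End_{\ell^\infty}(\uE)$ with $\lambda^\ast = \mathrm{id}$ on $D(\uE)$. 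The key point is to evaluate $\lambda^\ast$ on a single well-chosen point, namely the canonical lift.

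First I would recall (or invoke from Serre--Tate theory, as will be used in Section~\ref{thm: dynamics on the disc}) that $D(\uE) \cong D(E)$ has a distinguished point $t=0$, the \emph{canonical lift} $\uE^{\mathrm{can}} = (E^{\mathrm{can}}, P^{\mathrm{can}})$, characterized by the property that $\End(E^{\mathrm{can}}) \cong \End(E)$; in particular every $\ell$-power isogeny $\lambda$ of $E$ lifts canonically to an $\ell$-power isogeny of $E^{\mathrm{can}}$, and $E^{\mathrm{can}}/C^{\mathrm{can}}$ (where $C^{\mathrm{can}}$ lifts $\Ker(\lambda)$) is again the canonical lift of $E/\Ker(\lambda) = E_2$. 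If $\lambda \in \End_{\ell^\infty}(\uE)$, i.e. $\uE = \uE$ and $\lambda(P)=P$, then $\lambda^\ast$ fixes the canonical point means that the pair $(E^{\mathrm{can}}, P^{\mathrm{can}})$ is isomorphic to $(E^{\mathrm{can}}/C^{\mathrm{can}}, P^{\mathrm{can}} \bmod C^{\mathrm{can}})$ \emph{as a point of $D(\uE)$}, i.e. via an isomorphism reducing to the identity on $(E,P)$ modulo $\germ$. Since $\Aut(\uE) = \{\pm 1\}$ (by the running assumption $\Aut(E)=\{\pm1\}$), and the automorphism must reduce to $+1$, there is in fact a unique isomorphism $\psi\colon E^{\mathrm{can}} \to E^{\mathrm{can}}/C^{\mathrm{can}}$ reducing to the natural identification $E/\Ker(\lambda) \xrightarrow{\sim} E = E$ given by $\lambda$ itself. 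Composing $\psi$ with the projection $E^{\mathrm{can}} \to E^{\mathrm{can}}/C^{\mathrm{can}}$ yields an endomorphism $\tilde\lambda$ of $E^{\mathrm{can}}$ lifting $\lambda$; but because $\End(E^{\mathrm{can}}) \cong \End(E)$ and the lift of an endomorphism is unique, $\tilde\lambda$ is \emph{the} canonical lift of $\lambda$. Here is the crux: the condition $\lambda^\ast = \mathrm{id}$ on the \emph{whole} disc (not just at $t=0$) forces $\tilde\lambda$ to induce the identity on the formal/Serre--Tate deformation parameter. Concretely, using the description of $D(\uE)$ as the Serre--Tate deformation space with parameter $t$ on which $\lambda^\ast$ acts by $t \mapsto (1+t)^{\bar\lambda/\lambda}-1$ (Theorem~\ref{thm: dynamics on the disc}, applied now with $f = \lambda$), the identity $\lambda^\ast = \mathrm{id}$ is equivalent to $\bar\lambda/\lambda = 1$ in $\ZZ_p^\times$, i.e. $\lambda = \bar\lambda$. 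An element of the imaginary quadratic order $\End(E)$ (or of the ring $\End_{\ell^\infty}(\uE)$, which sits inside it) that equals its own complex conjugate lies in $\ZZ$; combined with $\lambda$ being an $\ell$-power isogeny this gives $\lambda \in \ZZ \cap \End_{\ell^\infty}(\uE)$, as desired.

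To keep the argument self-contained and not circular with Theorem~\ref{thm: dynamics on the disc}, an alternative (and probably cleaner) way to finish is: having produced the canonical lift $\tilde\lambda \in \End(E^{\mathrm{can}})$ of $\lambda$, observe that $\lambda^\ast$ is, up to the identification $D(\uE) \cong D(E^{\mathrm{can}})$ centered at the canonical point, literally the map on the deformation space induced by the functoriality of deformations along $\tilde\lambda$ — i.e. $\lambda^\ast$ sends the deformation $\calE$ to $\calE/\tilde{C}$ where $\tilde C$ lifts $\Ker(\tilde\lambda)$. If this map is the identity on $D(E^{\mathrm{can}})$, then in particular $\tilde\lambda$ acts trivially on the tangent space $\mathrm{Lie}$ of the deformation functor at the canonical point, which (via Serre--Tate or via the $\ZZ_p$-module structure on the Tate module) is identified with $\Hom(T_pE, T_pE^\vee) \otimes$ (something) on which $\tilde\lambda$ acts by multiplication by $\bar\lambda/\lambda$; triviality gives $\bar\lambda = \lambda$ and we conclude as before. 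The main obstacle I anticipate is the bookkeeping around the case $\Aut(\uE) \neq \{\pm1\}$ (small $N$, and the excluded $j$-invariants) and making the identification ``$\lambda^\ast$ = deformation-functoriality of $\tilde\lambda$ centered at the canonical lift'' rigorous: one must check that the canonical subgroup lifting $\Ker(\lambda)$ in a \emph{general} deformation $\tilde E$ of $E$ is compatible, under the isogeny $\tilde\lambda$-picture, with the étale-lift construction in Lemma~\ref{lem: branches} — but since $\Ker(\lambda)$ is étale this compatibility is formal. Everything else — that a self-conjugate element of an imaginary quadratic order is rational, that the only automorphism reducing to $+1$ is the identity — is routine.
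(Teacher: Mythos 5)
Your proof is correct in substance, but it takes a genuinely different route from the paper. The paper's argument is purely pointwise and elementary: if $\lambda^\ast=\mathrm{id}$, then \emph{every} lift $\tilde E$ in the disc satisfies $\tilde E/\tilde C\cong\tilde E$ and hence carries an endomorphism of degree $\deg(\lambda)$; picking a lift with $\End(\tilde E)=\ZZ$ (such lifts exist, since only countably many points of the disc have extra endomorphisms) forces that endomorphism to be $\pm\ell^n$, so $\tilde C=\Ker(\ell^n)$, hence $C=E[\ell^n]$ and $\lambda=\pm\ell^n$ by comparing kernels and degrees and using $\Aut(E)=\{\pm1\}$. You instead read off the conclusion from the explicit Serre--Tate description of the action (Theorem~\ref{thm: dynamics on the disc}, or equivalently the tangent-space action at $t=0$): $\lambda^\ast$ is $t\mapsto(1+t)^{\bar\lambda/\lambda}-1$, which is the identity iff $\bar\lambda/\lambda=1$ in $\ZZ_p^\times$, and since $\End(E)\hookrightarrow\End(T_pE)\cong\ZZ_p$ is injective, a self-conjugate element of the imaginary quadratic order lies in $\ZZ$. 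This is valid and not circular (Lemma~\ref{lem:serre-tate} and Theorem~\ref{thm: dynamics on the disc} do not use the proposition), though it forward-references machinery proved later in the paper, and to apply it to an arbitrary $\lambda\in\End_{\ell^\infty}(\uE)$ you should cite Lemma~\ref{lem:serre-tate} directly (it holds for any prime-to-$p$ isogeny) rather than the theorem, which is phrased for closed walks. Your canonical-lift digression is harmless but unnecessary: every endomorphism lifts to $E^{\mathrm{can}}$ regardless, and the actual work is done entirely by the coordinate (or tangent-space) computation. What each approach buys: the paper's proof is short, self-contained at that point of the text, and needs only the existence of non-CM lifts; yours gives finer information (it identifies the derivative $\bar\lambda/\lambda$ of $\lambda^\ast$ at the canonical point, hence detects exactly when $\lambda^\ast$ is trivial) and fits naturally once the Serre--Tate coordinates have been established.
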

\begin{proof} Let $\lambda \in  \End_{\ell^\infty} (\uE)$ such that $\lambda^\ast = 1$. From the construction of $\lambda^\ast$ that means that for any lift $\tilde E$ with its unique subgroup $\tilde C$ lifting $C:= \Ker(\lambda)$, we have $\tilde E/C \cong \tilde E$ and so that $\tilde E$ has an endomorphism of degree $\sharp C$. As most lifts have only $\ZZ$ as endomorphisms, for some lift $\tilde E$ we most have $\tilde C = \Ker(\ell^n)$ and so $\lambda = \pm \ell^n$. The converse proceeds the same way. 
\end{proof}

Using the above construction, we have a homomorphism of monoids (that we still denote $\ast$ hoping no confusion will arise)
\[
\ast: \Omega(\Lambda_\ell(\uE,N),\uE) \arr \Aut(D(\uE)),
\]
sending $\omega$ to $\omega^*:=\tilde{\omega}^*$, where $\tilde{\omega}$ is the isogeny associated to $\omega$. Note that even when the isogeny $\tilde{\omega}$ may only be defined up to sign, $\tilde{\omega}^*$ is uniquely defined.  More generally, if $\omega$ is a walk in $\Lambda(\uE,N)$ of length $d$ from $[\uE_0]$ to $[\uE_d]$, then $\omega^\ast$ can be similarly defined to be an isomorphism
\begin{equation}\label{eqn: omega star}
\omega^\ast:D(\uE_0) \arr D(\uE_d).
\end{equation}
induced by the walk $\omega$. For two walks $\omega_1$, $\omega_2$ such that the endpoint of $\omega_1$ is the starting point of $\omega_2$, we have $(\omega_2 \circ \omega_1)^\ast=\omega_2^\ast \circ \omega_1^\ast$.

\begin{prop}\label{prop:cyclic} The image of $\ast: \Omega(\Lambda_\ell(\uE,N),\uE) \arr \Aut(D(\uE))$ is a cyclic group.
\end{prop}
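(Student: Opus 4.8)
The plan is to reduce the statement to Corollary~\ref{cor:walks} together with the functoriality of $\ast$. First I would invoke the covering map $\pi_N\colon \Lambda_\ell(\uE,N)\arr \Lambda_\ell(E,1)$ and the observation (already in the excerpt) that when $N\leq 2$ the isogeny attached to a walk is defined up to sign, while $\tilde\omega^\ast$ is always well-defined; so it suffices to understand the image of $\ast$ on $\Omega(\Lambda_\ell(\uE,N),\uE)$ in terms of walks on the volcano $\Lambda_\ell(E,1)$. The key structural input is that, by Lemma~\ref{lem: branches}, $\ast$ is a monoid homomorphism sending $\omega_2\circ\omega_1$ to $\omega_2^\ast\circ\omega_1^\ast$, sending dual isogenies to inverses (because $(\deg\lambda^{\alpha(N)-1}\lambda^\vee)^\ast=(\lambda^\ast)^{-1}$), and killing $\ZZ\cap\End_{\ell^\infty}(\uE)$. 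In particular $\ast$ descends to a homomorphism from the group obtained from $\Omega(\Lambda_\ell(\uE,N),\uE)$ by inverting the $\lambda^\vee$'s, and it is insensitive to the homotopy relation $\sim_h$: indeed $(\omega_2\omega^\vee\omega\omega_1)^\ast=\omega_2^\ast\circ(\omega^\ast)^{-1}\circ\omega^\ast\circ\omega_1^\ast=(\omega_2\omega_1)^\ast$.

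Next I would push the homotopy normal form down from level $1$. Lift a closed walk $\omega$ at $[\uE]$ to the volcano; by Corollary~\ref{cor:walks} its image $\pi_N(\omega)$ is $\sim_h$-equivalent to $\gamma^\vee(\omega_{rim}^{\pm})^n\gamma$ for a path $\gamma$ from $[E]$ to the rim and some $n\in\ZZ$ (allowing negative $n$ by also using $\omega_{rim}^\mp$). Since the homotopy relation is generated by insertions of $\omega^\vee\omega$, and $\ast$ is invariant under such insertions, $\omega^\ast=(\tilde\gamma^\ast)^{-1}\circ\big((\widetilde{\omega_{rim}^{+}})^\ast\big)^{n}\circ\tilde\gamma^\ast$, where now $\gamma$ is lifted to a path in $\Lambda_\ell(\uE,N)$ starting at $[\uE]$ and ending at some vertex $[\uE']$ on the rim over the rim of level $1$. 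One must be slightly careful: a path $\gamma$ on the volcano need not lift to a \emph{closed} configuration, but $\tilde\gamma^\ast\colon D(\uE)\arr D(\uE')$ is an isomorphism, and conjugating the single automorphism $(\widetilde{\omega_{rim}^{+}})^\ast$ of $D(\uE')$ by this fixed isomorphism produces automorphisms of $D(\uE)$. The upshot: the image of $\ast$ is generated, as a group, by the conjugate $(\tilde\gamma^\ast)^{-1}\circ g\circ\tilde\gamma^\ast$ of a \emph{single} element $g=(\widetilde{\omega_{rim}^{+}})^\ast$ — provided I can show the answer is independent of which path $\gamma$ to the rim is used. That independence is itself a consequence of the $\sim_h$-invariance of $\ast$: two such paths $\gamma_1,\gamma_2$ differ by $\gamma_2=\gamma_1\circ(\text{loop at }[\uE])$ up to homotopy, and the loop contributes a power of $g$, which commutes with $g$; hence all conjugates coincide modulo the subgroup $\langle g\rangle$, and the image is exactly the cyclic group $\langle (\tilde\gamma^\ast)^{-1}\circ g\circ\tilde\gamma^\ast\rangle$ for any fixed choice of $\gamma$.

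I expect the main obstacle to be the bookkeeping at level $N$ versus level $1$: Corollary~\ref{cor:walks} is stated on $\Lambda_\ell(E,1)$, and one must check that the homotopy normal form and, crucially, the uniqueness of the rim cycle $\omega_{rim}^{\pm}$ (Remark~\ref{remark:cycle}) survive pullback along the covering $\pi_N$, i.e.\ that the automorphism $(\widetilde{\omega_{rim}^{+}})^\ast$ attached to a rim walk at level $N$ is well-defined independently of the chosen base vertex on the rim, via the canonical identifications of deformation spaces $D(\uE')\cong D(E')$ coming from étaleness of the $N$-torsion. Once that is in place, the conclusion is essentially formal. An alternative, cleaner packaging would be to note directly that $\ast$ factors through $\Omega(\Lambda_\ell(\uE,N),\uE)\to \pi_1^{\mathrm{top}}$-type quotient which, by the volcano structure, is infinite cyclic (generated by the rim loop) up to the torsion-free quotient — but the path-conjugation argument above is the most self-contained route and the one I would write up.
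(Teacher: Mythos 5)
Your main line is the same as the paper's: reduce to $N=1$ (the image at level $N$ sits inside the image of $\ast\colon \Omega(\Lambda_\ell(E,1),[E])\arr\Aut(D(E))$), observe that $(\omega^\vee)^\ast=(\omega^\ast)^{-1}$ so that $\omega^\ast$ depends only on the $\sim_h$-class of $\omega$, and then invoke Corollary~\ref{cor:walks} to see that the image is generated by a conjugate of $(\omega_{rim}^+)^\ast$. The paper's proof is precisely this and asserts the last step without further comment; you correctly flag the one point it leaves implicit, namely that the conjugate $(\tilde\gamma^\ast)^{-1}\circ(\widetilde{\omega_{rim}^+})^\ast\circ\tilde\gamma^\ast$ does not depend on the chosen path $\gamma$ to the rim.

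Your justification of that independence, however, is circular as written. A loop at $[\uE]$ (or at a rim vertex) is itself a closed walk, so all that Corollary~\ref{cor:walks} gives a priori is that it contributes a conjugate of a power of $g=(\widetilde{\omega_{rim}^+})^\ast$ by some possibly different path; asserting that it ``contributes a power of $g$, which commutes with $g$'' assumes exactly the statement being proved. You also do not treat two paths ending at different rim vertices, where no loop at $[\uE]$ relates them and one must compare the rim automorphisms based at different vertices. The clean fix is commutativity of the CM algebra: for closed walks the associated isogenies lie in $\End(E)$, an order in the imaginary quadratic field $K=\End^0(E)$, the canonical identifications of the $\End^0$'s along $\ell$-power isogenies are independent of the chosen isogeny because $K$ is commutative, and by Remark~\ref{remark:cycle} the rim isogeny corresponds to one and the same element $f_{rim}^+\in R_m\subset K$ at every rim vertex. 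Hence for any two paths $\gamma_1,\gamma_2$ from $[E]$ to the rim one has the identity $\deg(\tilde\gamma_2)\cdot\widetilde{\gamma_1^\vee(\omega_{rim}^+)^n\gamma_1}=\deg(\tilde\gamma_1)\cdot\widetilde{\gamma_2^\vee(\omega_{rim}^+)^n\gamma_2}$ in $\End(E)$ (both sides equal $\deg(\tilde\gamma_1)\deg(\tilde\gamma_2)(f_{rim}^+)^n$ in $K$); applying $\ast$, which is multiplicative and kills integers, gives $(\gamma_1^\vee(\omega_{rim}^+)^n\gamma_1)^\ast=(\gamma_2^\vee(\omega_{rim}^+)^n\gamma_2)^\ast$, so the image is generated by the single automorphism $(\gamma^\vee\,\omega_{rim}^+\,\gamma)^\ast$ for any fixed $\gamma$, hence cyclic. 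With this replacing your path-independence paragraph, the argument is complete and agrees with the paper's.
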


\begin{proof} Note that this image is a subgroup of the image of $\ast: \Omega(\Lambda_\ell(E,1),[E]) \arr \Aut(D(E))$. So it is enough to prove the image of the latter is cyclic. Note that for a walk $\omega_1$ in $\Lambda_\ell(E,1)$, we have $(\omega_1^\vee)^\ast=(\omega_1^\ast)^{-1}$. Consequently $\omega^\ast$ depends only on the class of $\omega$ under the equivalence relation $\sim_h$. Thus, applying Corollary \ref{cor:walks}, it follows that the image of $\ast$ is generated by an automorphism conjugate to $(\omega_{rim}^+)^\ast$.
\end{proof}
Finally, we define
\[
f\colon \Omega(\Lambda_\ell(E,1),[E]) \arr \ZZ_p^\times, \quad
\omega \mapsto f_\omega,
\]
where $f_\omega$ is the image of $\tilde{\omega}$ under  $\End(E) \arr \End(T_pE) \cong \ZZ_p$. When $N\leq 2$, $f_\omega$ is defined only up to sign. We denote by $\overline{f}_\omega$ the image of $\overline{\tilde{\omega}}$ in $\ZZ_p^\times$, where the bar sign denotes complex conjugation in $\End(E)$.

 \subsection{$T_\ell$ via walks} Let $n>0$ be an integer. Let $\Walk(n,[\uE])$ be the set of all walks of length~$n$ in $\Lambda_\ell(\uE,N)$ starting at $\uE$. For $\omega \in  \Walk(n,[\uE])$, let $[\underline{E}_\omega]$ denote its endpoint.
The action of $T_\ell^n$ on the vertices of $\LambdaEN$ is then none other than $T_\ell^n([\uE])=\sum_{\omega \in \Walk(n,[\uE])} [\underline{E}_\omega]$.  

 \begin{prop}\label{prop:hecke factorization}
The correspondence $T_\ell^n$  on the residue disc $D(\uE)$ decomposes as a sum of rigid-analytic functions. More explicitly, 
\[T_\ell^n = \sum_{\omega\in \Walk(n,[\uE])}  \tilde\omega^\ast,\] 
where $\tilde \omega^\ast\colon D(\uE) \arr D(\uE_\omega)$ is the isomorphism defined in~(\ref{eqn: omega star}). 
 \end{prop}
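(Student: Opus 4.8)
The plan is to compare the two correspondences $T_\ell^n$ and $\sum_{\omega} \tilde\omega^\ast$ as rigid analytic correspondences on the residue disc $D(\uE)$, and to check that they agree at every point $x = [(\tilde E, \tilde P)] \in D(\uE)$, with matching multiplicities; since both sides are rigid analytic (a finite sum of rigid analytic morphisms on the right by Lemma~\ref{lem: branches}, and $T_\ell^n$ restricted to $D(\uE)$ on the left), pointwise equality with multiplicity on the $\CC_p$-points suffices. The key observation is that the definition of $T_\ell^n$ in characteristic $0$ runs over subgroups $C \subset \tilde E$ of order $\ell^n$, and each such $C$ is \'etale over $W(\fpbar)[1/p]$ (since $(\ell,p)=1$), hence specializes to a subgroup $\bar C \subset E$ of order $\ell^n$; conversely, by the uniqueness of the \'etale lift used in the proof of Lemma~\ref{lem: branches}, each subgroup $\bar C \subset E$ of order $\ell^n$ lifts uniquely to such a $C$. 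So the subgroups $C$ appearing in $T_\ell^n(x)$ are in canonical bijection with the order-$\ell^n$ subgroups of $E$.

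First I would set up this bijection precisely: given $x = [(\tilde E, \tilde P)] \in D(\uE)$, write $E = \bar{\tilde E}$ (which is isomorphic to the fixed curve underlying $\uE$, via the specialization data), and note that the order-$\ell^n$ subgroups $\bar C \subset E$ are exactly the kernels of the $\ell^n$-isogenies $\lambda$ out of $E$, i.e.\ exactly the data of a length-$n$ walk $\omega \in \Walk(n,[\uE])$ starting at $[\uE] = [\uE]$ together with an identification $\Ker(\tilde\omega) = \bar C$ (here I use that, as noted after the definition of $\pi_N$, the arrows out of a vertex $[\uE_1]$ of $\Lambda_\ell(\uE,N)$ are in natural bijection with the order-$\ell$ subgroups of $E_1$, so composing arrows gives all order-$\ell^n$ subgroups; when $N > 2$ this is a clean bijection, and when $N \leq 2$ the sign ambiguity in $\tilde\omega$ does not affect $\Ker(\tilde\omega)$ nor $\tilde\omega^\ast$, as remarked in the text). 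Then I would check that under this bijection, the summand $[(\tilde E/C, \tilde P \bmod C)]$ of $T_\ell^n(x)$ is precisely $\tilde\omega^\ast(x)$: this is immediate from the defining property of $\tilde\omega^\ast = \lambda_d^\ast \circ \cdots \circ \lambda_1^\ast$ in Lemma~\ref{lem: branches}, namely that $\tilde\omega^\ast(x) = [(\tilde E/C, \tilde P \bmod C)]$ where $C$ is the unique lift of $\Ker(\tilde\omega)$ — and the functoriality $(\lambda_2 \circ \lambda_1)^\ast = \lambda_2^\ast \circ \lambda_1^\ast$ guarantees that the composite of the arrows along $\omega$ has $\ast$-action equal to dividing by the lift of $\Ker(\tilde\omega)$, since the unique lift of a composite \'etale kernel is the composite of the unique lifts.

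The last point to pin down is the multiplicities: $T_\ell^n(x) = \sum_C \mu_C [(\tilde E/C, \tilde P \bmod C)]$ with multiplicities $\mu_C$, and I need $\mu_C = 1$ for every order-$\ell^n$ subgroup $C$ (equivalently, that each walk $\omega$ contributes with multiplicity one on the right, which is how $\sum_{\omega \in \Walk(n,[\uE])} \tilde\omega^\ast$ is written). This is where a little care is needed: the multiplicities $\mu_C$ in the general Hecke formula arise from the standard description of $T_\ell^n$ as a composition of the basic Hecke correspondence $T_\ell$, and the combinatorial fact that the set of order-$\ell^n$ cyclic-versus-non-cyclic subgroups are weighted so that $T_\ell^n$ is the ``full'' $n$-fold iterate; but in the walk formulation on $\Lambda_\ell(\uE,N)$, the action $T_\ell^n([\uE]) = \sum_{\omega \in \Walk(n,[\uE])}[\underline E_\omega]$ (stated just before the proposition) already runs over \emph{all} length-$n$ walks, i.e.\ over all order-$\ell^n$ subgroups counted with the same multiplicities $\mu_C$ — indeed a walk $\omega$ of length $n$ is the same as a flag of subgroups $0 \subsetneq C_1 \subsetneq \cdots \subsetneq C_n = C$ with $[C_{i+1}:C_i] = \ell$, and the number of such flags refining a given $C$ is exactly the multiplicity $\mu_C$ appearing in the iterated-Hecke formula. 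So the bijection ``walks $\leftrightarrow$ flags $\leftrightarrow$ (subgroup, chain)'' matches $\mu_C$ on the nose, and summing $\tilde\omega^\ast$ over $\Walk(n,[\uE])$ reproduces $\sum_C \mu_C$. I expect the main obstacle to be exactly this bookkeeping of multiplicities — making sure that the combinatorial multiplicity $\mu_C$ in the classical definition of $T_\ell^n$ matches the number of walks refining $C$, uniformly and including the low-level ($N \leq 2$, sign ambiguity) cases — whereas the identification of each individual branch with $\tilde\omega^\ast$ is essentially a formal consequence of Lemma~\ref{lem: branches} and the uniqueness of \'etale lifts.
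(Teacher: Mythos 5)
Your proposal is correct and follows essentially the same route as the paper: both rest on Lemma~\ref{lem: branches} together with the unique \'etale lifting of subgroups, which identifies each branch of $T_\ell^n$ at a point of $D(\uE)$ with a map $\tilde\omega^\ast$. The only difference is organizational: the paper argues by induction on $n$ and verifies just the case $n=1$ (where every order-$\ell$ subgroup occurs with multiplicity one), whereas you treat general $n$ directly by matching the multiplicity $\mu_C$ with the number of length-$n$ walks (i.e.\ flags) refining $C$ --- a piece of bookkeeping the paper's induction sidesteps, and which you handle correctly.
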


\begin{proof}
Proceeding by induction on $n$, it suffices to check the case $n = 1$. In this case, for any point $(\tilde E, \tilde P)$ in $D(\uE)$ we have 
\[ T_\ell(\tilde E, \tilde P) = \sum_{C \subset \tilde E} (\tilde E/C, \tilde P {\rm \; mod\;} C),\]
where the summation is over all subgroups $C$ of $\tilde E$ of order $\ell$ (that correspond bijectively under reduction with the subgroups of $\uE$ of order $\ell$). On the other hand, the last sum is precisely $\sum_{\omega \in {\rm Walk}(1, [\uE])}\tilde \omega^\ast(\tilde E, \tilde P)$, by Lemma~\ref{lem: branches}.
\end{proof}

\section{Dynamics of $T_\ell$ on ordinary points}
\label{sec: dynamics on ordinary}

\id \subsection{}Proposition \ref{prop:hecke factorization} shows that the flow of the disc $D(\uE)$ under the iteration of $T_\ell$ is exactly like the flow of  $\uE\in \LambdaENv$ under the iteration of  $T_\ell$. The mass of $T_\ell^n(\uE)$ escapes to infinity. More precisely, we have the following proposition. 
\begin{prop} For $[\uE]\in \LambdaENv$ and a positive integer $n$, define the probability measure \[\delta_n = \delta_{n, [\uE]} = \frac{1}{\sharp {\rm Walk}(n, [\uE])} \sum_{\omega \in  {\rm Walk}(n, [\uE])} \delta_{[\uE_\omega]},\] where $\delta_{[\uE_\omega]}$ is the Dirac measure on $\LambdaENv$ concentrated at $[\uE_\omega]$. Then, for any compactly supported function $f\colon \LambdaENv \arr \CC$, we have $\int f d\delta_n  \arr 0 $ as $n \arr \infty$.
\end{prop}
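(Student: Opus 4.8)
The plan is to interpret $\delta_n$ as the distribution of the endpoint of a uniformly random length-$n$ walk in $\LambdaEN$ starting at $[\uE]$, and to prove that the \emph{level} $b_N$ of this endpoint tends to infinity in probability. Since a compactly supported function on the discrete set $\LambdaENv$ is supported on finitely many vertices, each of finite level, this will force $\int f\,d\delta_n\to 0$, so it suffices to show $\delta_n(\{v\})\to 0$ for each fixed vertex $v$.

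First I would set up the walk. Let $(V_n)_{n\ge 0}$ be the Markov chain on $\LambdaENv$ with $V_0=[\uE]$ which at each step moves along one of the $\ell+1$ outgoing arrows of its current vertex — the arrows out of a vertex $[\uE_1]$ being in bijection with the $\ell+1$ order-$\ell$ subgroups of $E_1$ — chosen uniformly at random. Then $\#\Walk(n,[\uE])=(\ell+1)^n$ and $\delta_n$ is precisely the law of $V_n$, so $\delta_n(\{v\})=\Pr[V_n=v]$. As $\{V_n=v\}\subseteq\{b_N(V_n)=b_N(v)\}$, it is enough to show that $X_n:=b_N(V_n)$ satisfies $\Pr[X_n=j]\to 0$ for every $j\in\NN$.

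Next I would identify the law of the level process $(X_n)$. Since $b_N=b_1\circ\pi_N$ and $\pi_N\colon\LambdaEN\to\LambdaE$ is a covering map of directed graphs, this reduces to the case $N=1$, where $\LambdaEundir$ is an $\ell$-volcano (Proposition~\ref{prop:volcano}): a vertex at level $i\ge 1$ has exactly $\ell$ outgoing arrows raising the level to $i+1$ and exactly one lowering it to $i-1$, while a rim vertex has $d_0:=1+\left(\tfrac{\Delta(E)}{\ell}\right)\in\{0,1,2\}$ arrows staying on the rim and $\ell+1-d_0$ arrows to level $1$. Because these counts depend only on the level and not on the chosen vertex, $(X_n)$ is a Markov chain on $\NN$ — the nearest-neighbour walk with up-probability $\tfrac{\ell}{\ell+1}$ and down-probability $\tfrac{1}{\ell+1}$ at every site $\ge 1$, and with up-probability $\tfrac{\ell+1-d_0}{\ell+1}>0$ and holding probability $\tfrac{d_0}{\ell+1}$ at $0$. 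Checking this ``lumpability'' — that all vertices on a common level of the volcano induce the same distribution of level-changes among their outgoing arrows, including a short case analysis at the rim (inert, ramified, split) — is the one point requiring care, and is exactly what the structure theory of Proposition~\ref{prop:volcano} provides; this is the step I expect to be the main obstacle to a clean write-up.

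Finally, $(X_n)$ has strictly positive drift away from $0$, hence is transient: the probability that $0$ returns to itself equals $\tfrac{d_0}{\ell+1}+\tfrac{\ell+1-d_0}{\ell+1}\cdot\tfrac1\ell$, which is $<1$ for every $\ell\ge 2$ (here $\tfrac1\ell$ is the classical gambler's ruin probability that an up-drift nearest-neighbour walk started at $1$ ever reaches $0$). Thus $X$ visits $0$ only finitely often almost surely, and after its last visit it remains $\ge 1$ and tends to $+\infty$ almost surely by the strong law of large numbers; so $X_n\to\infty$ a.s. By bounded convergence $\Pr[X_n=j]\to 0$ for every $j$, which completes the argument. (One can also avoid probabilistic language altogether and bound $\#\{\omega\in\Walk(n,[\uE]):b_N([\uE_\omega])\le M\}$ directly using the downward branching of the volcano, obtaining exponential decay of order $\big(\tfrac{2\sqrt{\ell}}{\ell+1}\big)^{n}$, at the cost of fussier bookkeeping of the excursions to the rim.)
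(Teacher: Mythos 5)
Your proposal is correct and follows essentially the same route as the paper: the paper's proof is a two-line sketch that reduces to $N=1$ via the covering map $\pi_N$ and then invokes ``standard techniques of random walks'' on the $\ell$-volcano $\Lambda_\ell(E,1)$. Your write-up simply makes those standard techniques explicit (the level process being Markov by the volcano structure of Proposition~\ref{prop:volcano}, positive drift, transience via gambler's ruin), and the details check out.
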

\begin{proof} As $\pi_N$ is a covering map that commutes with $T_\ell$, it is enough to prove the statement for $N=1$. In this case, we may further reduce the question to the $\ell$-volcano $\Lambda_\ell(E, 1)$, where it follows from standard techniques of random walks; see, for example, \cite{Saw, Woe}.
\end{proof}

Therefore, the further study of the dynamics of $T_\ell$ on $D(\uE)$ is reduced to the study of the dynamics of branches of  $T_\ell^n$ that bring the disc back to itself, or, equivalently, the branches corresponding to the closed walks of $\LambdaEN$. Let $\omega$ be a closed walk starting and ending at $[\uE]$.  Let  $\bar{\omega}$ denote its image in $\Lambda_\ell(E,1)$.
It is clear from the definitions that we have a commutative diagram 
\[ \xymatrix{
D(\uE) \ar[d]^{\pi_N}_\cong\ar[r]^{\omega^\ast} & D(\uE) \ar[d]^{\pi_N}_\cong \\
D(E) \ar[r]^{\bar{\omega}^\ast} & D(E)
}\]
This shows that the dynamics of $\omega^\ast$ on $D(\uE)$ is equivalent to the dynamics of $\bar{\omega}^\ast$ on $D(E)$. In view of Proposition \ref{prop:cyclic}, understanding the dynamics of such functions  boils down to understanding the dynamics of a single function on the open unit disc; that of the automorphism of $D(E)$ induced by $\omega_{rim}^+$. In the following, for a closed walk $\omega$ in $\Lambda_\ell(E,1)$, we explicitly calculate $\omega^\ast$ on $D(E)$ in terms of a coordinate on it.

\begin{thm}\label{thm: dynamics on the disc} There is a choice of local parameter $t$ on $D(E)$ such that for any $\omega \in \Omega(\Lambda_\ell(E,1),[E])$, the automorphism $\omega^\ast: D(E) \arr D(E)$ is given by the map
\[
t \mapsto (1+t)^{\overline{f}_\omega/{f_\omega}}-1
\]
where $f_\omega \in \ZZ_p^\times$ is defined in \S\ref{section:automorphisms}. Such a map is an isometry of the disc.
\end{thm}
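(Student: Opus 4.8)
\emph{The parameter and Serre--Tate theory.} The required $t$ is the Serre--Tate coordinate, shifted by $1$, and the whole proof rests on Serre--Tate theory in the form of Katz \cite{Katz}. Since $E$ is ordinary, the deformation functor of $E$ is canonically isomorphic to that of its $p$-divisible group $E[p^\infty]$, which over $\fpbar$ sits in a split exact sequence $0\to\mu_{p^\infty}\to E[p^\infty]\to\QQ_p/\ZZ_p\to 0$. As $\mu_{p^\infty}$ and $\QQ_p/\ZZ_p$ have no nontrivial deformations, a deformation of $E$ over an artinian local $W(\fpbar)$-algebra $R$ with residue field $\fpbar$ is the same as an extension of $\QQ_p/\ZZ_p$ by $\mu_{p^\infty}$ over $R$; such extensions are classified by $1+\germ_R=\widehat{\mathbb{G}}_m(R)$, with the split extension corresponding to the canonical lift. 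This is the Serre--Tate parameter $q(\tilde E)\in 1+\germ_R$, and we set $t:=q-1$. Passing to rigid generic fibres, $t$ identifies $D(E)$ with the open unit disc and puts the canonical lift at $t=0$.

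\emph{Computing $\omega^*$.} Let $\omega$ be a closed walk of length $d$ at $[E]$ and $\tilde\omega\in\End(E)$ the associated endomorphism, of degree $\ell^d$. Since $\ell\neq p$, $\tilde\omega$ acts as an automorphism of $E[p^\infty]$, hence by a scalar in $\ZZ_p^\times$ on each of the two pieces $\mu_{p^\infty}$ and $\QQ_p/\ZZ_p$. On the étale quotient $\QQ_p/\ZZ_p$, whose Tate module is $T_pE$, this scalar is $f_\omega$ by the very definition of $f_\omega$. On the connected part $\mu_{p^\infty}$ it is $\overline{f}_\omega$: the two scalars are the images of $\tilde\omega$ under the two ring homomorphisms $\End^0(E)\hookrightarrow\QQ_p$ attached to the two primes of the imaginary quadratic field $\End^0(E)$ above $p$, and complex conjugation interchanges those two primes. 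Now by Lemma~\ref{lem: branches}, $\omega^*=\tilde\omega^*$ carries a deformation $\tilde E$ to the unique deformation $\tilde E'$ of $E$ admitting a lift of $\tilde\omega$ from $\tilde E$; in the extension picture this lift is a morphism of extensions over the pair (multiplication by $\overline{f}_\omega$ on $\mu_{p^\infty}$, multiplication by $f_\omega$ on $\QQ_p/\ZZ_p$), and it exists exactly when $q(\tilde E)^{\overline{f}_\omega}=q(\tilde E')^{f_\omega}$ in $\widehat{\mathbb{G}}_m(R)$, i.e.\ $q(\tilde E')=q(\tilde E)^{\overline{f}_\omega/f_\omega}$. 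In the coordinate $t=q-1$ this reads $\omega^*\colon t\mapsto(1+t)^{\overline{f}_\omega/f_\omega}-1$. The exponent is a unit because $f_\omega\overline{f}_\omega=\deg\tilde\omega=\ell^d\in\ZZ_p^\times$ forces both $f_\omega,\overline{f}_\omega\in\ZZ_p^\times$; and the sign ambiguity in $f_\omega,\overline{f}_\omega$ present when $N\leq 2$ cancels in the ratio, consistently with $\omega^*$ being well defined. Since this holds in one fixed Serre--Tate coordinate for every $\omega$ simultaneously, $t$ is the desired parameter.

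\emph{Isometry.} For $s\in\ZZ_p^\times$, $\phi(t)=(1+t)^s-1=\sum_{k\geq 1}\binom{s}{k}t^k$ has coefficients $\binom{s}{k}\in\ZZ_p$ and unit linear term $s$, and its compositional inverse $t\mapsto(1+t)^{1/s}-1$ is of the same form. A power series over $\ZZ_p$ is non-expanding on the open unit disc, so $\phi$ and $\phi^{-1}$ are both non-expanding and $\phi$ is an isometry. (Directly: for $|t_1|,|t_2|<1$ the $k\geq 2$ terms of $\phi(t_1)-\phi(t_2)$ have absolute value at most $\max(|t_1|,|t_2|)\,|t_1-t_2|<|t_1-t_2|$, while the $k=1$ term $s(t_1-t_2)$ has absolute value $|t_1-t_2|$; conclude by the ultrametric inequality.)

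\emph{Main obstacle.} The substance of the argument is getting the normalizations in Serre--Tate theory to line up: the orientation of the Serre--Tate parameter, which of the two $\ZZ_p$-factors of $\End(E)\otimes\ZZ_p$ records the Tate module rather than the formal group, and the direction of the map $\tilde\omega$ induces on deformation spaces --- so that the exponent emerges as $\overline{f}_\omega/f_\omega$ and not its reciprocal. Once that dictionary is fixed everything is formal.
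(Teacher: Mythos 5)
Your proof is correct, and it reaches the formula by a route that is recognizably the same Serre--Tate theory as in the paper but packaged differently. The paper deduces the theorem from a general statement (Lemma~\ref{lem:serre-tate}) valid for $g$-dimensional principally polarized ordinary abelian schemes: it constructs the Serre--Tate pairing $\eta_A$ via the maps $\phi_A$, proves its functoriality under prime-to-$p$ isogenies, and then computes the effect of the quotient $A \arr A/C$ together with the identification $\lambda$ on the coordinates $t_{ij}$; the elliptic case $g=1$ gives the exponent $\overline{f}_\omega/f_\omega$. You instead stay in dimension one, describe a deformation by its extension class $q \in \operatorname{Ext}^1_R(\QQ_p/\ZZ_p,\mu_{p^\infty}) \cong \widehat{\GG}_m(R)$, identify the scalars by which $\tilde\omega$ acts on the \'etale and connected parts as the two $p$-adic embeddings of $K=\End^0(E)$ (interchanged by conjugation --- the same fact the paper encodes through the dual isogeny and the Rosati involution, $f^\dagger=\bar f$), characterize $\omega^\ast(\tilde E)$ via Lemma~\ref{lem: branches} as the deformation to which $\tilde\omega$ lifts, and read off the formula from the lifting criterion $q(\tilde E)^{\overline{f}_\omega}=q(\tilde E')^{f_\omega}$, i.e.\ bilinearity of the Ext group. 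That criterion (from \cite{Katz}) is exactly the one the paper itself invokes later, in the proof of Proposition~\ref{prop: qc}, so your argument sits squarely inside the paper's framework; what it buys is a leaner derivation that avoids the matrix computation, while the paper's lemma buys the general $g$-dimensional transformation law and acts directly on coordinates without needing the (easy, but worth stating in one line) observation that any lift of $\tilde\omega$ from $\tilde E$ has kernel the unique \'etale lift of $\Ker(\tilde\omega)$, hence target $\omega^\ast(\tilde E)$. Your isometry argument and the remark that the sign ambiguity cancels in the ratio $\overline{f}_\omega/f_\omega$ are both correct.
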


The theorem follows from the following Lemma.

\begin{lem}\label{lem:serre-tate}
Let $(\Abar,\mu_{\Abar})$ be a principally polarized ordinary abelian scheme of dimension $g$  over a finite field $\kappa$ of characteristic $p$. Fix an isomorphism $T_p\Abar\cong \ZZ_p^g$. Let 
\[
f: \Abar \arr \Abar
\]
 be an isogeny of degree $n$ prime to $p$, and $f^\dagger$ denote the image of $f$ under the Rosati involution induced by $\mu_{\Abar}$.  Let $M(f^{-1})=(f_{ij})_{i,j} \in \GL_g(\ZZ_p)$ (respectively, $M(f^\dagger)=(g_{ij})_{i,j})$ denote the matrix of the automorphism on  $T_p\Abar\cong \ZZ_p^g$ induced by $f^{-1}$ (respectively, $f^\dagger$). Let $f$ denote the automorphism of the deformation space $D$ of $(\Abar,\mu_{\Abar})$ induced by $f:\Abar \arr \Abar$. There is an isomorphism $D \cong \Spf[[t_{ij}]]_{i,j}$  under which the automorphism $f$ takes the shape
\[
\displaystyle{f^*(t_{ij})=\prod_{1 \leq r,s \leq g} (1+t_{rs})^{   f_{ri}g_{sj} }-1 \qquad \qquad 1 \leq i,j \leq g}.
\]
\end{lem}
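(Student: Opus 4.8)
The plan is to deduce the lemma from the Serre--Tate theory of local coordinates on the deformation space of an ordinary abelian variety, following Katz's treatment in \cite{Katz}. First I would recall the Serre--Tate theorem: the formal deformation space $D$ of $(\Abar,\mu_{\Abar})$ carries a canonical structure of formal torus, with a canonical isomorphism
\[
D \;\cong\; \Hom_{\ZZ_p}\!\bigl(T_p\Abar \otimes T_p\Abar^{\vee},\, \Gmhat\bigr),
\]
where the polarization lets us identify $T_p\Abar^\vee$ with $T_p\Abar$, so that after choosing the basis $T_p\Abar\cong\ZZ_p^g$ we get coordinates $q_{ij} = 1 + t_{ij}$ on the formal torus $\Gmhat^{\,g\times g}$, with the canonical lift sitting at $q_{ij}=1$, i.e. $t_{ij}=0$. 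The crucial functoriality statement, also due to Serre--Tate and made explicit by Katz, is that an isogeny (or endomorphism) $f\colon \Abar \arr \Abar$ acts on $D$ through its induced action on the Tate module: if $T_p(f)$ acts on $T_p\Abar$ and the dual isogeny / Rosati-adjoint $f^\dagger$ acts via $T_p(f^\dagger)$, then the induced automorphism $f^*$ of the formal torus $\Hom(T_p\Abar\otimes T_p\Abar,\Gmhat)$ is precomposition with $T_p(f)\otimes T_p(f^\dagger)$ (or, equivalently, with a suitable power of $T_p(f)\otimes T_p(f)$ after accounting for the polarization — this is where the degree-$n$, prime-to-$p$ hypothesis guarantees $T_p(f)\in\GL_g(\ZZ_p)$ so everything is invertible).

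The heart of the computation is then purely formal: I would spell out what "precomposition by $T_p(f^{-1})\otimes T_p(f^\dagger)$" does in coordinates. Writing a homomorphism $\varphi\colon T_p\Abar\otimes T_p\Abar \arr \Gmhat$ in terms of its values $\varphi(e_i\otimes e_j) = q_{ij}$ on the standard basis, the pulled-back homomorphism $\varphi\circ (T_p(f^{-1})\otimes T_p(f^\dagger))$ sends $e_i\otimes e_j$ to $\varphi\bigl(\sum_r f_{ri} e_r \otimes \sum_s g_{sj} e_s\bigr) = \prod_{r,s} \varphi(e_r\otimes e_s)^{f_{ri} g_{sj}} = \prod_{r,s} q_{rs}^{\,f_{ri}g_{sj}}$, because $\Hom(-,\Gmhat)$ turns the bilinear expansion into a product of powers. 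Substituting $q_{ij} = 1 + t_{ij}$ gives exactly
\[
f^*(t_{ij}) = \prod_{1\le r,s\le g}(1+t_{rs})^{f_{ri}g_{sj}} - 1,
\]
which is the claimed formula. I would be careful about two bookkeeping points: (a) that the matrix attached to $f$ on the $\Hom$-side is the transpose/inverse of the matrix of $T_p(f)$ on $T_p\Abar$ — this is precisely why $M(f^{-1}) = (f_{ij})$ rather than $M(f)$ appears, and it is a contravariance artifact of $\Hom(-,\Gmhat)$; and (b) that the Rosati-adjoint $f^\dagger$, not $f^\vee$, is the correct operator in the second slot, because the identification $T_p\Abar^\vee \cong T_p\Abar$ uses the polarization $\mu_{\Abar}$, and $\mu_{\Abar}\circ f^\vee{}^{-1}\circ \mu_{\Abar}^{-1}$ composed appropriately is $f^\dagger$. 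The powers $(1+t_{rs})^{f_{ri}g_{sj}}$ make sense in $W(\kappa)[[t_{rs}]]$ since the exponents lie in $\ZZ_p$ and $1+t_{rs}$ is a $1$-unit in the $(t)$-adic topology; this is the usual binomial-series interpretation.

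The main obstacle, and the only genuinely nontrivial input, is the functoriality assertion in the Serre--Tate formalism — namely that the action of endomorphisms on $D$ is computed by the induced action on Tate modules via the $q$-coordinates, with the polarization intervening exactly as described. I would cite \cite{Katz} for this (the "Serre--Tate coordinates and morphisms" discussion), rather than reprove it. Everything after that is the formal manipulation above plus the identification $t=0 \leftrightarrow$ canonical lift. Finally, to conclude Theorem~\ref{thm: dynamics on the disc} from the lemma one specializes to $g=1$ with $\Abar = E$ and $\mu_{\Abar}$ the principal polarization: then $\dagger$ is complex conjugation on $\End^0(E)$, $M(f^{-1}) = f_\omega^{-1}$ and $M(f^\dagger) = \overline{f}_\omega$ as elements of $\ZZ_p^\times$, and the single formula becomes $t\mapsto (1+t)^{\overline{f}_\omega/f_\omega}-1$, which visibly preserves $|t|$ and hence is an isometry of the open unit disc.
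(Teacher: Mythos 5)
Your proposal is correct and follows essentially the same route as the paper: Serre--Tate $q$-coordinates identifying $D$ with $\Hom_{\ZZ_p}(T_p\Abar\otimes T_p\Abar^\vee,\Gmhat)$, equivariance of these coordinates under the prime-to-$p$ isogeny $f$ (with $f^{-1}$ in the first slot and $f^\vee$, i.e.\ $f^\dagger$ after the polarization identification, in the second), and then the same formal computation turning the bilinear expansion into the product $\prod_{r,s}(1+t_{rs})^{f_{ri}g_{sj}}$. The only real difference is that where you cite \cite{Katz} for the equivariance, the paper derives it directly from the construction of the pairing $\eta_A$ (via the maps $\phi_A$ and the factorization $f=\lambda\circ\pi$, with the action on $D$ realized by quotienting by the lifted kernel) -- this derivation, rather than a ``contravariance artifact,'' is what pins down the asymmetric appearance of $f^{-1}$ versus $f^\dagger$, but since your computation uses exactly the correct operators this is a matter of presentation, not a gap.
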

\begin{proof}  Let $\calC_{\kappa}$ denote the category of local Artin rings $(R,\germ_R)$ such that $R/\germ_R = \kappa$. By Serre-Tate's theory of deformation of ordinary abelian varieties, deformations of $\Abar$ over $R$ are in correspondence with the elements of $\Hom_{\ZZ_p}(T_p\overline{A} \otimes_{\ZZ_p} T_p\overline{A}^\vee,\widehat{\GG}_m(R))$, where $\widehat{\GG}_m$ denotes the multiplicative formal group over $R$. We briefly recall (following \cite{Katz}) how a bilinear form $\eta_A\in \Hom_{\ZZ_p}(T_p\Abar \otimes_{\ZZ_p} T_p\Abar^\vee,\widehat{\GG}_m(R))$ is associated to a deformation $A$ of $\Abar$ over $R \in \calC_{\kappa}$. By Serre-Tate's theory of deformation of abelian varieties \cite{Katz} (See also \cite[Appendix]{Messing}), to give a deformation $\Abar$ over $R$ is equivalent to give a deformation $A[p^\infty]$ of $\Abar[p^\infty]$ over $R$. This, in turn, is equivalent to giving an extension
\[
0 \arr \widehat{A}  \arr A[p^\infty] \arr T_p\Abar \otimes_{\ZZ_p} (\QQ_p/\ZZ_p) \arr 0,
\]
where $\widehat{A}$, the formal group of $A$, is isomorphic to the unique toroidal formal group lifting $\widehat{\Abar}$. Every such extension is the push-out of the canonical extension 
\[
0 \arr T_p\Abar \arr T_p\Abar \otimes_{\ZZ_p} \QQ_p \arr T_p\Abar \otimes_{\ZZ_p} (\QQ_p/\ZZ_p) \arr 0
 \]
 by a unique map 
 \[
 \phi_A:  T_p\Abar  \arr \widehat{A}(R)
 \]
 defined as follows. Choose $N$ such that $\germ_R^{N+1}=0$, and note that $\widehat{A}(R)$ is killed by $p^N$, as $p \in \germ_R$. Let $x=(x_n)$ be an element of $T_p\Abar$. Pick any $n \geq N$, and let $\tilde{x}_n$ be an arbitrary lift of $x_n\in \Abar(\overline{\FF}_p)$ to $A(R)$. We define $\phi_A(x)=p^n\tilde{x}_n$ which is easily seen to be independent of the choice of $n \geq N$. The definition is independent of the choice of $\tilde{x}_n$, as any two choices differ by an element on $\widehat{A}(R)$ which is killed by $p^N$. Also, $\phi_A(x)$ lies in $\widehat{A}(R)$ since its reduction mod $\germ_R$ is zero by construction. Finally, we use the Weil pairing to identify $\widehat{A}(R)$ with $\Hom(T_p{\Abar^\vee},\widehat{\GG}_m(R))$ (this identification is valid over $\kappa$, and can be lifted to $R$ since $\widehat{A}$ is of multiplicative type). This re-interprets $\phi_A:  T_p\Abar \arr \widehat{A}(R)$ as a bilinear form $\eta_A \in \Hom_{\ZZ_p}(T_p\Abar \otimes_{\ZZ_p} T_p\Abar^\vee,\widehat{\GG}_m(R))$.
 
  First we claim the following general statement. Let $\delta:A_1 \arr A_2$ be a prime-to-$p$ isogeny of principally polarized ordinary abelian schemes over $R \in \calC_\kappa$, and $\delta:\Abar_1 \arr \Abar_2$ be its reduction. Then the following diagram 
  \begin{equation*}\label{equation: bilinear forms}
   \xymatrix{ 
T_p\Abar_1 \otimes_{\ZZ_p} T_p{\Abar_1^\vee}  \ar[d]^{\delta\otimes (\delta^\vee)^{-1}} \ar[r]^{\qquad \eta_{A_1}} & \widehat{\GG}_m(R)  \ar@{=}[d] 
\\
 T_p\Abar_2 \otimes_{\ZZ_p}  T_p{\Abar_2^\vee}  \ar[r]^{\qquad  \eta_{A_2}} & \widehat{\GG}_m(R) }
 \end{equation*}
 is commutative.  The commutativity follows from the commutativity of the following diagram (which itself follows from the construction):
  \begin{equation*}
\label{equation: phi} \xymatrix{ 
T_p\Abar_1  \ar[r]^{\phi_{A_1}} \ar[d]^{\delta}  & \widehat{A}_1(R) \ar[d]_{\delta}  \ar[r] & \Hom_{\ZZ_p}(T_p\Abar_1^\vee,\widehat{\GG}_m(R)) \ar[d]^{(\delta^\vee)^*}
\\
T_p\Abar_2 \ar[r]^{\phi_{A_2}} & \widehat{A}_2(R) \ar[r]& \Hom_{\ZZ_p}(T_p\Abar_2^\vee,\widehat{\GG}_m(R)) 
}
 \end{equation*}

 Going back to the proof, let $\Cbar$ denote the kernel of  the isogeny $f$, and $\pi: \Abar \arr \Abar/\Cbar$ be the canonical projection. There is a unique isomorphism $\lambda:  \Abar/\Cbar \xrightarrow{\sim} \Abar$ such that  $f=\lambda \circ \pi$. Let $(A,C)$ be a deformation of  $(\Abar,\Cbar)$ to $R \in \calC_{\kappa}$, which implies that $A/C$ is a deformation of $\Abar/\Cbar \cong \Abar$ to $R$. By the above discussion, to deformations $A,A/C$ of $\Abar$, we can associate bilinear forms 
 \[
 \eta_A \in \Hom_{\ZZ_p}(T_p\Abar \otimes_{\ZZ_p} T_p\Abar^\vee,\widehat{\GG}_m(R))
 \] 
 \[
 \eta_{A/C} \in \Hom( T_p(\Abar/\Cbar) \otimes_{\ZZ_p}  T_p((\Abar/\Cbar)^\vee),\widehat{\GG}_m(R)).
 \]
 Identifying $\Abar/\Cbar$ with $\Abar$ via $\lambda$, we obtain $\eta_{A/C}^\prime \in \Hom_{\ZZ_p}(T_p\Abar \otimes_{\ZZ_p} T_p\Abar^\vee,\widehat{\GG}_m(R))$ corresponding to $A/C$ in terms of $\Abar$.   In fact, $\eta_{A/C}^\prime:=\eta_{A/C} \circ \underline{\lambda}^{-1}$, where
 \[
 \underline{\lambda}:  T_p(\Abar/\Cbar) \otimes_{\ZZ_p}  T_p(\Abar/\Cbar)^\vee \arr T_p\Abar \otimes_{\ZZ_p} T_p{\Abar^\vee}  
 \]
 is given by $\lambda \otimes (\lambda^\vee)^{-1}$.  Similarly, define  $\underline{\pi}=\pi \otimes (\pi^\vee)^{-1}$. From the above discussion, it follows that the following diagram is commutative:

 \begin{equation*}
\label{equation: bilinear forms2} \xymatrix{ 
T_p\Abar \otimes_{\ZZ_p} T_p{\Abar^\vee}   \ar[r]^{\qquad \eta_{A/C}^\prime} & \widehat{\GG}_m(R)  \ar@{=}[d] 
\\
 T_p(\Abar/\Cbar) \otimes_{\ZZ_p}  T_p((\Abar/\Cbar)^\vee) \ar[u]^{\underline{\lambda}} \ar[r]^{\qquad   \qquad   \eta_{A/C}} & \widehat{\GG}_m(R)  \ar@{=}[d] 
  \\
T_p\Abar \otimes_{\ZZ_p} T_p{\Abar^\vee} \ar[r]^{\qquad \eta_A} \ar[u]^{\underline{\pi}}  & \widehat{\GG}_m(R) }\end{equation*}

Let $\{v_i\}_{i}$ be the standard basis for $T_p\Abar$ under our fixed isomorphism $T_p\Abar \cong \ZZ_p^g$, and let $\{w_i=\mu_{\Abar}(v_i)\}_i$ be the corresponding basis for $T_p\Abar^\vee$. Let $M(f^{-1})=(f_{ij}) \in \GL_g(\ZZ_p)$ be the matrix of  inverse of the map induced by $f$ on $T_p\Abar$ in this basis. Let $f^\dagger=\mu_{\Abar}^{-1} \bar{f}\mu_{\Abar}$ denote the image of $f$ under the Rosati involution associated to $\mu_{\Abar}$. Let $M^\vee(f^\vee)$ denote the matrix, with respect to $\{w_i\}$, of the map induced map by $f^\vee$ on $T_p\Abar^\vee$. This matrix can be calculated as follows. We have
\[
f^\vee(w_i)=\mu_{\Abar}(\mu_{\Abar}^{-1}f^\vee\mu_{\Abar})(v_i)=\mu_{\Abar}(f^\dagger(v_i)),
\]
which shows that $M^\vee(f^\vee)=M(f^\dagger)=(g_{ij})^{-1}$.  There is an isomorphism $D \cong \Spf[[t_{ij}]]$, where for any deformation $A$ over any $R \in \calC_\kappa$ we have $1+t_{ij}(A)=\eta_A(v_i\otimes w_j)$. To calculate $f^*(t_{ij})$, we use the above diagram to write
\[
\eta_A^\prime(v_i \otimes w_j)=\eta_A(f^{-1}(v_i)\otimes f^\vee(w_j))=\eta_A( \sum_r f_{ri}v_r \otimes \sum_s g_{sj}w_s)=\prod_{r,s}
\eta_A(v_r\otimes w_s)^ {f_{ri}g_{sj}}
 \]
 which proves the statement of the lemma.
 \end{proof}

\subsection{Periodic points: canonical and quasi-canonical lifts} Let $E$ be an ordinary elliptic curve over $\fpbar$. Then $\End(E)$ is isomorphic to  an order of a prime-to-$p$ conductor $\gerf(E)$ in a quadratic imaginary field $K=\QQ(\sqrt{d})$, $d$ a square-free negative integer. There is a unique (up to isomorphism)  elliptic curve $\tilde{E}_{can}$ over $W(\fpbar)$ lifting $E$ such that the natural reduction map  
\[
\End(\tilde{E}_{can}) \arr \End(E)
\]
 is an isomorphism. This elliptic curve is called the \textit{canonical lift} of $E$. A lift $\tilde{E}$ of $E/\fpbar$ is  \textit{quasi-canonical} if the image of the reduction map $\End(\tilde{E}) \arr \End(E)$ is an order in $\End(E)$; its conductor is necessarily of the form $p^a\gerf(E)$ for some $a \geq 0$. The case $a=0$ corresponds to the canonical lift.

Let $\uE=(E,P)$ consist of an ordinary elliptic curve $E$ over $\fpbar$ and a point $P$ of order  $N$ on~$E$. We say $\tilde{\uE}=(\tilde{E},\tilde{P})$ is a (quasi-) canonical lift of $\uE$ if $\tilde{E}$ is a (quasi-) canonical lift of $E$. Since $(p,N)=1$, the point $P$ can be uniquely lifted, and hence there is a unique canonical lift of $\uE$ which we denote by~$\tilde{\uE}_{can}$.   We write $\tilde{D}(\uE)\cong \Spf(W(\fpbar)[[t]])$, where $t$ is the Serre-Tate coordinate on $\tilde{D}(\uE)$. We prove the following.

\begin{prop}\label{prop: qc} Let notation be as above.

\begin{enumerate}
\item We have $t(\tilde{\uE}_{can})=0$.
\item $\tilde{\uE}$ is a quasi-canonical lift of $\uE$ of conductor $p^a\gerf(E)$ if and only if  $1+t(\tilde{\uE})$ is a primitive $p^a$-th root of unity.
\end{enumerate}
In other words, $D(\uE)$ is isomorphic to the open unit disc around $1$, the canonical lift corresponds to $1$, and the other quasi-canonical lifts correspond to the nontrivial $p$-power roots of unity.
\end{prop}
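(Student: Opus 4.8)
The plan is to compute the Serre--Tate coordinate of a quasi-canonical lift directly from the characterization of such lifts as the ones admitting ``extra'' endomorphisms, using Lemma~\ref{lem:serre-tate} in the one-dimensional case $g=1$.

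First I would set up notation. Write $\tilde D(\uE)\cong\Spf W(\fpbar)[[t]]$ where, as in Lemma~\ref{lem:serre-tate} with $g=1$, the coordinate $t$ is normalized so that $1+t(A)=\eta_A(v\otimes w)$ for $v$ a generator of $T_pE$ and $w=\mu_E(v)$. For part (1): the canonical lift $\tilde{\uE}_{can}$ has the property that every endomorphism of $E$ lifts, so in particular it is fixed by the automorphism $f^\ast$ of $D(\uE)$ induced by any $f\in\End(E)$ prime to $p$ (choose, say, $f$ corresponding to a prime ideal $L\mid\ell$ so that $f\in\ZZ_p^\times$ acts nontrivially; since $\End^0(E)\ne\QQ(i),\QQ(\omega)$ one may also simply take a suitable integer-plus-imaginary generator). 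By Lemma~\ref{lem:serre-tate} (equivalently Theorem~\ref{thm: dynamics on the disc}), $f^\ast$ sends $t\mapsto (1+t)^{\bar f/f}-1$, and since $E$ is not CM by $\QQ(i)$ or $\QQ(\omega)$ we have $\bar f/f\neq\pm1$ for an appropriate choice of $f$; the only fixed point of $t\mapsto(1+t)^u-1$ with $u\in\ZZ_p^\times$, $u\ne1$, in the open unit disc is $t=0$. Hence $t(\tilde{\uE}_{can})=0$. (The point $P$ plays no role here since it lifts uniquely along $(p,N)=1$.)

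For part (2): let $\tilde{\uE}$ be a lift with $\End(\tilde E)\arr\End(E)$ having image the order $R_{p^a\gerf(E)}$. Equivalently, $\tilde E$ admits an endomorphism $\gamma$ reducing to a generator of $\End(E)$ over $R_{p^a\gerf(E)}$ but no endomorphism reducing to an element whose conductor at $p$ is smaller; the cleanest formulation is: the group of $f\in\End(E)^\times_{\mathrm{prime\ to\ }p}$ whose induced automorphism $f^\ast$ fixes $\tilde{\uE}$ corresponds to scaling by $\bar f/f$ that fixes the point $1+t(\tilde{\uE})\in\widehat{\GG}_m$. Concretely, $f$ lifts to $\tilde E$ iff $(1+t(\tilde{\uE}))^{\bar f/f}=1+t(\tilde{\uE})$, i.e.\ $(1+t(\tilde{\uE}))^{\bar f/f-1}=1$. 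As $f$ ranges over $\End(E)$ prime to $p$, the ratios $\bar f/f$ fill out (the prime-to-$p$ part of) $1+\gerf(E)\calO_{K,p}\subset\ZZ_p^\times$ under the two embeddings $K\injects\QQ_p$, and more precisely the set of $\bar f/f-1$ that can occur generates an ideal of $\ZZ_p$ of a determined valuation. Requiring $(1+t)^{\bar f/f-1}=1$ for all such $f$ forces $1+t(\tilde{\uE})$ to be a root of unity of $p$-power order, and matching the conductor $p^a\gerf(E)$ against the ramification/level bookkeeping (the valuation of $\bar f/f-1$ versus $\ord_p$ of the order of the root of unity) pins it down to: $1+t(\tilde{\uE})$ is a primitive $p^a$-th root of unity. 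Conversely, if $1+t(\tilde{\uE})$ is a primitive $p^a$-th root of unity, the same computation shows exactly the $f$ with $p^a\mid(\bar f/f-1)$-valuation lift, which is precisely the order $R_{p^a\gerf(E)}$.

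The main obstacle I anticipate is the bookkeeping in part (2): correctly identifying, under the two non-conjugate embeddings $K\injects\QQ_p$ induced by the splitting $p=\gerp\bar\gerp$ (using ordinariness), the exact set of values $\{\bar f/f:f\in R_{\gerf(E)}^\times\text{ prime to }p\}$ and its image under $u\mapsto v_p(u-1)$, and then showing this matches the statement ``primitive $p^a$-th root of unity'' on the nose rather than up to a bounded error. This is a local computation in $\ZZ_p^\times$: one shows $\{\bar f/f\}$ is dense in (an open subgroup of) the norm-one subgroup and uses that $v_p(\zeta-1)$ for $\zeta$ a primitive $p^a$-th root of unity is $1/(p^{a-1}(p-1))$ to read off $a$. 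Once this correspondence is set up the rest is formal, and the final ``in other words'' sentence is then just a restatement: $D(\uE)$ is the open disc of radius $1$ about the point $1$ in $\widehat{\GG}_m$, with $\tilde{\uE}_{can}\leftrightarrow 1$ and quasi-canonical lifts $\leftrightarrow$ nontrivial $p$-power roots of unity.
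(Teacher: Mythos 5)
There is a genuine gap, and it comes from the tool you chose. You derive the lifting condition for an endomorphism $f$ from the fixed-point equation of the disc automorphism $f^\ast$, which (via Lemma~\ref{lem: branches} and Lemma~\ref{lem:serre-tate}) only exists for isogenies of degree \emph{prime to} $p$. The paper instead quotes Katz's criterion directly: a homomorphism $\delta$ lifts to $\tilde E$ if and only if $\eta_{\tilde E}(\delta v,w)=\eta_{\tilde E}(v,\delta^\vee w)$ for all $v,w$, and this is applied to the element $\alpha=p^a\gerf(E)\gamma$ itself (which is not prime to $p$ when $a>0$), giving at once that $\tilde\uE$ has conductor dividing $p^a\gerf(E)$ iff $(1+t)^{\alpha-\bar\alpha}=1$; the whole proposition then reduces to the single computation $\ord_p(\alpha-\bar\alpha)=a$, using that $p$ splits in $K$. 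Your restriction to prime-to-$p$ endomorphisms causes two concrete problems. First, in part (1) the claim that $t\mapsto(1+t)^u-1$ with $u\neq 1$ has $t=0$ as its only fixed point is false: the fixed points are the $t$ with $1+t$ a $p^{\ord_p(u-1)}$-th root of unity, so you need $\ord_p(\bar f/f-1)=0$, not merely $\bar f/f\neq\pm 1$, and you never verify such an $f$ of prime-to-$p$ degree exists. Second, at $p=2$ it does \emph{not} exist, and the route collapses: since $2$ splits in $K$ we have $d\equiv 1\pmod 8$, and writing $f=x+y\gerf(E)\gamma$ one finds $\deg f$ odd forces $y$ even, so every prime-to-$2$ endomorphism of $E$ already lies in $\ZZ+2\gerf(E)\calO_K$ and satisfies $\ord_2(\bar f-f)\geq 1$. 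Hence the set of liftable prime-to-$p$ endomorphisms cannot distinguish the canonical lift ($t=0$) from the quasi-canonical lifts of conductor $2\gerf(E)$ ($1+t=-1$), so neither part (1) nor the $a\in\{0,1\}$ case of part (2) can be proved this way when $p=2$.

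For odd $p$ your plan can be pushed through, but the ``bookkeeping'' you defer is exactly the content of the proof: you must show that $\min\{\ord_p(\bar f-f)\colon f\in\ZZ+p^b\gerf(E)\calO_K,\ \deg f \text{ prime to } p\}=b$ (e.g.\ via $f=1+p^b\gerf(E)\gamma$ for $b\geq 1$, and a separate choice for $b=0$), and in the converse direction you must argue that the image of $\End(\tilde E)\arr\End(E)$, which is an order $\ZZ+p^b\gerf(E)\calO_K$, is pinned down by its prime-to-$p$ elements -- true for odd $p$, false for $p=2$ as above. I would recommend replacing the fixed-point mechanism by the lifting criterion for arbitrary $\delta$ (as in the paper's proof and its proof of the later proposition on periodic points); then parts (1) and (2) follow uniformly in $a$ and in $p$, with the $p=2$ case handled by the observation that splitness forces $d\equiv 1\pmod 4$ so that $\sqrt d$ is a $2$-adic unit.
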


\begin{proof}  We will use the Serre-Tate theory of local moduli of ordinary abelian varieties. We refer to the proof of Lemma \ref{lem:serre-tate} for a summary of their construction and for notation. First we recall the following fact (see \cite{Katz}). Let $A_1$ and $A_2$ be abelian varieties over $\fpbar$, and $\tilde{A}_1,\tilde{A}_2$, respective lifts corresponding to pairings $\eta_{\tilde{A}_1},\eta_{\tilde{A}_2}$. Let $\delta\colon A_1 \arr A_2$ be a morphism. Then $\delta$ lifts to a morphism $\tilde{\delta}\colon \tilde{A}_1 \arr \tilde{A}_2$ if and only if  for all $v \in T_p A_1, w \in T_pA^\vee_2$ we have 
\[
\eta_{\tilde{A}_1}(\delta(v),w)=\eta_{\tilde{A}_2}(v,\delta^\vee(w)).
\] 
We will apply this with $A_1=A_2=E$. Consider $E$ with its canonical polarization and hence a canonical isomorphism $T_pE \overset{\sim}\arr T_pE^\vee$. Under this identification, for any $\delta\in \End(E)$, the dual isogeny $\delta^\vee\!\! \in \End(E^\vee)$ is identified with $\bar{\delta} \in \End(E)$. Let $v$ be a basis element in $T_p E$. This provides us with the Serre-Tate  isomorphism 
$$\tilde{D}(\uE)\cong \tilde{D}(E)\cong \Spf(W(\fpbar)[[t]]),$$ 
where for any lift $(\tilde{\uE})$ of $(\uE)$, we have $1+t(\tilde{\uE})=\eta_{\tilde{E}}(v,v)$.  

In our notation, $\End^0(E) = K = \QQ(\sqrt{d})$, $d$ square-free. If $d \equiv 2, 3 \pmod{4}$ let $\gamma = \sqrt{d}$, and if $d \equiv 1 \pmod{4}$ let $\gamma = (1+\sqrt{d})/2$. Thus, $\calO_K=\ZZ+\gamma\ZZ$.
The lift $\tilde{\uE}$ is a quasi-canonical lift of $\uE$ of conductor dividing $p^a\gerf(E)$ in $K$ if and only if $\alpha=p^a\gerf(E)\gamma$ can be lifted from $\End(E)$ to $\End(\tilde{E})$. This is equivalent to having $\eta_{\tilde{E}}( \alpha v,v)=\eta_{\tilde{E}}(v,\overline{\alpha}v)=\eta_{\tilde{E}}(\overline{\alpha}v,v)$,
which is equivalent to \[\eta_{\tilde{E}}(v,v)^{\alpha -\overline{\alpha }}=(1+t(\tilde{\uE}))^{\alpha -\overline{\alpha }}=1.\] 
If $d \equiv 2, 3 \pmod{4}$, we have $(1+t(\tilde{\uE}))^{\alpha -\overline{\alpha }} = (1+t(\tilde{\uE}))^{2p^a\gerf(E)\sqrt{d}}$; if $d \equiv 1 \pmod{4}$, we have $(1+t(\tilde{\uE}))^{\alpha -\overline{\alpha }} = (1+t(\tilde{\uE}))^{p^a\gerf(E)\sqrt{d}}$.

If $p \neq 2$, taking into account that $p$ splits in $K$, we find that $2\gerf(E)\sqrt{4d}$, respectively $\gerf(E)\sqrt{d}$, are $p$-adic units, and so $(1+t(\tilde{\uE}))^{\alpha -\overline{\alpha }} = 1$ if and only if $(1+t(\tilde{\uE}))^{p^a} = 1$.
If $p = 2$, then since $p$ splits in $K$, we must have $d \equiv 1 \pmod{4}$ and, again, $(1+t(\tilde{\uE}))^{\alpha -\overline{\alpha }} = 1$ if and only if $(1+t(\tilde{\uE}))^{p^a} = 1$.

This proves the second part of the Proposition. For the first part, simply put  $a=0$, to obtain $1+t(\tilde{E}_{can})=1$.
\end{proof}
 Let $\ord = \ord_p$ denote the $p$-adic valuation on $\bar\QQ_p$, normalized so that $\ord(p) = 1$.
\begin{prop}\label{prop: 1234}   Let $\omega \in \Omega(\Lambda_\ell(\uE,N),[\uE])$ be such that $\omega^\ast\in \Aut(D(\uE))$ is nontrivial. Consider the dynamics of  $\{(\omega^\ast)^n\}_{n\geq 1}$ on $D(\uE)$. The following hold:
\begin{enumerate}
\item Every pre-periodic point in $D(\uE)$ is periodic.
\item Let $m>0$ be an integer. The index $[\calO_K: \ZZ[\tilde{\omega}^m]] < \infty$ unless, possibly, when $K = \QQ(\sqrt{-\ell})$. If $K = \QQ(\sqrt{-\ell})$ then $\tilde{\omega} = \pm (-\ell)^{b/2}$ for some odd positive integer $b$, and $[\calO_K: \ZZ[\tilde{\omega}^m]] = \infty$ if and only if $m$ is even. 
\item Assume that $[\calO_K: \ZZ[\tilde{\omega}^m]] < \infty$, then periodic points in $D(\uE)$ of order dividing $m$  are exactly the  quasi-canonical points of $D(\uE)$ of conductor dividing $[\calO_K:\ZZ[\tilde{\omega}^m]]$. In terms of the Serre-Tate coordinate $t$, these correspond to points $\tilde{\uE}\in D(\uE)$ for which $(1+t(\tilde{\uE}))^{p^a}=1$, where $a=\ord_p([\End(E):\ZZ[\tilde{\omega}^m]])$. 
\item If $[\calO_K: \ZZ[\tilde{\omega}^m]] = \infty$ any point is a periodic point of order dividing $m$.
\end{enumerate}
\end{prop}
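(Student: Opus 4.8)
The plan is to reduce the whole statement to the explicit formula of Theorem~\ref{thm: dynamics on the disc}. Using the isomorphism $\pi_N\colon D(\uE)\xrightarrow{\sim} D(E)$ and the commutative square just before that theorem, it suffices to study $\phi:=\bar{\omega}^{\ast}$ on $D(E)$; in the Serre--Tate coordinate $t$ (equivalently, setting $u:=1+t$, so that $D(E)$ is the open disc $|u-1|<1$), Theorem~\ref{thm: dynamics on the disc} gives $\phi\colon u\mapsto u^{\zeta}$ and hence $\phi^{n}\colon u\mapsto u^{\zeta^{n}}$, where $\zeta:=\overline{f}_\omega/f_\omega$ and $b$ denotes the length of $\omega$. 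Two preliminary observations will be used throughout. First, $\zeta\in\ZZ_p^\times$ is a \emph{unit}: $\tilde\omega$ and $\overline{\tilde\omega}$ have degree $\ell^{b}$ prime to $p$, hence act invertibly on $T_pE$, so $f_\omega,\overline{f}_\omega\in\ZZ_p^\times$. Second, the elementary fact that, for $s\in\ZZ_p\setminus\{0\}$ and $u$ with $|u-1|<1$, one has $u^{s}=1$ if and only if $u^{p^{\ord_p(s)}}=1$, i.e.\ $u$ is a $p^{\ord_p(s)}$-th root of unity (write $s=p^{\ord_p(s)}w$ with $w$ a unit; raising to the $w^{-1}$ power is an automorphism of the disc, and the only solutions of $u^{p^k}=1$ in the open unit disc are the $p^k$-th roots of unity). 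The whole proposition then comes down to relating $\ord_p(\zeta^{m}-1)$ to the arithmetic of $\tilde\omega$.

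I would dispose of parts (1) and (4) first, as they are immediate. For (1): by Lemma~\ref{lem: branches}, $\phi$ is an automorphism of $D(\uE)$, in particular a bijection on points, and for any bijection a pre-periodic point is periodic --- if $\phi^{j}(x)=\phi^{k}(x)$ with $j>k\ge 0$, apply $\phi^{-k}$. For (4): $[\calO_K:\ZZ[\tilde\omega^{m}]]=\infty$ holds exactly when $\tilde\omega^{m}\in\QQ$, in which case $\overline{\tilde\omega}^{\,m}=\overline{\tilde\omega^{m}}=\tilde\omega^{m}$, so $\zeta^{m}=(\overline{\tilde\omega}/\tilde\omega)^{m}=1$ and $\phi^{m}=\mathrm{id}$; thus every point is periodic of order dividing $m$.

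For part (2) the key point is that $[\calO_K:\ZZ[\tilde\omega^{m}]]<\infty$ if and only if $\tilde\omega^{m}\notin\QQ$, i.e.\ if and only if $\tilde\omega/\overline{\tilde\omega}$ is \emph{not} a root of unity in $K$; and since $K=\End^{0}(E)$ is imaginary quadratic with $K\not\cong\QQ(i),\QQ(\omega)$, the only roots of unity in $K$ are $\pm 1$. The case $\tilde\omega/\overline{\tilde\omega}=1$ forces $\tilde\omega\in\QQ$, which is excluded because $\omega^{\ast}$ is nontrivial (the kernel of $\ast\colon\End_{\ell^\infty}(\uE)\to\Aut(D(\uE))$ is $\ZZ\cap\End_{\ell^\infty}(\uE)$; this also shows $b\ge 1$). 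So the only way to have infinite index is $\tilde\omega/\overline{\tilde\omega}=-1$, i.e.\ $\tilde\omega^{2}=-\tilde\omega\overline{\tilde\omega}=-\deg(\tilde\omega)=-\ell^{b}$, which forces $K=\QQ(\sqrt{-\ell^{b}})$; this is $\QQ(i)$ when $b$ is even (excluded) and $\QQ(\sqrt{-\ell})$ when $b$ is odd. Conversely, when $K=\QQ(\sqrt{-\ell})$ the prime $\ell$ ramifies, say $(\ell)=\gerl^{2}$ with $\gerl=(\sqrt{-\ell})$ principal; since $\tilde\omega\overline{\tilde\omega}=\ell^{b}$ and $\gerl^{b}$ is the only $\calO_K$-ideal of norm $\ell^{b}$, necessarily $\tilde\omega\calO_K=\gerl^{b}$, so $\tilde\omega=\pm(\sqrt{-\ell})^{b}=\pm(-\ell)^{b/2}$, with $b$ odd because $\tilde\omega\notin\ZZ$. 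Finally $\tilde\omega^{m}=\pm(-\ell)^{bm/2}$ lies in $\QQ$ exactly when $bm$ is even, i.e.\ (as $b$ is odd) exactly when $m$ is even, which gives the last clause of (2). I expect this bookkeeping --- determining which fields $K$ and which $\tilde\omega$ can occur --- to be the only real obstacle in the proof; the observation that the roots of unity in $K$ are just $\pm 1$ is what makes it short.

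For part (3), assume $[\calO_K:\ZZ[\tilde\omega^{m}]]<\infty$, so $\tilde\omega^{m}\notin\QQ$ and $\zeta^{m}\ne 1$; set $a:=\ord_p(\zeta^{m}-1)$. By the elementary fact above, $\phi^{m}(u)=u$, i.e.\ $u^{\zeta^{m}-1}=1$, holds if and only if $u^{p^{a}}=1$; by Proposition~\ref{prop: qc} these $u$ are precisely the canonical lift together with the quasi-canonical lifts of conductor $p^{a'}\gerf(E)$ for $a'\le a$ --- equivalently, the quasi-canonical points of conductor dividing $[\calO_K:\ZZ[\tilde\omega^{m}]]$, once one checks $\ord_p([\calO_K:\ZZ[\tilde\omega^{m}]])=a=\ord_p([\End(E):\ZZ[\tilde\omega^{m}]])$. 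This last equality is the only computation: from the discriminant relation $\disc(\ZZ[\tilde\omega^{m}])=(\tilde\omega^{m}-\overline{\tilde\omega}^{\,m})^{2}=[\End(E):\ZZ[\tilde\omega^{m}]]^{2}\,\disc(\End(E))$ and the fact that $\disc(\End(E))=\gerf(E)^{2}\Delta(E)$ is prime to $p$ (as $p\nmid\gerf(E)$ and $p$ splits in $K$), one gets $\ord_p([\End(E):\ZZ[\tilde\omega^{m}]])=\ord_p(\tilde\omega^{m}-\overline{\tilde\omega}^{\,m})$; and since $\tilde\omega^{m}$ is a $p$-adic unit this equals $\ord_p(\overline{\tilde\omega}^{\,m}/\tilde\omega^{m}-1)=\ord_p(\zeta^{m}-1)=a$, while $\ord_p([\calO_K:\ZZ[\tilde\omega^{m}]])=\ord_p(\gerf(E))+a=a$ as well. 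Rewriting ``$u^{p^{a}}=1$'' as ``$(1+t(\tilde{\uE}))^{p^{a}}=1$'' then gives the Serre--Tate form of the statement.
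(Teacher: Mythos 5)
Your proposal is correct; all four parts go through. The main divergence from the paper is in part (3): the paper never invokes the explicit formula of Theorem~\ref{thm: dynamics on the disc}. Instead it observes, directly from the construction of $\omega^\ast$, that $(\omega^\ast)^m(\tilde{\uE})=\tilde{\uE}$ if and only if $\tilde{\omega}^m$ lifts to an endomorphism of $\tilde{E}$, which (when $\tilde{\omega}^m\notin\ZZ$) says precisely that $\tilde{\uE}$ is a quasi-canonical lift whose endomorphism ring contains $\ZZ[\tilde{\omega}^m]$, i.e.\ has conductor dividing $[\calO_K:\ZZ[\tilde{\omega}^m]]$; the Serre--Tate description is then quoted from the proof of Proposition~\ref{prop: qc}. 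You instead stay entirely inside the coordinate picture, characterizing the periodic points as the solutions of $u^{\zeta^m-1}=1$ and then supplying the bridge $\ord_p(\zeta^m-1)=\ord_p(\tilde{\omega}^m-\overline{\tilde{\omega}}^{\,m})=\ord_p([\End(E):\ZZ[\tilde{\omega}^m]])$ via the discriminant relation and the fact that $\disc(\End(E))$ is prime to $p$ (ordinarity: $p\nmid \gerf(E)$ and $p$ split in $K$). That valuation identity is the extra cost of your route, and you carry it out correctly; it also makes explicit the bookkeeping between the two phrasings of (3) that the paper leaves implicit. For part (2) the difference is cosmetic: the paper deduces $K=\QQ(\sqrt{-\ell})$ from the discriminant of $K$ dividing $-4\ell^{n}$, while you use that $\tilde{\omega}/\overline{\tilde{\omega}}$ must be the root of unity $-1$ together with uniqueness of the ideal of norm $\ell^{b}$ above the ramified prime; your treatment of the converse is more detailed than the paper's ``the rest of (2) is clear.'' Parts (1) and (4) coincide with the paper's one-line arguments, with (4) phrased through $\zeta^m=1$ rather than ``if $\tilde{\omega}^m$ is an integer then every point is $m$-periodic.''
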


\begin{proof}  Pre-periodic points are periodic simply because $\omega^\ast \in \Aut(D(\uE))$. To prove claim (2) note that $\tilde{\omega}$ is a non-integer integral element of $K$ (thus generating $K$ over $\QQ$) whose norm is $\ell^n$ for some $n>0$. If $[\calO_K: \ZZ[\tilde{\omega}^m]] = \infty$ then $\tilde{\omega}^m = \pm \ell^{mn/2}$. This implies that the discriminant of $K$ divides $-4\ell^n$ and so the discriminant of $K$, which is not $-3$ or $-4$, must be equal to $-\ell$ or $-4\ell$ if~$\ell$ is odd, and $-8$ if $\ell = 2$. Thus, $K =  
\QQ(\sqrt{-\ell})$. The rest of (2) is clear.

Let $\tilde{\uE}=(\tilde{E},\tilde{P}) \in D(\uE)$. It follows from the construction of $\omega^\ast$ that $(\omega^\ast)^m(\tilde{\uE})=\tilde{\uE}$ if and only if $\tilde{\omega}^m\in \End(E)$ can be lifted to an endomorphism of $\tilde{E}$. If $\tilde{\omega}^m$ is not an integer, this is equivalent to $\tilde{\uE}$ being a quasi-canonical lift with endomorphism ring containing $\ZZ[\tilde{\omega}^m]\subset \End(E)$, i.e.,~$\tilde{\uE}$ being a quasi-canonical lift of conductor dividing $[\calO_K:\ZZ[\tilde{\omega}^m]]$. In terms of the Serre-Tate coordinate, we have already seen in the proof of Proposition \ref{prop: qc} that $(1+t(\tilde{\uE}))^{p^a}=1$ if and only if~$\tilde{E}$ is a quasi-canonical lift of conductor dividing $p^a\gerf(E)$. If $\tilde{\omega}^m$ is an integer then every point is $m$-periodic. 
The rest follows immediately.
\end{proof}

\begin{cor}\label{cor: summary of T ell orbit} The $T_\ell$-forward orbit of any CM point $\tilde\uE$ with ordinary reduction and conductor $p^a\mathfrak{f}(\tilde \uE)$ is a discrete set, intersecting each residue disc at a finite set contained in the $p^a$-roots of unity (relative to the Serre-Tate coordinate). The prime-to-$\ell$ part of the conductor of the endomorphism ring in such an orbit is an invariant. 
\end{cor}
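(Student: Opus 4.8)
The plan is to assemble the forward orbit from the two pieces already analyzed: the "escape to infinity" across residue discs, governed by the covering map $\pi_N$ and the volcano structure (Propositions \ref{prop:volcano}, \ref{prop:hecke factorization}), and the "return dynamics" within a single disc, governed by Theorem \ref{thm: dynamics on the disc} and the preceding Proposition. Concretely, let $\tilde\uE$ be a singular modulus with ordinary reduction $\uE = (E,P)$, so that $\End(\tilde\uE) \hookrightarrow \End(E)$ identifies $\End^0(\tilde\uE) = \End^0(E) = K$, a quadratic imaginary field, and the conductor of $\End(\tilde\uE)$ is $p^a\mathfrak{f}(\tilde\uE)$ with $(\mathfrak{f}(\tilde\uE),p)=1$. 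First I would invoke Proposition \ref{prop:hecke factorization}: each application of $T_\ell^n$ decomposes as a sum $\sum_{\omega \in \Walk(n,[\uE])} \tilde\omega^\ast$ of honest rigid-analytic maps $D(\uE) \to D(\uE_\omega)$, so the support of $T_\ell^n(\tilde\uE)$ is contained in $\bigcup_\omega \tilde\omega^\ast(\tilde\uE)$, a finite set of points indexed by walks of length $n$ in $\Lambda_\ell(\uE,N)$ starting at $[\uE]$. Hence the entire forward orbit lies in $\bigsqcup_{[\uE']} D(\uE')$ where $[\uE']$ ranges over vertices reachable from $[\uE]$, i.e. over all of $\Lambda_\ell(\uE,N)$ — but only finitely many of these discs meet any fixed bounded region, since the discs are disjoint and a point can only land in $D(\uE')$ if some walk of the appropriate length reaches $[\uE']$.

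Next, to see discreteness, I would examine the intersection of the orbit with a single fixed disc $D(\uE')$. A point of $T_\ell^n(\tilde\uE)$ lands in $D(\uE')$ precisely when the associated walk $\omega$ goes from $[\uE]$ to $[\uE']$; composing with a fixed path back from $[\uE']$ to $[\uE]$ produces a closed walk at $[\uE]$, and by Proposition \ref{prop:cyclic} the image of $\ast$ on closed walks is a cyclic group, generated (up to conjugacy) by $(\omega_{rim}^+)^\ast$. By Theorem \ref{thm: dynamics on the disc}, in the Serre–Tate coordinate $t$ this generator acts as $t \mapsto (1+t)^{\bar f/f} - 1$ for a unit $f \in \ZZ_p^\times$; so the points of the orbit inside $D(\uE')$ form a subset of a single orbit of this cyclic action together with finitely many transferred copies. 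If $\tilde\omega^m$ is never an integer, the preceding Proposition shows the periodic points are exactly the quasi-canonical lifts of conductor dividing a fixed integer, and in particular $(1+t)$ is a $p^a$-th root of unity for a bounded $a$; since $\tilde\uE$ itself is a singular modulus, $\tilde\omega$ (for a suitable return walk) acts on $\tilde\uE$ with finite orbit, and that orbit consists of quasi-canonical points, hence of points with $(1+t(\cdot))^{p^a}=1$ where $a = \ord_p([\End(E):\ZZ[\tilde\omega^m]])$. This forces $a \le \ord_p([\calO_K : \ZZ[\tilde\omega^m]])$, a finite bound depending only on $\uE$ and the return walk, not on $n$. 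Thus the orbit meets each residue disc in a finite set contained in the $p^a$-th roots of unity. The degenerate case $K = \QQ(\sqrt{-\ell})$ with $\tilde\omega^m$ an integer is handled by part (4) of the preceding Proposition: there $\omega^\ast$ has finite order, so again the intersection with each disc is finite.

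For the invariance of the prime-to-$\ell$ part of the conductor: I would use the structure of the ordinary $\ell$-isogeny graph established in Section \ref{section:mod p isogeny graph}. Every vertex in $\Lambda_\ell(\uE,N)$ corresponds to a curve $E_1$ with $\End(E_1)$ of conductor $\ell^r m$ where, as noted after the definition of $\LambdaEN$, the integer $m$ prime to $p\ell$ is an invariant of the $\overset{\ell}{\sim}$-equivalence class. Since $T_\ell$ moves $[\tilde\uE]$ within its residue disc's graph-neighbourhood (by Proposition \ref{prop:hecke factorization} the reduction of any point in $T_\ell^n(\tilde\uE)$ is a vertex of $\Lambda_\ell(\uE,N)$), the mod-$p$ conductor's prime-to-$\ell$ part is unchanged; and the $p$-part of the conductor of the lift is controlled by the Serre–Tate coordinate as above, while $\ell$-isogenies only change the $\ell$-part of the conductor of the lift as well (an $\ell$-isogeny changes conductors of endomorphism rings only at $\ell$). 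Combining these, the prime-to-$\ell$ part of the conductor $p^a\mathfrak{f}(\tilde\uE)$ — that is, $p^a$ times the prime-to-$\ell$ part of $\mathfrak{f}(\tilde\uE)$ — persists along the orbit. The main obstacle I anticipate is bookkeeping the two sources of conductor change (the characteristic-$p$ conductor $\ell^r m$ versus the $p$-adic conductor $p^a$ of the lift) cleanly enough that "prime-to-$\ell$ part" is unambiguous; this is essentially a matter of carefully tracking which primes each isogeny and each application of $\ast$ can affect, all of which is dictated by Proposition \ref{prop:volcano} and the preceding Proposition, so no genuinely new input is needed.
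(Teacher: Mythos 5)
Your assembly draws on the right ingredients, but the step in your second paragraph that is supposed to deliver the precise containment ``in the $p^a$-roots of unity'' does not, as written, prove the stated claim. The exponent $a$ in the Corollary is $\ord_p$ of the conductor of $\End(\tilde{E})$ itself, so that by Proposition \ref{prop: qc} the coordinate $1+t(\tilde\uE)$ is a primitive $p^a$-th root of unity; your argument instead produces the exponent $\ord_p([\End(E):\ZZ[\tilde{\omega}^m]])$, which only bounds $a$ from above and is in general strictly larger, so it would only give containment in a bigger group of $p$-power roots of unity than asserted. Moreover the way you reach it is circular: you invoke the period $m$ of $\tilde\uE$ under the return map before having established that $\tilde\uE$ is periodic, and the justification you offer (``that orbit consists of quasi-canonical points'') is precisely the assertion at stake; it follows not from the periodic-point Proposition but from the isogeny/conductor fact that you only bring in later.

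The fix is already contained in your last paragraph, and it is the paper's (implicit) argument --- the Corollary is stated without separate proof as a direct consequence of the preceding results. Every point of the $T_\ell$-forward orbit is $\ell$-power isogenous to $\tilde\uE$, hence has CM by an order of the same field $K$ whose conductor differs from $p^a\mathfrak{f}(\tilde\uE)$ only at $\ell$ (and reduction mod $p$ preserves the prime-to-$p$ part of the conductor). In particular the $p$-part of the conductor is exactly $p^a$ along the entire orbit and the prime-to-$\ell$ part is invariant, which is the last assertion of the Corollary; then Proposition \ref{prop: qc}, applied in whichever residue disc an orbit point lies, shows its Serre--Tate coordinate satisfies $(1+t)^{p^a}=1$. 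Hence each residue disc meets the orbit in at most $p^a$ points, and since residue discs are disjoint open sets, the orbit is discrete. The detour through closed walks, Proposition \ref{prop:cyclic} and Theorem \ref{thm: dynamics on the disc} is not needed here (that machinery is what one uses for the finer study of orbit closures of non-CM points), and deleting it removes both of the blemishes above.
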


\subsection{Closure of orbits and CM} 
\begin{prop}\label{prop: 1235} Let $\uE = (E, P)$ be an elliptic curve of ordinary reduction over $\CC_p$. Let $K = \End^0(E)$.
\begin{enumerate}
\item If $K$ is a CM field, the orbit of $\uE$ under $T_\ell$ is a discrete set consisting only of elliptic curves with the same endomorphism algebra $K$. Consequently, any point in the closure of the orbit of $\uE$ has CM by an order whose conductor differs from that of $\End(E)$ only at $\ell$.
\item If $K = \QQ$, no point in the closure of the orbit of $\uE$ under $T_\ell$ has CM.
\end{enumerate}
\end{prop}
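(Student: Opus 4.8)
The plan is to reduce both parts to the explicit description of the ordinary residue discs and of the automorphisms $\omega^{\ast}$ obtained above. For \textbf{part (1)}, assume $K=\End^{0}(E)$ is imaginary quadratic, so that $\uE$ is a singular modulus with good ordinary reduction; Corollary~\ref{cor: summary of T ell orbit} then already gives that the $T_{\ell}$-orbit of $\uE$ is discrete, meets each residue disc in a finite set, and has constant prime-to-$\ell$ conductor. I would next observe that every curve occurring in some $T_{\ell}^{n}(\uE)$ is $\ell^{d}$-isogenous to $E$, and that an $\ell$-power isogeny is invertible away from $\ell$, hence induces an isomorphism on $q$-adic Tate modules for all primes $q\neq\ell$; so every such curve has endomorphism algebra $K$ and endomorphism ring of conductor differing from $\gerf(\End E)$ only at $\ell$, and in particular again has good ordinary reduction. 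Thus the whole orbit lies in the union of the residue discs $D(\uE_{1})$ attached to the vertices $\uE_{1}$ of $\Lambda_{\ell}(\uE,N)$. To upgrade ``discrete'' to ``closed'' --- which gives the stated conclusion about the closure --- I would argue that if $x_{n}\to x$ with the $x_{n}$ pairwise distinct orbit points, then the residue disc of $x$ is open, so all but finitely many $x_{n}$ reduce to $\mathrm{red}(x)$; if $\mathrm{red}(x)$ is a vertex of $\Lambda_{\ell}(\uE,N)$ this contradicts the finiteness just quoted, and if $\mathrm{red}(x)$ is not a vertex (e.g., a cusp or a supersingular point) then no orbit point reduces to it --- a contradiction in either case, so the closure equals the orbit.

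For \textbf{part (2)}, write $\uE=\tilde{\uE}_{0}\in D(\uE_{0})$ with $\uE_{0}$ an ordinary curve over $\fpbar$. Since $E$ has no CM, $\tilde{\uE}_{0}$ is not a quasi-canonical lift, so by Proposition~\ref{prop: qc} the element $\theta:=1+t(\tilde{\uE}_{0})$ in the Serre-Tate coordinate is not a $p$-power root of unity, whence $\log\theta\neq 0$. Suppose, for contradiction, that some $x$ in the closure of the orbit has CM; since CM is an isogeny invariant no orbit point has CM, so $x=\lim x_{n}$ for pairwise distinct orbit points $x_{n}$. As in part (1), all but finitely many $x_{n}$ lie in the residue disc of $x$, and since the orbit has ordinary reduction, $\overline{x}:=\mathrm{red}(x)$ is ordinary; being a CM lift of the ordinary curve $\overline{x}$, the point $x$ is quasi-canonical, so by Proposition~\ref{prop: qc} we have $1+t(x)=\zeta$, a $p$-power root of unity in the Serre-Tate coordinate on $D(\overline{x})$. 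For large $n$ pick, via Proposition~\ref{prop:hecke factorization}, a walk $\omega_{n}$ in $\Lambda_{\ell}(\uE,N)$ from $[\uE_{0}]$ to $[\overline{x}]$ with $x_{n}=\omega_{n}^{\ast}(\tilde{\uE}_{0})$; extending the computation behind Lemma~\ref{lem:serre-tate} and Theorem~\ref{thm: dynamics on the disc} to the $\ell^{d}$-isogeny $\tilde{\omega}_{n}$ between the (possibly distinct) ordinary curves, $\omega_{n}^{\ast}$ has the form $t\mapsto(1+t)^{u_{n}}-1$ with $u_{n}\in\ZZ_{p}^{\times}$ in suitable Serre-Tate coordinates, so $1+t(x_{n})=\theta^{u_{n}}$. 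Then $\theta^{u_{n}}\to\zeta$ forces $\theta^{u_{n}}\zeta^{-1}\to 1$, and applying the $p$-adic logarithm (with $\log\zeta=0$) gives $u_{n}\log\theta\to 0$; but $|u_{n}\log\theta|=|\log\theta|$ is a fixed positive number, a contradiction. Hence no point of the closure has CM.

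The step I expect to be the main obstacle is the computational point in part (2): promoting the formula of Theorem~\ref{thm: dynamics on the disc} from closed walks (endomorphisms of a single curve) to an open walk between two distinct ordinary residue discs, i.e. checking that a prime-to-$p$ isogeny $\overline{E}'\to\overline{E}''$ induces, in appropriate Serre-Tate coordinates on $D(\overline{E}')$ and $D(\overline{E}'')$, a map $t\mapsto(1+t)^{u}-1$ with $u\in\ZZ_{p}^{\times}$. This amounts to rerunning the diagram chase in the proof of Lemma~\ref{lem:serre-tate} for a general prime-to-$p$ isogeny rather than an endomorphism, and keeping track of the two independent coordinate choices; everything else is bookkeeping with the volcano structure (Propositions~\ref{prop:volcano} and~\ref{prop:hecke factorization}) together with elementary $p$-adic analysis.
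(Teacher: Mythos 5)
Your argument is correct, and while part (1) matches the paper (which simply invokes Corollary~\ref{cor: summary of T ell orbit}, with your closedness argument being the routine step left implicit there), your part (2) takes a genuinely different route. The paper argues metrically and without coordinates: it lets $S$ be the finite set of quasi-canonical lifts in $D(\uE)$ whose conductor has the same $p$-part $p^a$ as the putative CM limit point $\uE_0$, notes that ${\rm dist}(\uE,S)$ is a finite set of positive reals, and uses that each branch $f^\ast\colon D(\uE)\arr D(\uE_0)$ is an isometry carrying $S$ onto the full set of such quasi-canonical points of $D(\uE_0)$ (a counting argument), so that ${\rm dist}(f^\ast(\uE),\uE_0)\geq \min{\rm dist}(\uE,S)>0$ uniformly in the branch --- contradicting convergence. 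You instead make the branch maps explicit, $t\mapsto(1+t)^{u_n}-1$ with $u_n\in\ZZ_p^\times$, and apply the $p$-adic logarithm, getting the lower bound from $|u_n\log\theta|=|\log\theta|>0$ (valid since $\ker(\log)$ on $1+\germ_{\CC_p}$ is exactly $\mu_{p^\infty}$ and $\theta\notin\mu_{p^\infty}$ because $E$ has no CM). The step you flagged as the main obstacle is in fact already available in the paper: the ``claim'' inside the proof of Lemma~\ref{lem:serre-tate} is stated for a prime-to-$p$ isogeny $\delta\colon A_1\arr A_2$ between possibly distinct principally polarized ordinary abelian schemes, and for elliptic curves ($T_p$ of rank one) it gives immediately that, after fixing a basis of each $T_p$, the map between the two discs is $t\mapsto(1+t)^{u}-1$ with $u\in\ZZ_p^\times$; one only needs to fix the Serre--Tate coordinate on the target disc $D(\overline{x})$ once and for all so that all branches are compared in the same coordinate. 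What each approach buys: the paper's isometry-plus-counting argument avoids any formula and any choice of coordinates, while yours is more quantitative (it bounds $|t(x_n)-t(x)|$ below in terms of $|\log\theta|$) and localizes the obstruction at the single invariant $\log\theta$, at the cost of the (mild) coordinate bookkeeping for open walks.
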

\begin{proof} Corollary~\ref{cor: summary of T ell orbit} contains (1), since $p$-power roots of unity form a discrete set of the unit disc. Consider then the case $K = \QQ$. Suppose that there is a CM point $\uE_0$ in the closure of the orbit of $\uE$. Let $F = \End^0(E_0)$ and let $p^a$ be the $p$-part of the conductor of $\End(E_0)$ in $F$. 

We claim that $F = \End^0(\overline E)$ where $\overline E$ is the reduction of $E$ modulo the maximal ideal of $\CC_p$. Indeed, since $\uE_0$ is in the closure of the orbit of $\uE$ there is an $\ell$-isogeny, $\uE \arr\uE_1$, where $\uE_1$ is in the residue disc $D(\uE_0)$, and so $F = \End^0(\overline{E}_0) = \End^0(\overline{E}_1) = \End^0(\overline{E})$.

Let $S$ be the set of all CM points that are quasi-canonical lifts of $\overline{E}$, with conductor whose $p$-part is $p^a$. They and $\uE$ belong to the same residue disc $D(\uE)$. Let 
\[{\rm dist}(\uE, S) = \{ {\rm dist}(\uE, s): s\in S\},\] 
where we view here the residue disc as a subset of $\CC_p$ by means of a Serre-Tate coordinate, $\uE$ and $s$ are elements of $\CC_p$, and the distance is the $p$-adic metric. Note that ${\rm dist}(S)$ is a finite set of \textit{positive} real numbers. 

Now, let $f$ be an $\ell$-isogeny $\uE \arr \uE^\prime$ such that $\uE^\prime \in D(\uE_0)$. The induced isomorphism of residue discs $f^\ast\colon D(\uE) \arr D(\uE_0)$ is an \textit{isometry}.\footnote{Any automorphism of the open unit disc defined over a discretely valued field extension of $\QQ_p$ is an isometry.} Thus,  ${\rm dist}(\uE, S) = {\rm dist}(\uE^\prime, f^\ast(S))$. Moreover, by a counting argument, $f^\ast(S)$ consists of \textit{all} quasi-canonical lifts in $D(\uE_0)$ with $p$ part of the conductor being $p^a$. Thus, $\uE_0 \in f^\ast(S)$ and we find that ${\rm dist}(\uE^\prime, \uE_0)$ is bounded from below regardless of $f$. This is a contradiction. 
\end{proof}

\subsection{Dynamics of $t \mapsto (1+t)^\lambda - 1$.}
As we have seen, the action of the Hecke operator $T_\ell$ on points of ordinary reduction reduces, essentially, to the study of an automorphism of the open disc of radius~$1$, which is of the form $t \mapsto \omega^\ast(t) = (1+t)^{\bar f_\omega/f_\omega} - 1$. Therefore, we now direct our attention to automorphisms of the disc of the form 
\[ t \mapsto h(t) := (1+t)^\lambda - 1,\]
where $\lambda \in \ZZ_p^\times$. Recall that
$(1+t)^\lambda=\sum_{n\geq 0} {\lambda \choose n} t^n,$
where ${\lambda \choose n}=\frac{\lambda(\lambda-1)\cdots(\lambda-n+1)}{n!}$.

The nature of the orbit $\{h^n(x): n \in \ZZ\}$ is best understood through a sequence of reductions that while providing a complete picture also convince the reader that formulating a quantitive answer in the general case is too messy. 

Thus, let $F$ be a finite extension of $\QQ_p$ with a residue field of degree $p^f$ and ramification index~$e$. Let $\germ_F$ be the maximal ideal of $F$. Define for $n\geq 1$,
\[ U_n = 1 + \germ_F^n \subset \calO_F^\times.\]
We have a decomposition:
\[ \calO_F^\times = \mu_{p^f - 1} \times U_1.\]
We change coordinate so that the centre is $1$, and so our map in the new coordinate (still called~$t$) is simply 
\[ t \mapsto h(t) = t^\lambda.\]
Let $x\in \calO_F^\times$; since $x$ should reduce to $1$, we take $x\in U_1$.  The group $U_1$ has a non-canonical decomposition
\[ U_1 = \mu_{p^a} \times U^\prime, \]
where $U^\prime$ is isomorphic as a $\ZZ_p$-module to $\ZZ_p^{g}$, $g = [F:\QQ_p]$. The action of $h$ respects this decomposition and we see that at the expense of passing from $\lambda$ to $\lambda^{p^a}$ (which will result in decomposing the orbit into a union of finitely many translated-orbits for $\lambda^{p^a}$),
\begin{itemize}
\item \textit{We may reduce to the case $x\in U^\prime$.}
\end{itemize}
Choose an $n_0> \frac{e}{p-1}$ and a sufficiently large $n_1$. The homomorphism
\[ U^\prime \arr U_{n_0}, \qquad x \mapsto x^{p^{n_1}}, \]
is a homeomorphism from $U^\prime$ into an open subgroup of finite index of $U_{n_0}$ that commutes with~$h$. To stress, it is injective. Thus, we accept the next reduction:
\begin{itemize}
\item \textit{We may reduce to the case $x\in U_{n_0}$.}
\end{itemize}
Now, the logarithm function, $\log(t)=\sum_{n\geq 1} (-1)^{n-1}\frac{t^n}{n}$, converges on $U_1$. If $n>\frac{e}{p-1}$, then 
\[
\log: U_n \overset{\sim}\Arr \germ_F^n
\] 
is an isometry whose inverse is given by the exponential function $\exp(t)=\sum_{n \geq 1} \frac{t^n}{n!}$.  That explains our interest in assuming that $x\in U_{n_0}$. Thus, by applying the logarithm, the orbit $\{x^{\lambda^n}: n\in \ZZ\}$ is transformed into the orbit $\{\log(x)\cdot \lambda^n: n\in \ZZ\} = \log(x)\cdot \{ \lambda^n: n\in \ZZ\}$. And so we accept the next reduction. 
\begin{itemize}
\item \textit{The essential information about the orbit of $h(x)$ is contained in the properties of the subset of $\ZZ_p^\times$ given by $ \{ \lambda^n: n\in \ZZ\}$.}
\end{itemize}
This last question is uniform. It doesn't know about $x$ or $F$. Unfortunately, also here we run into similar book-keeping issues, and so once again we satisfy ourselves with a reduction that comes from the decomposition $\ZZ_p^\times  = \mu_{p-1} \times (1+p\ZZ_p)$. 
\begin{itemize}
\item \textit{We may reduce to the case $\lambda\in (1+p\ZZ_p)$.}
\end{itemize}
Finally, by applying the logarithm, we find that
\begin{itemize}
\item \textit{The set $\{\lambda^n: n \in \ZZ\}$ in $1+p\ZZ_p$  is mapped bijectively to the set $\log(\lambda) \cdot \ZZ$ in $p\ZZ_p$, whose closure is a disc in $p\ZZ_p$ of radius $p^{-\ord(\lambda-1)}$, centred at $\log(\lambda)$.}
\end{itemize}

\

\id Without getting bogged down in the details, we can conclude the following:
\begin{prop}\label{prop: 1236}
Assume that $x$ is not a root of unity.
\begin{enumerate} 
\item The closure of the orbit of any point $x$ under $\omega^\ast$ is equal to a finite disjoint union of sets, each homeomorphic to $\ZZ_p$.
\item In contrast to the case of a CM point, where the orbit is countable and discrete, the closure of the orbit of $x$ under iterations of $T_\ell$ is of cardinality $2^{\aleph_0}$. 
\end{enumerate}
\end{prop}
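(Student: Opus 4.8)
The plan is to read off both statements from the chain of reductions carried out just above, together with Proposition~\ref{prop:hecke factorization} and Proposition~\ref{prop:cyclic}. For part (1), note first that by Theorem~\ref{thm: dynamics on the disc}, in the Serre--Tate coordinate the iterate $(\omega^\ast)^n$ is $t\mapsto (1+t)^{\lambda^n}-1$ with $\lambda=\overline{f}_\omega/f_\omega\in\ZZ_p^\times$, so that after recentering at $1$ the orbit of $x$ becomes $\{y^{\lambda^n}:n\in\ZZ\}$ with $y\in U_1=1+\germ_F$, where $F$ is a complete extension of $\QQ_p$ containing $t(x)$. I would then run the reductions above, checking at each stage the one point that matters for closures: every reduction map used there is injective and bicontinuous onto its image, and every ``decomposition'' step only multiplies the number of orbit pieces by a finite factor. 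Concretely, the splitting $U_1=\mu_{p^a}\times U'$ turns the orbit into finitely many translated orbits of an element of $U'$ under a power $\lambda'$ of $\lambda$; the high $p$-power map $U'\hookrightarrow U_{n_0}$ is an injective homeomorphism onto an open subgroup commuting with the dynamics; on $U_{n_0}$ the logarithm is an \emph{isometry} onto $\germ_F^{n_0}$ carrying $\{z^{(\lambda')^n}\}$ to $\log(z)\cdot\{(\lambda')^n:n\in\ZZ\}$; and after the splitting $\ZZ_p^\times=\mu_{p-1}\times(1+p\ZZ_p)$ a further logarithm identifies $\{(\lambda')^n:n\in\ZZ\}$ with $\log(\lambda')\cdot\ZZ\subset p\ZZ_p$. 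Since $x$ is not a root of unity there is nothing to prove unless $\lambda$ has infinite order, and then $\log(\lambda')\neq 0$, so the closure of $\log(\lambda')\cdot\ZZ$ is $\log(\lambda')\cdot\ZZ_p\cong\ZZ_p$. Transporting this back up through the finitely many homeomorphic identifications shows that the closure of the orbit of $x$ is a finite disjoint union of copies of $\ZZ_p$.

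For part (2), I would use Proposition~\ref{prop:hecke factorization} to note that for any closed walk $\omega$ of length $d$ based at $[\uE]$ one has $(\omega^\ast)^n(x)\in\supp(T_\ell^{nd}(x))$ for all $n\ge 0$, so that the $T_\ell$-orbit $\bigcup_{m\ge 0}\supp(T_\ell^m(x))$ contains the forward $\omega^\ast$-orbit of $x$, and (since $\ZZ_{\ge 0}$ is dense in $\ZZ_p$) so do their closures. Choosing $\omega$ so that $\omega^\ast$ generates the cyclic group of Proposition~\ref{prop:cyclic} --- which in the case of interest is infinite, the corresponding $\lambda$ having infinite order --- part (1) exhibits inside the closure of the $T_\ell$-orbit a subset homeomorphic to $\ZZ_p$, hence of cardinality $2^{\aleph_0}$; as this closure lies in $X_1(N)(\CC_p)$, which has cardinality $2^{\aleph_0}$, equality follows. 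The asserted contrast with CM points is exactly Corollary~\ref{cor: summary of T ell orbit}: there the orbit meets each residue disc only in the finite set of $p^a$-th roots of unity and its prime-to-$\ell$ conductor is eventually constant, so the orbit is countable and discrete; for a non-CM $x$, by contrast, the dynamics of $\omega^\ast$ is, after applying logarithms, multiplication by an infinite-order element of $\ZZ_p^\times$, which accumulates densely on a copy of $\ZZ_p$.

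The main difficulty I anticipate is not conceptual but bookkeeping: one must be scrupulous that each step of the reduction chain is a homeomorphism onto its image --- so that it is genuinely the \emph{closure}, and not merely the orbit, that is carried down the chain --- and that the number of translated-orbit pieces stays finite at every stage. One should also flag the degenerate case in which $\lambda$ is a root of unity: then $\omega^\ast$ has finite order, the orbit inside $D(\uE)$ is finite, and statement~(2) is to be understood under the standing hypothesis (automatic in the cases of interest) that the relevant returning automorphism has infinite order. Granting these, both assertions follow directly from Theorem~\ref{thm: dynamics on the disc} and the elementary $p$-adic analysis recalled above.
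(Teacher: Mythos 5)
Your argument is essentially the paper's own: the proposition is presented there as a direct consequence of the preceding reduction chain (the splitting $U_1=\mu_{p^a}\times U'$, the $p$-power embedding into $U_{n_0}$, and the logarithm carrying the orbit to $\log(\lambda)\cdot\ZZ$, whose closure is a copy of $\ZZ_p$), with no further proof supplied, and your part (2) via Proposition~\ref{prop:hecke factorization}, the density of $\ZZ_{\geq 0}$ in $\ZZ_p$, and the contrast from Corollary~\ref{cor: summary of T ell orbit} is exactly the intended reading. Your explicit caveat about the degenerate case where $\lambda=\overline{f}_\omega/f_\omega$ is a root of unity (e.g.\ when $\ell$ is inert in $\End^0(\overline E)$, so the cyclic group of Proposition~\ref{prop:cyclic} is finite) is a hypothesis the paper leaves implicit, and recording it is appropriate rather than a deviation.
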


\

\section{The $\ell$-isogeny graph: the supersingular case}

\subsection{Some notation}\label{subsec: notation for 5} Recall our choice of a fixed prime number $p$. In the following, $E, E_1,$ and so on, will typically denote supersingular elliptic curves in characteristic $p$. We denote ${\rm Frob}_E: E \arr E^{(p)}$ the Frobenius isogeny; occasionally we denote it $\varphi$ for simplicity of notation. We denote $B=B_{p, \infty}$ the quaternion algebra over $\QQ$ ramified precisely at $p$ and $\infty$ and denote~$\scrB$ its $p$-adic completion $B_{p,\infty} \otimes_\QQ \QQ_p$; $\calR$ will denote a maximal order of $B_{p, \infty}$ and $\scrR$ its $p$-adic completion. The order $\calR$ is not unique but $\scrR$ is. Let $q = p^2$ and $\FF_q$ a field of cardinality $q$ in a fixed algebraic closure $\bar \FF_p$. Denote $\QQ_q = W(\FF_q)[p^{-1}]$. 
When we talk about $\overline \QQ_p$ we have in mind an algebraic closure containing $W(\fpbar)$ and $\CC_p$ denotes its completion.

\subsection{Definition of supersingular graphs}
\id As in the ordinary case, in addition to the fixed prime~$p$, we now fix a prime~$\ell$, and a positive integer $N$, such that $p \neq \ell$, $(N, p\ell) = 1$. Although some of the results below apply to the case $\ell = 2$, some of the literature we are using necessitates assuming $\ell$ is odd and so we assume that from this point on. Recall that $\alpha(N)$ denotes the order of $\ell$ in the group $\ZZ/N\ZZ^\times$.
\begin{dfn}\label{dfn: solid rigid} A pair $(N, p)$ is called \textit{rigid} if for every supersingular elliptic curve $E$, and $P\in E[N]$ of order  $N$, we have $\Aut(E, P) = \{ 1\}$. It is called \textit{solid} if $\Aut(E, P) \subseteq \{ \pm 1\}$. If $p$ is clear from the context, we shall simply say $N$ is rigid, or solid. 
\end{dfn}
%
The classification of automorphism groups of elliptic curves (see \cite[III, \S10]{Sil}) implies:

\begin{lem} \label{lem: rigid and solid}If $(N, p)$ is rigid it is solid. Further: 
\begin{enumerate} 
\item If $N =1$, then $(N, p)$ is never rigid and is solid iff $p \equiv 1 \pmod{12}$.
\item If $N = 2$, then $(N, p)$ is never rigid and is solid iff $p \equiv 1 \pmod{4}$.
\item If $N = 3$, then $(N, p)$ is solid iff $p \equiv 1 \pmod{3}$. If it is solid then it is rigid. 
\item If $N>3$, then $(N, p)$ is rigid. 
\end{enumerate}
\end{lem}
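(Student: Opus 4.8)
The plan is to reduce everything to the standard classification of automorphism groups of elliptic curves over $\fpbar$, as recorded in \cite[III, \S10]{Sil}: for an elliptic curve $E$ over $\fpbar$ one has $\Aut(E) = \{\pm 1\}$ unless $j(E) = 0$ or $j(E) = 1728$, in which case $\#\Aut(E)$ is $6$ (if $p \neq 3$), $4$ (if $p \neq 2$), or $12,\ 24$ in the small characteristics $p = 3,\ 2$ respectively. Since $E$ is supersingular, the relevant point is which of $j = 0$ and $j = 1728$ are supersingular $j$-invariants: $j = 1728$ (i.e.\ $y^2 = x^3 + x$) is supersingular iff $p \equiv 3 \pmod 4$, and $j = 0$ (i.e.\ $y^2 = x^3 + 1$) is supersingular iff $p \equiv 2 \pmod 3$. (Both degenerate further when $p = 2, 3$; one handles those two primes by hand.)

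First I would dispose of the implication ``rigid $\Rightarrow$ solid'': this is immediate from the definitions, since $\{1\} \subseteq \{\pm 1\}$. So the content is the ``further'' clause, which I would prove by a direct case analysis on $N$.

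For $N > 3$: the key point is that no supersingular $E$ with extra automorphisms can carry a point $P$ of exact order $N > 3$ fixed by a nontrivial automorphism, because a generator of a cyclic group of order $> 2$ in $\Aut(E)$ acts on $E[N]$ with all eigenvalues being nontrivial roots of unity (more concretely, the nontrivial automorphisms of $E$ when $j = 0$ or $1728$ act on $E$, hence on any point, without nonzero fixed points among points of order $> 2$ — an automorphism $\zeta$ of order $3, 4$ or $6$ fixing $P$ would force $(\zeta - 1)P = 0$ with $\zeta - 1$ an isogeny of degree coprime to $N$ or at least not annihilating a point of order $> 3$). I would make this precise by noting $\Aut(E, P) \leq \Aut(E)$ is a subgroup, and a subgroup of $\Aut(E)$ of order $> 2$ contains an element of order $3, 4$, or $6$; one then checks the only points fixed by such an element have order dividing $2$ or $3$. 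Hence for $N > 3$ no such $P$ exists with $\Aut(E,P) \neq \{1\}$, giving rigidity; the subtlety is just the characteristics $p = 2, 3$, handled separately since there $\Aut(E)$ is larger but the supersingular curve is unique and one inspects it directly.

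For $N = 3$: solidity means ruling out automorphisms of order $3$ or $6$ fixing a point of order $3$; a supersingular $E$ with $3 \mid \#\Aut(E)$ exists iff $j = 0$ is supersingular, i.e.\ iff $p \equiv 2 \pmod 3$. So $(3, p)$ is solid iff $p \not\equiv 2 \pmod 3$, i.e.\ (excluding $p = 3$, where $j = 0 = 1728$ and one checks directly) iff $p \equiv 1 \pmod 3$; and when it is solid, the only possible extra automorphism would be $-1$, which never fixes a point of order $3$, so it is in fact rigid. For $N = 2$: here $-1$ fixes every $2$-torsion point, so $(2, p)$ is never rigid; it is solid iff no supersingular $E$ has an automorphism of order $4$ fixing a $2$-torsion point, which forces $j \neq 1728$ to be the only supersingular curve with $4 \mid \#\Aut$, i.e.\ $1728$ must not be supersingular, i.e.\ $p \not\equiv 3 \pmod 4$, i.e.\ (excluding $p = 2$) $p \equiv 1 \pmod 4$. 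For $N = 1$: $\Aut(E, P) = \Aut(E) \ni -1$ always, so never rigid; solid iff every supersingular $E$ has $\Aut(E) = \{\pm 1\}$, i.e.\ neither $j = 0$ nor $j = 1728$ is supersingular, i.e.\ $p \not\equiv 2 \pmod 3$ and $p \not\equiv 3 \pmod 4$, which by CRT (and checking $p = 2, 3$ are excluded) is exactly $p \equiv 1 \pmod{12}$.

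The main obstacle, such as it is, is bookkeeping rather than depth: one must be careful in characteristics $2$ and $3$, where $j = 0$ and $j = 1728$ coincide and the automorphism group is exceptionally large ($24$ or $12$), so these small primes should be dealt with by an explicit check on the unique supersingular curve rather than by the generic argument; and one must correctly match the ``$j = 0$ supersingular $\Leftrightarrow p \equiv 2 \pmod 3$'' and ``$j = 1728$ supersingular $\Leftrightarrow p \equiv 3 \pmod 4$'' criteria with the stated congruences. Everything else is a routine translation of the $\Aut(E)$ classification.
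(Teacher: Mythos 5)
Your proposal is correct and follows essentially the same route as the paper, which gives no written argument beyond invoking the classification of $\Aut(E)$ in \cite[III, \S10]{Sil} together with the standard criteria for when $j=0$ and $j=1728$ are supersingular; your case analysis is exactly what that citation is meant to encapsulate. The only step worth stating explicitly in your $N>3$ paragraph is that the unique involution $-1$ fixes only $2$-torsion (so $\Aut(E,P)$ cannot even have order $2$, and curves with $\Aut(E)=\{\pm 1\}$ are covered too) --- a fact you already use in your $N=2$ and $N=3$ cases.
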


To define supersingular graphs in a very concrete way, we make use of the following.
\begin{lem} \label{lem: models of elliptic curves over fpsquare} Every supersingular elliptic curve has a model $E$ over $\FF_{p^2}$ such that ${\rm Frob}_E^2 = p$, unique up to $\FF_{p^2}$-isomorphisms.
\end{lem}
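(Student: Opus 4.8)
The plan is to prove the two assertions of Lemma~\ref{lem: models of elliptic curves over fpsquare} — existence of a model over $\FF_{p^2}$ with $\Fr_E^2 = p$, and its uniqueness up to $\FF_{p^2}$-isomorphism — by combining the classical theory of supersingular elliptic curves with descent along the (quadratic) extension $\FF_{p^2}/\FF_p$. First I would recall the standard fact (e.g. from \cite[V]{Sil}) that every supersingular $E$ over $\fpbar$ has a model $E_0$ defined over $\FF_{p^2}$; indeed the $j$-invariant of any supersingular curve lies in $\FF_{p^2}$, and a curve with $j$-invariant in $\FF_{p^2}$ has a model there (being careful, when $j = 0$ or $1728$, to choose the twist appropriately). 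Over $\FF_{p^2}$ the geometric Frobenius $\Fr_{E_0}^2 = \pi$ is an endomorphism of $E_0$ lying in $\ZZ$ — this is because for a supersingular curve the endomorphism ring is a maximal order in $B_{p,\infty}$, the element $\pi$ is purely inseparable of degree $p^2$, commutes with everything (it is central in $\End(E_0)$ since $\Fr$ commutes with all $\FF_{p^2}$-morphisms, and a central element of a quaternion order lies in its center $\ZZ$), and has degree $p^2$, hence $\pi = \pm p$.

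The next step is to arrange the sign: if $\pi = -p$ I would pass to the quadratic twist $E_0'$ of $E_0$ by the nontrivial character of $\Gal(\FF_{p^2}\cdot\FF_{p^2}/\FF_{p^2})$... more precisely, twisting $E_0$ over $\FF_{p^2}$ by a quadratic extension changes $\Fr^2$ by a sign, so exactly one of $E_0$, $E_0'$ has $\Fr^2 = +p$. (Alternatively one observes that $E_0$ and its twist are the two $\FF_{p^2}$-forms of the same $\fpbar$-curve and their Frobenius-squared differ by $-1$.) This establishes existence. For uniqueness, suppose $E$ and $E'$ are two models over $\FF_{p^2}$, each with $\Fr^2 = p$, that become isomorphic over $\fpbar$. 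The set of $\FF_{p^2}$-forms of a fixed $\fpbar$-curve is a torsor under $H^1(\Gal(\fpbar/\FF_{p^2}), \Aut(E_{\fpbar}))$; concretely, $E$ and $E'$ differ by a cocycle, and since $\Aut$ is finite and the Galois group is $\widehat{\ZZ}$, such forms are governed by a twisting class. The key point is that twisting changes $\Fr_E^2$: a nontrivial twist multiplies the Weil number $\pi$ by a nontrivial root of unity (a root of unity in $\End(E)$ coming from the automorphism used to twist, composed appropriately), and for $\pi \in \ZZ$ the only way to stay in $\ZZ$ with the same absolute value $p$ is the sign twist, which sends $p \mapsto -p$. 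Hence no nontrivial twist preserves the condition $\Fr^2 = p$, so $E \cong E'$ over $\FF_{p^2}$.

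I would carry this out in the order: (i) $j(E) \in \FF_{p^2}$ and existence of \emph{some} $\FF_{p^2}$-model; (ii) identification of $\Fr^2$ as an integer of absolute value $p$, hence $\pm p$; (iii) sign correction by a quadratic twist to achieve $+p$; (iv) the torsor/twisting description of $\FF_{p^2}$-models and the observation that the only twist fixing the isomorphism class of the curve while changing $\Fr^2$ is the sign twist, giving uniqueness. The main obstacle — and the point requiring the most care — is step (iv), the uniqueness: one must be precise about how twisting by an element of $\Aut(E_{\fpbar})$ (which for $j \neq 0, 1728$ is just $\{\pm 1\}$, but can be cyclic of order $4$ or $6$ for the special $j$-invariants) interacts with Frobenius, and verify that in \emph{all} cases a nontrivial twist forces $\Fr^2$ to leave the value $p$. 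For the special $j$-invariants this amounts to checking that an order-$4$ or order-$6$ automorphism $u$ of $E$ over $\fpbar$, when used to twist, changes $\pi = \Fr_E^2$ to $u^? \cdot \pi$ (a genuine non-integer quaternion element unless $u = \pm 1$), which cannot equal $p$; I would isolate this as a short case analysis referencing the automorphism classification in \cite[III, \S10]{Sil} already invoked for Lemma~\ref{lem: rigid and solid}. A cleaner alternative for uniqueness, which I may prefer, is to note that $E$ with $\Fr^2 = p$ is exactly the reduction-type model whose $\FF_{p^2}$-point count is $p^2 + 1 \mp 2p$ with the correct sign, so the condition pins down the isogeny-theoretic data; but the twisting argument is the most self-contained.
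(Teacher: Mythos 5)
Your uniqueness step is essentially sound, but the existence argument has a genuine gap at step (ii). For an arbitrary $\FF_{p^2}$-model $E_0$ of a supersingular curve it is \emph{not} true that $\pi:={\rm Frob}_{E_0}^2$ lies in $\ZZ$. Your justification conflates $\End_{\FF_{p^2}}(E_0)$ with the geometric endomorphism ring: $\pi$ is central in $\End_{\FF_{p^2}}(E_0)$, but that ring is a maximal order of $B_{p,\infty}$ only when every geometric endomorphism is already $\FF_{p^2}$-rational, which is \emph{equivalent} to $\pi\in\ZZ$ --- so the argument is circular. And the claim really fails in the cases with extra automorphisms: since $\pi$ and $[p]$ are purely inseparable of degree $p^2$ with the same kernel, one has $\pi=p\zeta$ with $\zeta\in\Aut(E_0\otimes\fpbar)$, and when $j=0$ or $j=1728$ is supersingular (e.g.\ $p\equiv 3 \pmod 4$ for $j=1728$) suitable quartic or sextic twists over $\FF_{p^2}$ have Frobenius trace $0$ or $\pm p$, i.e.\ $\zeta$ of order $4$ or $6$, so $\pi\neq\pm p$. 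Thus precisely for the special $j$-invariants --- which the lemma must cover, and which matter later in the paper --- your existence proof is incomplete: you must first show that \emph{some} twist has $\pi=\pm p$ (e.g.\ by twisting away the root of unity $\zeta$, or by quoting Honda--Tate/Waterhouse) before the sign-correction in step (iii) can begin. Relatedly, step (iii) silently assumes $p>2$ (and ``$\Gal(\FF_{p^2}\cdot\FF_{p^2}/\FF_{p^2})$'' should presumably be $\Gal(\FF_{p^4}/\FF_{p^2})$); the paper is careful to treat $p=2$ separately, correcting the sign there by a cocycle valued in an order-$4$ automorphism rather than by an ordinary quadratic twist.

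For comparison, the paper does not reprove existence at all: it quotes the known statement for ${\rm Frob}_E^2=-p$ (Serre; Baker--Gonz\'alez-Jim\'enez--Gonz\'alez--Poonen, Lemma 3.20), which carries the hard content, and then changes the sign by a twist. Your step (iv) is the part that genuinely works and is in the same spirit as what the cited references do: once ${\rm Frob}_E^2=p$ is central, all geometric automorphisms are $\FF_{p^2}$-rational, the Galois action on $\Aut(E_{\fpbar})$ is trivial, and a form given by a cocycle with $c_\sigma=u$ has squared Frobenius corresponding to $u\cdot p$, which equals $p$ only for $u=1$; note this single computation already covers $j=0,1728$, so the case analysis you anticipate there is unnecessary --- the real work you are missing is in existence, not uniqueness.
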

\begin{proof}\footnote{We have learned of this nice fact from a post of Bjorn Poonen on Mathoverflow.} Let $\varphi = {\rm Frob}_E$. This Lemma is well-known for $\varphi^2 = -p$; cf. Serre \cite[p. 284]{Serre}, \cite[Lemma 3.20]{Baker}. The variant $\varphi^2 = p$ is obtained for $p>2$ by a quadratic twist. Even for $p=2$ we can pass from $\varphi^2  = -p$ to $\varphi^2 = p$, but this time by twisting by the element of $H^1(\Gal_{\fpbar/\FF_q}, \Aut(E))$ that sends Frobenius to $i$, an automorphism of order $4$ of $E$. 
\end{proof}

We fix such elliptic curves as representatives for the $\fpbar$-isomorphism classes of supersingular elliptic curves. 
Given $N$, we extend this choice to a choice of representatives $(E, P)$ for the $\fpbar$-isomorphism classes of pairs $(E_1, P_1)$, where $E_1$ is supersingular and $P_1$ is a point of $E_1$ of order ~$N$. There is no canonical way to choose the points $P$, and so we make an arbitrary choice once and for all, and call the set of representatives $Rep(N)$. For a representative $E$, the number of representatives $(E, P) \in Rep(N)$ is the number of orbits of $\Aut(E)$ acting on the points of order~$N$ in $E$.

We now define the \textit{supersingular graph $\Lambda_\ell(N)$ of level $N$}. The set of  vertices $\Lambda_\ell(N)^V$ of $\Lambda_\ell(N)$ is the set \textit{Rep}($N$) we have fixed above. The construction is such that $\Lambda_\ell(N)$  will be a directed graph, whose arrows are labeled, and we describe its arrows in two equivalent ways. 
\begin{enumerate}
\item For every vertex $(E, P)$, and a cyclic subgroup $C$ of $E$ of order $\ell$, draw an  arrow from the vertex  $(E, P)$ to the unique representative $(E_1, P_1)$ of $(E/C, P\ {\rm mod}\, C)$. Let $\pi_C\colon E \arr E/C$ be the canonical map, and let $\alpha\colon  E/C \arr E_1$ be an isomorphism such that $\alpha(\pi_C(P)) = P_1$. Note the morphism $\lambda = \alpha \circ \pi_C\colon  (E, P) \arr (E_1, P_1)$. It is unique up to  composition with elements of $\Aut(E_1, P_1)$.  An arrow is marked with the set of {\it labels} 
\[
\Aut(E_1, P_1) \circ \lambda.
\] 
Every element of $\Aut(E_1, P_1) \circ \lambda$ will be called a label of the arrow.
\item Let $(E, P)$ and $(E_1, P_1)$ be representatives in $\textit{Rep}(N)$, i.e., vertices of $\Lambda_\ell(N)$. The set of degree $\ell$ isogenies $\lambda\colon  (E, P) \arr (E_1, P_1)$, if non-empty, has a natural action of $\Aut(E_1, P_1)$, and we associate to every orbit  $\Aut(E_1, P_1)\circ \lambda$ of this action an arrow from $(E, P)$ to $(E_1, P_1)$, together with the set of labels $\Aut(E_1, P_1)\circ \lambda$. 
\end{enumerate}

\vspace{0.5cm}

\id In our second definition, every arrow, by means of its set of labels $\Aut(E_1, P_1)\circ \lambda$ produces a unique subgroup $C$ of $E$ of order $\ell$, which is the kernel of any of the isogenies in the set $\Aut(E_1, P_1)\circ \lambda$. Note that there is a unique isomorphism $\alpha\colon  E/C \arr E_1$ making the diagram below commutative: 
\[ \xymatrix{E \ar[rr]^\lambda\ar[dr]^{\pi_C} && E_1\\ & E/C \ar[ur]^{\alpha} & }\]
It follows that the point $P_1 = \lambda(P)$ is equal to $\alpha (\pi_C(P))$ and so the two constructions of the arrows in the graph are the same. 

\begin{rmk} Note that there is no obvious way to make the graphs $\Lambda_\ell(N)$ undirected; the natural idea of identifying an isogeny $\lambda$ with its dual $\lambda^\vee$ fails in general because $\lambda^\vee$ and $\lambda^{-1}$ differ by multiplication by $\ell$, which  acts non-trivially on points of order $N$ (unless $\ell \equiv 1 \pmod{N}$). 

Even in the case $N=1$ there is a problem as the graphs are typically non-symmetric. The number of arrows from $E_1$ to $E_2$ is $\Aut(E_1)/\Aut(E_2)$ times the number of arrows from $E_2$ to $E_1$. The adjacency matrix of $\Lambda_\ell(1)$ is the $\ell$-th Brandt matrix. See \cite{GrossHeights}, in particular Proposition~2.7.
Finally, we remark that in general the graphs may contain loops and multiple arrows. 
\end{rmk}

\subsection{Properties of supersingular graphs} Although the graphs $\Lambda_\ell(N)$ need not be simple in general, we do have the following proposition.

\begin{prop} If $N> 4\ell^2$ the graph $\Lambda_\ell(N)$ is simple, i.e., contains no loops or multiple edges; moreover, every arrow has a unique label in this case. The girth of the graph $\Lambda_\ell(N)$ is at least $\log_\ell(N/4)$.
\end{prop}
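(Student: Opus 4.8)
The plan is to reduce all three assertions to a single mechanism: any short ``feature'' of $\Lambda_\ell(N)$ at a vertex $(E,P)$---a loop, a pair of parallel arrows, or a short cycle---produces a \emph{nonzero} endomorphism $\beta\in\End(E)$ that kills $P$, and then the elementary chain $N=\sharp\langle P\rangle\le\sharp\ker(\beta)\le\deg(\beta)$ forces $\deg(\beta)$ to be large, contradicting the smallness of the feature. The degree estimates all come from the positive-definiteness of the degree form on the quaternion order $\End(E)$: for $\alpha\in\End(E)$ one has $|\Tr(\alpha)|\le 2\sqrt{\deg(\alpha)}$, and $\deg(\alpha-n)=\deg(\alpha)-n\Tr(\alpha)+n^2$ for $n\in\ZZ$.

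First I would observe that $N>4\ell^2>3$, so by Lemma~\ref{lem: rigid and solid} the pair $(N,p)$ is rigid, i.e.\ $\Aut(E_1,P_1)=\{1\}$ for every supersingular pair $(E_1,P_1)$. In particular the label set $\Aut(E_1,P_1)\circ\lambda$ of any arrow is the singleton $\{\lambda\}$---which is the ``unique label'' assertion---and an arrow $(E,P)\to(E_1,P_1)$ is precisely the datum of a degree-$\ell$ isogeny $\lambda\colon E\to E_1$ with $\lambda(P)=P_1$, distinct arrows between the same pair of vertices corresponding to distinct kernels. Consequently a closed walk of length $d$ based at $(E_0,P_0)$ composes (exactly as in the construction preceding Proposition~\ref{prop:hecke factorization}) to a well-defined $\tilde\omega\in\End(E_0)$ with $\deg(\tilde\omega)=\ell^d$ and $\tilde\omega(P_0)=P_0$.

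For the absence of loops and, more generally, for the girth bound, I would apply this to a cycle of length $d\ge 1$: the resulting $\tilde\omega$ has $\deg(\tilde\omega)=\ell^d\ge\ell>1$, hence $\tilde\omega\neq 1$, so $\beta=\tilde\omega-1\neq 0$ while $\langle P_0\rangle\subseteq\ker(\beta)$. Then $\deg(\beta)=\ell^d-\Tr(\tilde\omega)+1\le \ell^d+2\ell^{d/2}+1=(\ell^{d/2}+1)^2\le 4\ell^d$, so $N\le 4\ell^d$ and therefore $d\ge\log_\ell(N/4)$; the case $d=1$ already rules out loops, as it would force $N\le 4\ell<4\ell^2$. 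For parallel arrows the ``closed walk'' pattern is not directly available---the dual of an arrow moves $P_1$ to $\ell P_1$, so need not be an arrow---so given two arrows $\lambda,\lambda'\colon(E,P)\to(E_1,P_1)$ with $\ker(\lambda)\neq\ker(\lambda')$ I would instead consider $\psi=\lambda^\vee\circ\lambda'\in\End(E)$, of degree $\ell^2$, for which $\psi(P)=\lambda^\vee(\lambda'(P))=\lambda^\vee(P_1)=\lambda^\vee(\lambda(P))=\ell P$, so that $\langle P\rangle\subseteq\ker(\psi-[\ell])$. One checks $\psi\neq[\ell]$: otherwise $\lambda^\vee\circ(\lambda'-\lambda)=0$, and since a nonzero isogeny is surjective this forces $\lambda'=\lambda$, hence $\ker(\lambda)=\ker(\lambda')$, a contradiction. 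Then $\deg(\psi-[\ell])=2\ell^2-\ell\Tr(\psi)\le 2\ell^2+2\ell^2=4\ell^2$, giving $N\le 4\ell^2$, contrary to hypothesis.

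The only genuinely delicate point is the bookkeeping in the rigid case: checking that an arrow really may be identified with a degree-$\ell$ isogeny respecting the level point (so that composition of walks is literally composition of isogenies and lands in $\End(E_0)$), that $\langle P\rangle$ has order exactly $N$ inside the kernel it sits in, and that in each application of the degree--trace estimate the relevant endomorphism is known to be nonzero---the checks $\tilde\omega\neq 1$ and $\psi\neq[\ell]$. None of these is deep, but the parallel-arrows case is the one that does not fit the uniform pattern and so needs its own small argument, and I expect that to be where most of the care goes.
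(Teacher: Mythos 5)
Your proof is correct and follows essentially the same route as the paper: a closed walk of length $d$ yields $\gamma\in\End(E)$ of degree $\ell^d$ fixing $P$, whence $N\le\deg(\gamma-1)\le 4\ell^d$ giving the girth bound (and no loops), while two parallel arrows yield $\gamma=\lambda^\vee\circ\lambda'$ of degree $\ell^2$ with $\gamma(P)=\ell P$ and $N\le\deg(\gamma-\ell)\le 4\ell^2$, with uniqueness of labels coming from rigidity. The only differences are cosmetic — you use the trace--degree identity where the paper bounds $|\gamma-1|$ and $|\gamma-\ell|$ by viewing $\gamma$ as a complex number of modulus $\ell^{d/2}$ (resp.\ $\ell$) — and you make explicit the nonvanishing checks ($\tilde\omega\neq 1$, $\psi\neq[\ell]$) that the paper leaves implicit.
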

\begin{proof} The statement about the girth implies the one about the loops. Suppose that the girth is~$r$. Then, there is a representative $(E, P)$ and a $\gamma \in \End(E)$ such that the $\deg(\gamma) = \ell^r$ and $\gamma(P) = P$. Therefore, $N \vert \sharp \Ker(\gamma - 1)= \deg(\gamma - 1)$. Thinking about $\gamma$ as a complex number of modulus $\ell^{r/2}$ we have $\deg(\gamma - 1)  = \vert \gamma - 1\vert^2  \leq (\vert \gamma \vert + 1)^2 \leq (2\vert \gamma \vert)^2 = 4\ell^r$, which gives $N \leq 4\ell^r$.

Suppose we have two distinct arrows $f, g\colon  (E_1, P_1) \arr (E_2, P_2)$. Then $g^{-1}\circ f$ is a rational endomorphism of $(E_1, P_1)$, and $\gamma: = \ell g^{-1}\circ f = g^\vee \circ f$ is an endomorphism of $E_1$ of degree $\ell^2$ such that $\gamma(P_1) = \ell P_1$. Consequently, $N \vert \sharp \Ker(\gamma - \ell) = \vert \gamma - \ell\vert^2 \leq 4\ell^2$. Thus, if $N> 4\ell^2$ there are no multiple edges, and by Lemma~\ref{lem: rigid and solid} no multiple labels. 
\end{proof}

We note that there is a natural map
\[ \pi_N: \Lambda_\ell(N) \arr \Lambda_\ell(1), \qquad (E, P) \mapsto E, \]
and taking an arrow with label $\lambda\colon (E_1, P_1) \arr (E_2, P_2)$ to the arrow  $\lambda\colon E_1 \arr E_2$.

\

\id The construction of the graphs $\Lambda_\ell(N)$ is motivated by the study of the Hecke operator $T_\ell$; the directed walks of length $r$ in $\Lambda_\ell(N)$ that begin at $(E, P)$ describe the image of the point $(E, P) \in X_1(N)(\fpbar)$ under $T_\ell^r$, including multiplicities. 
With this in mind, we make a further study of these graphs. We first prove a technical lemma about translating isogenies to walks. 
 
\begin{lem}\label{lemma:walkisogeny} Let $\uE=(E,P)$ be a vertex of $\Lambda_\ell(N)$. Let $\lambda: \uE \arr \uE'$ be an isogeny of degree $\ell^r$. Let $\uE'' \in Rep(N)$, and  $\epsilon:\uE' \arr \uE''$ be an isomorphism. Then there is a walk of length $r$ in $\Lambda_\ell(N)$,
\[
\uE=\uE_0 \overset{\lambda_1}{\arr} \uE_1 \overset{\lambda_2}{\arr} \cdots \overset{\lambda_{r-1}}{\arr} \uE_{r-1} \overset{\lambda_r}{\arr} \uE_r=\uE'', \
\]
such that each $\lambda_i$ is a label for the corresponding arrow, and such that $\epsilon^{-1}\lambda_{r}...\lambda_2\lambda_1=\lambda$.
\end{lem}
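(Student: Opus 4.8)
The plan is to build the walk one step at a time by factoring $\lambda$ through a chain of degree-$\ell$ isogenies, and then use the definition of the arrows of $\Lambda_\ell(N)$ to record the combinatorial data. First I would factor $\lambda\colon \uE\arr\uE'$ as a composition of $r$ isogenies of degree $\ell$. This is standard: since $\deg\lambda=\ell^r$ and $\ell\neq p$, the kernel $\Ker(\lambda)$ is an \'etale group scheme of order $\ell^r$ which, being a subgroup of a one-dimensional formal/$p$-divisible picture away from $p$, is cyclic (it is contained in $E[\ell^\infty]$ and is killed by a power of $\ell$; a subgroup of $E[\ell^k]\cong(\ZZ/\ell^k)^2$ of order $\ell^r$ that is the kernel of a cyclic — i.e. separable with cyclic kernel — isogeny, which we may always arrange by absorbing multiplication-by-$\ell$ factors, is cyclic). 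Choosing a filtration $0=C_0\subset C_1\subset\cdots\subset C_r=\Ker(\lambda)$ with $[C_i:C_{i-1}]=\ell$ yields intermediate curves $E^{(i)}=E/C_i$ and canonical projections of degree $\ell$, whose composition is (up to an automorphism of the target) $\lambda$; pushing $P$ forward along each stage gives points $P^{(i)}$, and at the last stage $(E^{(r)},P^{(r)})\cong\uE'$.

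Next I would convert this chain of curves-with-level-structure into a walk in $\Lambda_\ell(N)$. For each $i$, let $\uE_i=(E_i,P_i)\in Rep(N)$ be the chosen representative of the isomorphism class of $(E^{(i)},P^{(i)})$, with $\uE_0=\uE$ and $\uE_r$ the representative of $\uE''$'s class — which, since $\uE''\in Rep(N)$ already, is $\uE''$ itself. By the very definition of the arrows of $\Lambda_\ell(N)$ (the first description in the text: for a cyclic subgroup of order $\ell$, draw an arrow to the representative of the quotient, labelled by $\Aut(\uE_i)\circ(\text{projection})$), the composition $E_{i-1}\to E^{(i-1)}/(\text{image of }C_i)\to E_i$ is a label $\lambda_i$ of an arrow $\uE_{i-1}\arr\uE_i$. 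Thus I obtain a walk of length $r$ from $\uE$ to $\uE''$ with each $\lambda_i$ a label. The remaining point is the identity $\epsilon^{-1}\lambda_r\cdots\lambda_1=\lambda$: this is a bookkeeping check tracking the chosen isomorphisms. The composite $\lambda_r\cdots\lambda_1\colon\uE\arr\uE''$ has kernel equal to $\Ker(\lambda)$ by construction (the $C_i$ were a filtration of $\Ker\lambda$), so $\lambda_r\cdots\lambda_1$ and $\lambda$ differ by an isomorphism $\uE'\arr\uE''$; I must check that isomorphism is exactly $\epsilon$, which I can force by choosing the final-stage isomorphism $(E^{(r)},P^{(r)})\cong\uE''$ to be the one compatible with $\epsilon$, i.e. adjusting the label $\lambda_r$ by the appropriate element of $\Aut(\uE'')$ (there is freedom in the label precisely by such automorphisms).

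The main obstacle I anticipate is not conceptual but one of careful coordination of the many isomorphisms — the representatives $\uE_i$, the isomorphisms $E^{(i)}/(\text{one more quotient})\cong E^{(i+1)}$, and the target identification $\epsilon$ — so that the telescoping composition comes out to be $\lambda$ on the nose rather than merely up to automorphism. Concretely, one should proceed by induction on $r$: the case $r=1$ is immediate from the definition of the arrows (given $\lambda$ of degree $\ell$ and $\epsilon\colon\uE'\arr\uE''$, the isogeny $\epsilon\lambda$ factors as an $\Aut(\uE'')$-translate of the canonical projection, which is a label, and that is the single arrow). For the inductive step, split off the last quotient $C_{r-1}\subset\Ker(\lambda)$: apply the inductive hypothesis to the induced degree-$\ell^{r-1}$ isogeny $\uE\arr(E/C_{r-1},\,P\bmod C_{r-1})$ with a chosen identification to its representative, then append one more arrow handled by the $r=1$ case, absorbing any discrepancy into the label of that last arrow using the $\Aut$-freedom. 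Since each label is only well-defined up to precomposition... up to postcomposition by $\Aut$ of the target, and since (by Lemma~\ref{lem: rigid and solid}) these automorphism groups are small — often trivial for $N$ large — this freedom is exactly enough to pin down the composition, and no further subtlety arises.
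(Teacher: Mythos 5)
Your proposal is correct and takes essentially the same route as the paper: filter $\Ker(\lambda)$ by subgroups with index-$\ell$ steps, transport the intermediate quotients to chosen representatives, define each label as the conjugated canonical projection, and arrange the final identification to produce $\epsilon$ exactly (the paper does this in one shot by setting the last isomorphism to be $\epsilon\delta$, rather than adjusting the last label by an element of $\Aut(\uE'')$, but these are equivalent). One small caveat: your parenthetical claim that $\Ker(\lambda)$ is cyclic is false in general (e.g.\ $\lambda=[\ell]$ has kernel $(\ZZ/\ell\ZZ)^2$), and ``absorbing multiplication-by-$\ell$ factors'' would change $\lambda$ and break the required identity --- but this is harmless, since your argument only uses a filtration with quotients of order $\ell$, which exists for any finite abelian $\ell$-group.
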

 
 \begin{proof} Let $C_r=\Ker(\lambda)$, and factorize $\lambda$ as $(E,P) \arr (E/C_r, P \;{\rm mod }\;C_r) \overset{\delta}\cong \uE'$. Let 
\[ 0 = C_0 \subsetneq C_1 \subsetneq C_2 \subsetneq \dots \subsetneq C_r = \Ker(\lambda) \]
be a filtration by subgroups with cyclic quotients of order $\ell$. For each $0<i<r$, let $\epsilon_i$ be an isomorphism
\[\epsilon_i\colon  (E/C_i, P \;{\rm mod }\;C_i) \cong \uE_i , \]
where $\uE_i \in \textit{Rep}(N)$, such that $\epsilon_0=id$, $\uE_r=\uE''$, and  $\epsilon_r=\epsilon\delta$.  Let $\rho_i\colon  E/C_{i-1} \arr E/C_i$ be the canonical morphism.  Since $C_r$ is the kernel of $\lambda$, we have  $\lambda=\delta\rho_n...\rho_1$. This yields a factorization
\[
\epsilon \lambda= (\epsilon_r\circ \rho_r\circ\epsilon_{r-1}^{-1})\circ \cdots \circ (\epsilon_2 \circ\rho_2\circ\epsilon_1^{-1}) \circ ( \epsilon_1 \circ\rho_1\circ \epsilon_0^{-1}).
 \]
Note that each $\lambda_i:=\epsilon_i\circ \rho_n\circ\epsilon_{i-1}^{-1}$ is a label for an arrow in the graph $\Lambda_\ell(N)$ from $\uE_{i-1}$ to $\uE_i$. This provides a walk of length $r$ starting at $\uE$,  ending at $\uE''$, such that $\epsilon^{-1}\lambda_{r}...\lambda_2\lambda_1=\lambda$.
 \end{proof}

\begin{thm} \label{thm: properties of the graphs}The graphs $\Lambda_\ell(N)$ have the following properties.
\begin{enumerate} \item $\Lambda_\ell(N)$ is a connected, directed graph of fixed out-degree $\ell+1$.
\item The graph $\Lambda_\ell(N)$ is not bipartite. 
\item If $N$ is rigid, every vertex in $\Lambda_\ell(N)$ has in-degree $\ell+1$ as well.
\end{enumerate}
\end{thm}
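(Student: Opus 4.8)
The plan is to handle the three assertions in turn, reducing each to a statement about supersingular elliptic curves in characteristic $p$ together with the dictionary between isogenies and walks supplied by Lemma~\ref{lemma:walkisogeny}. For (1), fixed out-degree $\ell+1$ is immediate from the first description of the arrows: the arrows leaving a vertex $(E,P)$ are indexed by the cyclic subgroups $C \subset E$ of order $\ell$, of which there are exactly $\ell+1$. Connectedness is where Lemma~\ref{lemma:walkisogeny} enters: given two vertices $\uE,\uE''$, it suffices to exhibit \emph{some} isogeny of $\ell$-power degree between the underlying curves carrying the level structure correctly; then the lemma turns it into a walk. To produce such an isogeny one uses that $\Lambda_\ell(1)$ is connected — this is the classical fact that the supersingular $\ell$-isogeny graph is connected (Brandt/Eichler; cf. the remark citing \cite{GrossHeights}) — to get an $\ell$-power isogeny $E \to E''$, and then one must also match the point of order $N$. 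Because $(N,\ell)=1$, the operator $\ell$ (equivalently, multiplying a chosen $\ell$-power isogeny by a suitable power of $\ell$, using $\ell^{\alpha(N)} \equiv 1 \pmod N$ as in the symmetry argument for $\overset{\ell}{\sim}$ in the ordinary section) acts invertibly on $E''[N]$, so one can adjust the isogeny to send $P$ to any prescribed point in its $\ell$-power-isogeny-orbit; and varying over the $\ell+1$ choices of subgroup at each step one sees every point of order $N$ on a given curve is reachable. This gives connectedness.

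For (2), non-bipartiteness, the cleanest route is to exhibit an odd closed walk, i.e.\ a vertex $(E,P)$ and an $\ell^r$-isogeny $\gamma\colon E \to E$ with $r$ odd fixing $P$, equivalently an endomorphism of $E$ of norm $\ell^r$ with $r$ odd that fixes $P$. Over $\QQ$ the quaternion order $\calR = \End(E)$ (for a well-chosen supersingular $E$, e.g.\ one with many automorphisms, or simply any $E$) certainly contains elements of odd norm prime to $p$: this is a standard fact about maximal orders in $B_{p,\infty}$ — the norm form is a quaternary quadratic form representing all sufficiently large integers, in particular odd values of the shape $\ell^r$ for suitable odd $r$, or one can be even more elementary and note that $1 + (\text{something of norm } \ell)$ or a product $\ell \cdot u$ with $u$ a unit has odd... — more carefully, one wants an endomorphism of degree exactly an odd power of $\ell$. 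One way: pick $r$ with $\ell^r \equiv 1 \pmod N$ and $r$ odd (possible by taking $r$ an odd multiple of $\alpha(N)$ if $\alpha(N)$ is odd; if $\alpha(N)$ is even one works a little harder, using that for $N$ small enough relative to $\ell^r$ the congruence $\gamma(P)=P$ can instead be arranged directly by a counting/pigeonhole argument as in the girth proof). Then by connectedness of $\Lambda_\ell(1)$ there is an $\ell^r$-isogeny $E \to E$, and after the usual $\ell$-power adjustment it fixes $P$; this closed walk of odd length $r$ shows the graph is not bipartite. I would also want to double-check the degenerate small-$N$ cases (where vertices can coincide with nontrivial automorphisms) do not cause an accidental parity collapse — but since an odd closed \emph{walk} through a vertex suffices regardless of multiplicities, this is harmless.

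For (3), in-degree $\ell+1$ when $(N,p)$ is rigid: the total number of arrows of $\Lambda_\ell(N)$ equals $(\ell+1)\cdot\#\Lambda_\ell(N)^V$ by (1), so it suffices to show each vertex has in-degree at least $\ell+1$, or better, to set up a bijection between arrows into $(E_1,P_1)$ and cyclic order-$\ell$ subgroups of $E_1$. The natural candidate is to send an arrow $(E,P)\xrightarrow{\lambda}(E_1,P_1)$ to $\lambda(E[\ell])/$\dots — more precisely to the kernel of the dual, $\Ker(\lambda^\vee) \subset E_1$, a cyclic subgroup of order $\ell$. In the rigid case every arrow has a \emph{unique} label $\lambda$ (Lemma~\ref{lem: rigid and solid} gives $\Aut(E_1,P_1)=\{1\}$), so $\lambda^\vee$ and hence $\Ker(\lambda^\vee)$ is well defined. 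Conversely, given a cyclic $C' \subset E_1$ of order $\ell$, form $E_1/C'$ with image point $P_1 \bmod C'$; this is isomorphic to a unique representative $(E,P)$, and the composite $E_1 \to E_1/C' \xrightarrow{\sim} (E,P)$ has an $\ell$-power-degree "partner" going the other way — concretely, take the isogeny $(E,P)\to(E_1,P_1)$ which is $\ell^{\alpha(N)-1}$ times the dual of $E_1\to E_1/C'\cong E$, which has degree $\ell^{\alpha(N)}$ and carries $P$ to $P_1$ by the symmetry computation already used; truncating to degree exactly $\ell$ when $\alpha(N)=1$ and in general observing it factors through an arrow of the graph whose dual-kernel is $C'$ — this is the reverse direction of the dictionary and one must check the two assignments are mutually inverse. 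The main obstacle I anticipate is precisely this bookkeeping with the $\ell$-power twist needed to preserve the level structure (it does not literally give a degree-$\ell$ isogeny in the wrong direction, only a degree-$\ell^{\alpha(N)}$ one), so the cleanest argument may instead be a counting one: show directly, via the Brandt-matrix / Eichler mass-formula identity $\sum_{E}\ell+1 = \sum_{E_1}(\text{in-deg }E_1)$ refined to level $N$, that the in-degrees, which sum to $(\ell+1)\#\Lambda_\ell(N)^V$ and are each at most $\ell+1$ (by an argument dual to the out-degree count, using rigidity to kill automorphism ambiguities), must all equal $\ell+1$. I expect step (3), and within it the verification that the dual construction lands among the graph's labeled arrows correctly under the level structure, to be the part requiring the most care; (1) and (2) should be short.
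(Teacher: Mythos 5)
The central gap is in your connectedness argument, and it propagates into part (2). Connectedness of $\Lambda_\ell(1)$ only lets you move between underlying curves; the real difficulty is to connect two vertices $(E,P_1)$ and $(E,P_2)$ with the \emph{same} underlying curve. Your proposed fix — multiplying an $\ell$-power isogeny by powers of $\ell$ and ``varying over the $\ell+1$ choices of subgroup at each step'' — does not do this: multiplying by $\ell^k$ only moves the image point inside its cyclic $\langle \ell\rangle$-orbit in $E[N]$, and the claim that varying subgroups reaches every point of order $N$ is exactly the statement to be proved, not an observation. What is needed is that endomorphisms of $E$ of $\ell$-power degree act transitively on points of order $N$; the paper obtains this from strong approximation at the place $\ell$ for the norm-one group of $B_{p,\infty}$ (a form of $\SL_2$), producing a rational endomorphism integral away from $\ell$ that induces a prescribed element of $\SL_2(\ZZ/N\ZZ)$ on $E[N]$, and then Lemma~\ref{lemma:walkisogeny} converts $\ell^{\alpha(N)m}y$ into a walk. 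This argument moreover yields walks of \emph{even} length between $(E,P_1)$ and $(E,P_2)$, which is precisely what your part (2) is missing: for general $N$, an endomorphism $\gamma$ of degree $\ell^r$ with $\ell^r\equiv 1\pmod N$ need not fix $P$ (degree only controls the determinant of $\gamma$ on $E[N]$), your ``$\ell$-power adjustment'' again only moves $\gamma(P)$ inside its $\langle\ell\rangle$-orbit, and the pigeonhole suggestion does not produce such a $\gamma$. The paper instead takes the odd-length open walk given by $\gamma$ (whose existence for $N=1$ it proves, as you essentially propose, by showing the quaternary degree form represents large integers prime to $p$, with the local check at $p$ done via class field theory) and closes it up with an even-length return walk supplied by the strong-approximation construction. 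Without that ingredient neither (1) nor (2) is complete.

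Part (3) is closer, but as written it is also not finished. Your bijection attempt founders exactly where you say it does: multiplying the dual isogeny by $\ell^{\alpha(N)-1}$ gives an isogeny of degree $\ell^{2\alpha(N)-1}$ (not $\ell^{\alpha(N)}$), i.e.\ a walk, not an arrow. The paper's trick is to adjust the \emph{source point} instead: for each order-$\ell$ subgroup $C_i\subset E$, the dual $\pi_i^\vee$ is a genuine arrow $(E/C_i,\ \ell^{\alpha(N)-1}P\ \mathrm{mod}\ C_i)\arr (E,\ell^{\alpha(N)}P)=(E,P)$, and rigidity shows these $\ell+1$ arrows are distinct, giving in-degree $\geq \ell+1$; the counting identity (sum of in-degrees $=(\ell+1)\cdot\#\Lambda_\ell(N)^V$) then forces equality. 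Your alternative — proving in-degree $\leq\ell+1$ via injectivity of $\lambda\mapsto \Ker(\lambda^\vee)$ and the same count — can be made to work, but the injectivity step (two labels with the same dual kernel differ by an automorphism of the source fixing $P$, hence are equal by rigidity) is the crux and is only asserted in your sketch.
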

\begin{proof} The fact that $\Lambda_\ell(N)$ is directed and every vertex has out-degree $\ell+1$ follows from the construction and the description of arrows by means of cyclic subgroups of order $\ell$. 
 
Let $G$ be the algebraic group over $\QQ$ whose $\QQ$ points are $B_{p, \infty}^1 = \{ x\in B_{p, \infty}\colon  {\rm Nm} (x) = 1\}$; it is a form of $\SL_2$ and so $G$ is semi-simple, almost simple, and simply connected. Fix two pairs $(E, P_1), (E, P_2)$ in $\textit{Rep}(N)$. We identify $B_{p, \infty}$ with $\End^0(E)$ and that gives us a maximal order $\calR$ that corresponds to $\End(E)$. The group $G(\QQ)$ is then the group of rational endomorphisms of~$E$ of norm $1$. We choose a symplectic basis for the Tate modules $T_u(E)$ for every prime $u \neq p$. The action of $G(\ZZ_u)$ on $T_u(E)$ induces an isomorphism $G(\ZZ_u)\cong \SL_2(\ZZ_u)$.

Using strong approximation for $G$ relative to the place $\ell$, we can find an element $y$ of $G(\QQ)$ that is very close, except possibly at $\ell$, to the element
\[ x\in G(\AA^f_\QQ), \qquad x = (x_u)_u = \begin{cases} 1 & u \nmid  N, \\ x_u & u \vert N,\end{cases}\]
where the elements $x_u, u\vert N$, are chosen to be in $\SL_2(\ZZ_u)$ and such that $\prod_{u \mid N} x_u \pmod{N} \in \SL_2(\ZZ/N\ZZ)$ takes $P_1$ to $P_2$. 

The element $y$ is thus integral, except possibly at $\ell$, and ${\rm Nm}(y)=1$. Replacing $y$ by $\ell^{\alpha(N)m}y$ for a suitable positive integer $m$, we have proven the existence of an isogeny $f\colon  (E, P_1) \arr (E, P_2)$ of degree a power of $\ell$. Using Lemma~\ref{lemma:walkisogeny}, we can factor $f$ as a composition of cyclic isogenies of degree $\ell$ and we thus find that $(E, P_1)$ is connected to $(E, P_2)$ in $\Lambda_\ell(N)$. Combined with the well-known fact that $\Lambda_\ell(1)$ is connected (namely, between any two supersingular elliptic curves over $\fpbar$ there is some isogeny of degree a power of $\ell$ -- see below), one concludes that $\Lambda_\ell(N)$ is connected as well. In fact, we have proven that for all $E$, and for all $P_1,P_2$ of order  $N$ on $E$, there is a walk of even length from $(E,P_1)$ to $(E,P_2)$. 

We now prove that $\Lambda_\ell(N)$ is not bipartite.  We first consider the case $N=1$. It is enough to prove that there is a closed walk of odd length in $\Lambda(1)$. By Lemma \ref{lemma:walkisogeny}, this is equivalent to proving that there is an elliptic curve $E$ with an endomorphism of degree $\ell^r$, where $r$ is a positive odd integer.  Let $E$ be an arbitrary supersingular elliptic curve, and consider the quadratic form in $4$ variables and discriminant $p^2$,
\[
\deg\colon \End(E)\cong \calR \arr \ZZ.
\]
We need to prove that $\deg$ represents an integer of the form $\ell^r$, for some $r$ a positive odd integer. We show that it represents every large enough positive integer prime to $p$. 
We will use the fact that the Integral Hasse Principle for a quadratic form on a rank $4$ lattice holds for large enough integers that are relatively prime to the discriminant of the quadratic form of the lattice. Since $\ell$ is prime to $p$, our desired result follows if we prove that the quadratic form $\deg$ represents every integer locally. The quadratic form $\deg$ is positive definite at infinity. At a finite place $v \neq p$, we have an isomorphism $\calR \otimes_\ZZ \ZZ_v \cong M_2(\ZZ_v)$ via which  norm  corresponds to the determinant, and every $v$-adic integer can be represented as a determinant. At $p$ we have $\calR\otimes_\ZZ\ZZ_p \cong \scrR$ which has a model   
\[\left\{ \begin{pmatrix} a & pb^\sigma \\ b & a^\sigma
\end{pmatrix}: a, b \in W(\FF_{p^2})\right\},\]
where the degree map sends the above element to the determinant $aa^\sigma - pbb^\sigma$. By local class field theory, the norm map $W(\FF_{p^2})^\times \arr \ZZ_p^\times$ is surjective. Therefore,  all elements of $\ZZ_p$ of even valuation are norms of the form $aa^\sigma$, and all those with odd valuations are norms of the form $-pbb^\sigma$. This ends the proof that $\Lambda_\ell(1)$ is not bipartite.  

Next we prove the result for general $N$. We show again that $\Lambda_\ell(N)$ has a closed walk of odd length. Let $(E,P)$ be a vertex. Let $\gamma: E \arr E$ be an isogeny of degree $\ell^r$ with $r$ odd as in the previous paragraph. By Lemma \ref{lemma:walkisogeny}, this isogeny provides us with a walk of odd length from $(E,P)$ to $(E,P')$, where $(E,P')\in Rep(N)$ is isomorphic to $(E,\gamma(P)$). In proving the connectivity of the graph, we have shown that there is a walk of even length from $(E,P')$ to $(E,P)$. Composing these walks, we obtain a closed walk of odd length starting and ending at $(E,P)$. This proves that $\Lambda_\ell(N)$ is not bipartite.

We now prove part (3). Suppose that $(N, p)$ is rigid. By counting, it is enough to show that every vertex $(E, P)$ has in-degree at least $\ell+1$. Let $C_1, \dots, C_{\ell+1}$ be the subgroups of order $\ell$ of $E$ and $\pi_i\colon E \arr E/C_i$ the canonical morphism. Consider the diagrams
\[ \xymatrix{(E, \ell^{\alpha(N) - 1} P) \ar[r]^(0.40){\pi_i} & (E/C_i, \ell^{\alpha(N) - 1} P \;{\rm mod }\;C_i) \ar[r]^(0.54){\pi_i^\vee}& (E, \ell^{\alpha(N)} P) = (E, P).}\]
Then, replacing $(E/C_i, \ell^{\alpha(N) - 1} P \;{\rm mod }\;C_i)$ by the representative $(E/C_i, \ell^{\alpha(N) - 1} P \;{\rm mod }\;C_i)^r$ and adjusting $\pi_i^\vee$ by the unique implied $\ell$-isogeny $\pi_i^{\vee,r}$, we find $\ell+1$ arrows leading into $(E, P)$:
\[ \xymatrix{(E/C_i, \ell^{\alpha(N) - 1} P \;{\rm mod }\;C_i)^r \ar[r]^(0.7){\pi_i^{\vee,r}}& (E, P).}\]
We claim that these are distinct arrows. If for some $i, j$ we get the same arrow then we have  $(E/C_i, \ell^{\alpha(N) - 1} P \;{\rm mod }\;C_i)\cong (E/C_j, \ell^{\alpha(N) - 1} P \;{\rm mod }\;C_j)$ \textit{and}, under this (unique) isomorphism, $\pi_i^\vee = \pi_j^\vee$. But then $\pi_i = \pi_j$ and so $C_i = C_j$. 
\end{proof}

\subsection{From walks to endomorphisms}
We introduce some further constructions. In addition to fixing $p, \ell$, $N$, fix now an object $\uE = (E, P)$, where $E$ is a supersingular elliptic curve, and $P$ is a point on $E$ of order  $N$. Let $\calR = \End_{\fpbar}(E)$.

Recall that if $h$ is an arrow from $\uE_1=(E_1,P_1)$ to $\uE_2=(E_2,P_2)$ in $\Lambda_\ell(N)$, then $h$ is an orbit under the action of $\Aut(\uE_2)$ on the set of degree $\ell$ isogenies from $E_1$ to $E_2$ that take $P_1$ to $P_2$. Elements in this orbit were called labels for $h$. By a \textit{labeled walk} in the graph $\Lambda_\ell(N)$ we mean a walk with a choice of label for each arrow in the walk. 
For two closed labeled walks $\omega_1$ and $\omega_2$ based at $\uE$, their composition $\omega_2 \circ \omega_1$ is the labeled walk obtained by traversing $ \omega_1$ followed by~$\omega_2$.  We allow the empty walk and its set of labels is, per definition, $\Aut(\uE)$. We introduce the following notation: 
\vspace{0.4cm}
\begin{itemize}
\item $\Omega(\Lambda_\ell(N), \uE)$ denotes the monoid of closed labeled walks in $\Lambda_\ell(N)$ starting at $\uE$.

\item  Given $\omega\in \Omega(\Lambda_\ell(N), \uE)$, by composing the labels of the arrows occurring in $\omega$, 
we get an endomorphism $\tilde \omega \in \calR$; as $\tilde\omega$ has norm a power of $\ell$,  we have $\tilde\omega\in\scrR^\times$.  

\item We denote by $\scrH_N^+(\uE) $ the monoid in $\scrR^\times$ obtained as the image of $\Omega(\Lambda_\ell(N), \uE)$. By Lemma \ref{lemma:walkisogeny},  $\scrH_N^+(\uE)$ consists of the endomorphisms $f$ of $E$ of degree a non-negative power of $\ell$ such that $f(P) = P$. 
Let $\scrH_N(\uE)$ be the subgroup  of $\scrR^\times$ generated by $\scrH_N^+(\uE)$. Note that $\ell^{\alpha(N)} \in \scrH_N(\uE)$.

\item We denote by $\scrG_N(\uE)$ the group in $\PGL_2(\QQ_q)$ that is the image of $\scrH_N(\uE)$ under the composition $\scrB^\times \arr \GL_2(\QQ_q)  \arr \PGL_2(\QQ_q)$. The map $\scrB^\times \arr \GL_2(\QQ_q)$ is chosen so that the image of $\scrR$ is
\[ \left\{ \left( \begin{smallmatrix} a & pb^\sigma \\ b & a^\sigma
\end{smallmatrix}\right): a, b\in W(\FF_q) 
\right\}, \]
where $\sigma$ is the Frobenius automorphism.
For $\omega\in \Omega(\Lambda_\ell(N), \uE)$, we denote the image of~$\tilde{\omega}$ in $\scrG_N(\uE)$ by $\tilde{\omega}$ again.
\end{itemize}

\vspace{0.4cm}

\begin{lem} $\scrH_N(\uE)$ is the group of rational endomorphisms of $E$ of the form $f = f_1/\ell^t$ such that $f_1\in \End(E), \deg(f_1) = \ell^{r}$, $r, t\in \NN$, and $f(P) = P$.
\end{lem}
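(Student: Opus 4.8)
The plan is to verify two inclusions, reading the asserted description as the set $S$ of all $f \in B^\times = \End^0(E)$ such that $\ell^t f \in \End(E)$ for some $t \in \NN$, the degree $\deg(\ell^t f)$ is a power of $\ell$, and $f(P) = P$. Here $f(P)$ is unambiguous because $(\ell, N) = 1$ makes multiplication by $\ell^t$ invertible on $E[N]$, so $f$ acts on $E[N]$.

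First I would record that $S$ is a subgroup of $\scrR^\times$. Closure under products is immediate from multiplicativity of the degree; for inverses one uses $f^{-1} = \bar f/\deg f$, noting that quaternion conjugation and multiplication by powers of $\ell$ preserve all three defining conditions — the essential point being that clearing a denominator or inverting an element only ever introduces further powers of $\ell$, never a power of $p$. Since the reduced norm of any element of $S$ is a power of $\ell$, hence a $p$-adic unit, and since such an element lies in $\calR \otimes \ZZ_p = \scrR$, we get $S \subseteq \scrR^\times$. Now every element of $\scrH_N^+(\uE)$ — which by Lemma~\ref{lemma:walkisogeny} is precisely an endomorphism of $E$ of $\ell$-power degree fixing $P$ — visibly lies in $S$; since $\scrH_N(\uE)$ is the subgroup generated by $\scrH_N^+(\uE)$ and $S$ is a group, this gives $\scrH_N(\uE) \subseteq S$.

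For the reverse inclusion, take $f \in S$ and set $f_1 = \ell^t f \in \End(E)$, so $\deg(f_1)$ is a power of $\ell$. The key device is the observation, already noted in the excerpt, that $\ell^{\alpha(N)} \in \scrH_N^+(\uE)$: it has $\ell$-power degree, and since $\ell^{\alpha(N)} \equiv 1 \pmod N$ it fixes $P$. Hence $\ell^s$ and $\ell^{-s}$ lie in $\scrH_N(\uE)$ whenever $\alpha(N) \mid s$. Choose such an $s$ with $s \geq t$. Then $\ell^s f = \ell^{s-t} f_1$ is an honest endomorphism of $E$ of $\ell$-power degree, and $(\ell^s f)(P) = \ell^s\, f(P) = \ell^s P = P$, so $\ell^s f \in \scrH_N^+(\uE) \subseteq \scrH_N(\uE)$. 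Therefore $f = \ell^{-s}\cdot(\ell^s f) \in \scrH_N(\uE)$, which finishes the proof.

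The argument is essentially formal once Lemma~\ref{lemma:walkisogeny} and the membership $\ell^{\alpha(N)} \in \scrH_N(\uE)$ are in hand, so there is no serious obstacle; the only steps requiring a little care are checking that $S$ is genuinely a group sitting inside $\scrR^\times$ (i.e.\ that all denominators remain powers of $\ell$ under products and inverses) and that the condition $f(P) = P$ is well-posed and transforms correctly under scaling by powers of $\ell$.
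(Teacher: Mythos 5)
Your proof is correct and follows essentially the same route as the paper: the containment $\scrH_N(\uE)\subseteq S$ comes down to the identity $g^{-1}=g^\vee/\deg(g)$ (your $\bar f/\deg f$), and the reverse inclusion is exactly the paper's device of clearing denominators by a power of $\ell$ divisible by $\alpha(N)$ (the paper takes $\ell^{\alpha(N)t}f$) and invoking $\ell^{\alpha(N)}\in\scrH_N^+(\uE)$. Your extra care in checking that $S$ is a subgroup of $\scrR^\times$ and that $f(P)=P$ is well-posed is a harmless elaboration of what the paper leaves implicit.
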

\begin{proof} One inclusion is clear: using that for an isogeny $g$, $g^{-1} = g^\vee / \deg(g) $, elements of $\scrH_N(\uE)$   have that property. For the other, note that for $f$ with that property, $g := \ell^{\alpha(N)t} f$ is an endomorphism  satisfying $g(P) = P$ and thus belongs to $\scrH_N^+(\uE)$. The result now follows since $\ell^{\alpha(N)}$ is in $\scrH^+_N(\uE)$. 
\end{proof}

From this point on, we fix $\uE=(E,P)$ in $Rep(N)$ and simplify the notation by dropping $\uE$ from the notation: $\scrH_N=\scrH_N(\uE)$, $\scrG_N=\scrG_N(\uE)$, $\Omega(\Lambda_\ell(N))=\Omega(\Lambda_\ell(N),\uE)$. 

\vspace{0.4cm}
\id It follows easily from the definitions that the  following diagram is commutative and its arrows are homomorphisms that have the indicated properties.  

\[ \xymatrix@!C=1.5cm@M = 0.2cm{
\Omega(\Lambda_\ell(N))\ar[d]^{\pi_N}\ar[r] & \scrH_N \ar@{->>}[r] \ar@{^(->}[d]& \scrG_N \ar@{^(->}[d]	\\
\Omega(\Lambda_\ell(1)) \ar[r] & \scrH_1 \ar@{->>}[r] & \scrG_1	
}
\]

\

If $p \equiv 1 \pmod{12}$, then the pair $(1,p)$ is solid, in the sense of Definition~\ref{dfn: solid rigid}. In particular, $f \mapsto f^\vee$ defines a one-one correspondence between $\Hom(E,E')$ and $\Hom(E',E)$, for any two supersingular elliptic curves $E,E'$. Therefore, we can, just as in the ordinary case, define an undirected graph $\Lambda_\ell^{\rm ud}(1)$ by identifying an arrow corresponding to $f$ with the reverse arrow corresponding to $f^\vee$. Note that when $f$ is an endomorphism of $E$ of degree $\ell$, and $f\neq \pm f^\vee$, the corresponding loop in $\Lambda_\ell^{\rm ud}(1)$ is considered with multiplicity $2$.

Let $H(4\ell)$ denote the Hurwitz class number of positive definite primitive quadratic forms of discriminant $-4\ell$, see \cite{GrossHeights}.
\begin{prop}\label{prop: properties of scrH_n}The following hold: 
\begin{enumerate}
\item The group $\scrH_N$ (resp., $\scrG_N$) is a finitely-generated finite-index subgroup of $\scrH_1$ (resp., $\scrG_1$).
\item The groups $\scrG_N$ are highly non-commutative. To be precise:  \begin{enumerate}
\item The groups $\scrH_N$ and $\scrG_N$ have arbitrarily large subsets of pairwise non-commuting elements. 
\item  Let $p \equiv 1 \pmod{12}$. Let $\gamma(\ell) =  \frac{1}{2}H(4\ell)$. Then the fundamental group $\pi_1(\Lambda_\ell(1), E)$ is a free group of rank  $1+ \frac{\gamma(\ell)}{2} + \frac{(\ell-1) \cdot (p-1)}{24}$. There are elements $\scrC_1, \dots, \scrC_{\gamma(\ell)}$ that are part of a basis of $\pi_1(\Lambda_\ell(1), E)$ such that 
\[ \scrG_1 \cong \pi_1(\Lambda_\ell(1), E)/\langle\!\! \langle \scrC_1^2, \dots, \scrC_{\gamma(\ell)}^2 \rangle\!\!\rangle,\]
where  $\langle\!\!\langle \scrC_1^2, \dots, \scrC_{\gamma(\ell)}^2 \rangle\!\!\rangle$ denotes the minimal normal subgroup containing  $\scrC_1^2, \dots, \scrC_{\gamma(\ell)}^2$.
Consequently, $\scrG_1$ has a quotient that is a free group of rank 
$1 -\frac{\gamma(\ell)}{2} + \frac{(\ell-1) \cdot (p-1)}{24}$.
\end{enumerate}
\end{enumerate}
\end{prop}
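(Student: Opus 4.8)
The plan is to route everything through the action of $\scrH_1=\calR[1/\ell]^{\times}$ (where $\calR=\End(E)$) on the Bruhat--Tits tree $\calT$ of $\PGL_2(\QQ_\ell)$, coming from the splitting $\calR[1/\ell]\otimes_{\ZZ}\QQ_\ell\cong M_2(\QQ_\ell)$, together with the classical fact (Deuring; cf.\ \cite{GrossHeights}) that the quotient graph $\scrH_1\backslash\calT$ is the undirected $\ell$-isogeny graph of supersingular elliptic curves. Here $\scrH_1=\calR[1/\ell]^{\times}$ follows from the description of $\scrH_N$ recalled above and the remark that, for $f\in B_{p,\infty}^{\times}$, one has $f\in\calR[1/\ell]^{\times}$ exactly when $\deg f\in\ell^{\ZZ}$; the centre of $\scrH_1$ is $\{\pm\ell^{n}:n\in\ZZ\}$, and the quotient by it is (abstractly) $\scrG_1$, which acts on $\calT$ through $\calR[1/\ell]^{\times}\hookrightarrow\GL_2(\QQ_\ell)$. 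Everything below descends along this central quotient.

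\emph{Part (1).} The group $\scrH_1$ acts on $\calT$ with finite vertex stabilizers (conjugates of $\Aut(E_i)/\{\pm1\}$) and finite quotient (the number of supersingular $E_i$ being finite), so by Bass--Serre theory it is the fundamental group of a finite graph of finite groups, in particular finitely generated; the same holds for $\scrG_1$. Since $(\ell,N)=1$, the group $\scrH_1$ acts on $E[N]\cong(\ZZ/N\ZZ)^{2}$ through the finite group $\Aut(E[N])$, and by the description of $\scrH_N$ one has $\scrH_N=\Stab_{\scrH_1}(P)$, whence $[\scrH_1:\scrH_N]\le\#\Aut(E[N])<\infty$. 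As a finite-index subgroup of a finitely generated group is finitely generated (and likewise for the images in $\scrG$), this gives (1).

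\emph{Part (2)(a).} Work in the ambient setting $p\equiv1\pmod{12}$, so $\Lambda_\ell^{\rm ud}(1)$ is defined, $\scrH_1\backslash\calT=\Lambda_\ell^{\rm ud}(1)$, and all $\Aut(E_i)=\{\pm1\}$; hence the action of $\scrH_1$ on $\calT$ has trivial vertex stabilizers and the only obstruction to freeness is by edge inversions --- order-$2$ elements of $\scrG_1$ with no fixed vertex. Lifting such an element to $\calR[1/\ell]^{\times}$ and using that neither $\QQ(i)$ nor $\QQ(\omega)$ embeds into $B_{p,\infty}$ when $p\equiv1\pmod{12}$, one finds these are precisely the classes of the trace-zero degree-$\ell$ endomorphisms $f$ of the $E_i$ (so $f^{2}=-\ell$; equivalently, embeddings of the order of discriminant $-4\ell$ into the $\End(E_i)$), and that modulo $f\leftrightarrow-f$ they correspond bijectively to the self-dual loops of $\Lambda_\ell^{\rm ud}(1)$; write $\scrC_1,\dots,\scrC_{\gamma(\ell)}$ for the based loops they determine. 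The number of self-dual loops is then evaluated by the Deuring/Eichler class-number formula for embeddings of imaginary quadratic orders into the maximal orders of $B_{p,\infty}$, which yields $\gamma(\ell)=\tfrac12H(4\ell)$.

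\emph{Part (2)(a), conclusion.} Now apply Bass--Serre theory once more. After barycentric subdivision $\scrG_1$ acts on $\calT$ without inversions, the quotient being $\Lambda_\ell^{\rm ud}(1)$ with a vertex carrying the group $\ZZ/2$ attached at the midpoint of each of the $\gamma(\ell)$ self-dual loops; hence $\scrG_1\cong F_{R-\gamma(\ell)}\ast(\ZZ/2)^{\ast\gamma(\ell)}$, where $R$ is the rank of the free group $\pi_1(\Lambda_\ell(1),E)=\pi_1(\Lambda_\ell^{\rm ud}(1),E)$. A spanning tree of $\Lambda_\ell^{\rm ud}(1)$ exhibits $\scrC_1,\dots,\scrC_{\gamma(\ell)}$ as part of a free basis of $\pi_1(\Lambda_\ell(1),E)$, and the canonical surjection $\pi_1(\Lambda_\ell(1),E)\twoheadrightarrow\scrG_1$ (a closed walk goes to its associated endomorphism, modulo $\{\pm\ell^{\ZZ}\}$) sends each $\scrC_i^{2}$ to $1$ because $\widetilde{\scrC_i^{2}}\in\{\pm\ell^{\ZZ}\}$; comparing the two descriptions of $\scrG_1$ and using that $F_{R-\gamma(\ell)}\ast(\ZZ/2)^{\ast\gamma(\ell)}$ is Hopfian (being virtually free, hence residually finite) shows the kernel is exactly $\langle\!\langle\scrC_1^{2},\dots,\scrC_{\gamma(\ell)}^{2}\rangle\!\rangle$. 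Finally $R=1-\chi(\Lambda_\ell^{\rm ud}(1))$ is computed from $\#(\text{vertices})=(p-1)/12$ (Eichler's mass formula) and $\#(\text{edges})=\tfrac{(\ell+1)(p-1)}{24}+\tfrac{\gamma(\ell)}{2}$ ($(\ell+1)$-regularity together with the contribution of the self-dual loops), giving $R=1+\tfrac{\gamma(\ell)}{2}+\tfrac{(\ell-1)(p-1)}{24}$; killing the torsion of $\scrG_1$ leaves a free quotient of rank $R-\gamma(\ell)=1-\tfrac{\gamma(\ell)}{2}+\tfrac{(\ell-1)(p-1)}{24}$.

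\emph{Part (2)(b).} By (1), $\scrG_N$ has finite index in $\scrG_1$; and $\scrG_1$ acts cocompactly on the $(\ell+1)$-regular tree $\calT$ with finite stabilizers, so it is virtually free, infinite, and not virtually cyclic (as $\ell+1\ge3$), hence contains a non-abelian free subgroup $F$ (by the Tits alternative, or by ping-pong on $\calT$). Then $\scrG_N\cap F$ has finite index in $F$ and so is free of rank $\ge2$; fixing a free basis $a,b$ of a rank-$2$ free subgroup, the elements $a,\,bab^{-1},\,\dots,\,b^{k-1}ab^{-(k-1)}$ pairwise fail to commute for every $k$, and their preimages under $\scrH_N\twoheadrightarrow\scrG_N$ are pairwise non-commuting in $\scrH_N$.

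\emph{Main obstacle.} The crux is part (2)(a): one must make the Bass--Serre dictionary between closed walks in $\Lambda_\ell(1)$ and the tree action precise enough to pin down $\ker\big(\pi_1(\Lambda_\ell(1),E)\to\scrG_1\big)$, and then carry out the class-number bookkeeping --- reconciling the multiplicity conventions in the definition of $\Lambda_\ell^{\rm ud}(1)$ with the local embedding numbers at $p$ and $\ell$ --- so as to arrive at the clean value $\tfrac12H(4\ell)$ and at the stated rank.
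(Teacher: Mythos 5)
Your proposal is correct in substance, but it takes a genuinely different route from the paper. The paper stays entirely combinatorial: finite generation of $\scrH_N$ is proved by exhibiting an explicit generating set attached to closed labeled walks of bounded length (via a backtracking lemma), finite index via $\scrH_1/\scrH_N^1 \hookrightarrow \GL_2(\ZZ/N\ZZ)$; the presentation of $\scrG_1$ is obtained by mapping closed walks in the edge-doubled graph to endomorphisms, identifying labels of self-dual loops with embeddings of $\ZZ[\sqrt{-\ell}]$ (whence $\gamma(\ell)=\tfrac12 H(4\ell)$ by Gross), with injectivity left to the reader (a composition of $\ell$-isogenies is a scalar only if it collapses into nested pairs $\pm\lambda\circ\lambda^\vee$); and for (2)(b) the paper builds explicit pairwise non-commuting $\tau_i\in\scrH_N$ from optimal embeddings of infinitely many imaginary quadratic fields (citing [EOY]). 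You instead transport everything to the Bruhat--Tits tree of $\PGL_2(\QQ_\ell)$, identify $\scrG_1\cong F_{R-\gamma(\ell)}\ast(\ZZ/2)^{\ast\gamma(\ell)}$ by Bass--Serre theory (inversions $=$ trace-zero degree-$\ell$ endomorphisms $=$ self-dual loops), and then pin down the kernel of the canonical surjection by Hopficity of finitely generated virtually free groups --- a clean substitute for the injectivity step the paper omits; for (2)(b) you use virtual freeness to find a nonabelian free subgroup and conjugates of a basis element. Both arguments work; note, though, that the paper's explicit $\tau_i$ and its bounded-length monoid generators of $\scrH_N^+$ are reused later (Zariski closure, definition of the measure $\mu$), which your softer arguments do not directly supply.

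Two small inaccuracies to repair. First, vertex stabilizers in $\scrH_1$ are not finite: the central $\pm\ell^{\ZZ}$ acts trivially on the tree; the finite-stabilizer statement holds for $\scrG_1$, and $\scrH_1$ is then finitely generated as a central extension of $\scrG_1$ by $\pm\ell^{\ZZ}$. Second, in the subdivided quotient each inverted edge orbit becomes a pendant edge from the image vertex to a new $\ZZ/2$-vertex (the self-dual loop is folded in half), not a loop through a marked midpoint; your stated conclusion $F_{R-\gamma(\ell)}\ast(\ZZ/2)^{\ast\gamma(\ell)}$ is the correct one, but the description of the quotient graph of groups should be corrected accordingly, since retaining the loop would give rank $R$ instead of $R-\gamma(\ell)$ and contradict the final formula.
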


\begin{proof} To show that the groups $\scrG_N$ are finitely generated, we will produce a finite set of generators. First, we prove a lemma. 

\begin{lem}\label{lemma:backtrack} Let $\uE_1=(E_1,P_1)$ and $\uE_2=(E_2,P_2)$ be vertices of $\Lambda_\ell(N)$. Let $\lambda\colon\uE_1 \arr \uE_2$ be an isogeny of degree $\ell^m$ for some $m>0$. There is an isogeny $$\eta_\lambda\colon\uE_2 \arr \uE_1$$ of degree $\ell^a\leq \ell^{m+2(\alpha(N)-1)}$ such that  $ \lambda \circ \eta_\lambda$ is multiplication by a power of $\ell$. In particular, for every labeled walk $\omega$ from $\uE_1$ to $\uE_2$ of length $m$, there is a labeled walk $\eta_\omega$ from $\uE_2$ to $\uE_1$ of length $a \leq m+2(\alpha(N)-1)$ such that $\tilde{\omega}\circ\tilde{\eta_\omega}\in \scrG_N$ is the identity.
\end{lem}

\begin{proof} Define $\eta_\lambda:=\ell^{x}\lambda^\vee$, where $x$ is the smallest non-negative integer for which we have  $\alpha(N)|(m+x)$. We have $\deg(\eta_\lambda)=\ell^{m+2x}  \leq  \ell^{m+2(\alpha(N)-1)}$. We also have $\eta_\lambda\circ \lambda=\ell^{x+m}$, and  $\eta_\lambda(P_2)=\ell^{x}(\lambda^\vee(P_2))=\ell^{m+x}P_1 = P_1$, as desired.

\end{proof}

We now prove the finite generation of $\scrG_N=\scrH_G(\uE)$, and deduce the finite generation of $\scrH_N$. 
Let $v(N)$ denote the number of vertices in $\Lambda_\ell(N)$. 
We prove that the finite set 
\[
Gen=\{\tilde{\omega} :   \omega\ {\rm closed\ labeled\ walk\ starting\ at}\ E\ {\rm of\ length\ }\leq 2\alpha(N)+2v(N)-3\}
\]
generates $\scrG_N$.  Let $\omega\in \Omega(\Lambda_\ell(N),\uE)$ be comprised of vertices $E_1=E,E_2,...,E_n,E_{n+1}=E$, and arrows $h_i$ from $E_i$ to $E_{i+1}$, each equipped with a label. For each $2 \leq i\leq n$, let  $\omega_i$ denote a labeled walk from $E_1$ to $E_i$ of length at most $v(N)-1$ (these exist since $\Lambda_\ell(N)$ is connected). Then, $\tilde{\omega}=\tilde{h}_n \cdots \tilde{h}_1$ can be written as a product
\[
\tilde{\omega}=(\widetilde{h_n\omega_n})(\widetilde{\eta_{\omega_n}h_{n-1}\omega_{n-1}}) ...(\widetilde{\eta_{\omega_3}h_2\omega_2})(\widetilde{\eta_{\omega_2}h_1}),
\]
in which each factor is in $Gen$. This proves that $\scrG_N$ is finitely generated. As the kernel of $\scrH_N \arr \scrG_N$ is finitely generated, also $\scrH_N$ is finitely generated. (The kernel is generated by $\ell^{\alpha(N)}$ and $-\ell^{\beta(N)}$, where $\beta(N)$ is the least non-negative integer such that $\ell^{\beta(N)} \equiv -1 \pmod{N}$; if no such $\beta(N)$ exists, the kernel is generated by $\ell^{\alpha(N)}$ alone.)

 Denote by $\scrH_N^1$ the subgroup of $\scrH_1$ composed of elements whose action on the $N$-torsion points of $E$ is trivial. Then, $\scrH_1 \supseteq \scrH_N \supseteq \scrH_N^1$, and to prove $\scrH_N$ is of finite index in $\scrH_1$, it is enough to show that $\scrH_N^1$ is of finite index in $\scrH_1$. But, $\scrH_1/\scrH_N^1\injects \GL_2(\ZZ/N\ZZ)$ and the assertion follows.

\medskip

 Next, we prove that $\scrH_N$ has an arbitrarily large subset of pairwise non-commuting elements. Let $n$ be a positive integer. Let $\calR = \End(E)$, and let $d_1, \dots, d_n$ be large enough distinct primes such that $\ell$ is split and $p$ is inert in each of the fields $K_i = \QQ(\sqrt{-d_i})$. By \cite{EOY} Theorem 1.4 and the discussion preceding it, each field $K_i$ has an optimal embedding $K_i \arr B_{p, \infty}$ relative to $\calR$; that is, the image of $\calO_{K_i}$ is contained in $\calR$. The images of the fields $K_i$ do not commute. 

As $\ell$ is split, in each field $K_i$, we have a decomposition into primes ideals $\ell\calO_{K_i} = \scrL_{i, 1} \scrL_{i, 2}$. Let~$r$ be an integer divisible by the class numbers of $K_1, \dots, K_n$ and let $s_N = \sharp \GL_2(\ZZ/N\ZZ)$. Then, $\scrL_i^r = (t_i)$, and $\tau_i := t_i^{s_N}$  have the following properties: (i) $\tau_i\in \calR$; (ii) $K_i = \QQ(\tau_i)$, and hence $\tau_i$ and $\tau_j$ do not commute for $i \neq j$; (iii) $\tau_i \in \scrH_N$ as $t_i \in \scrH_1$ and $t_i$ acts by an element of $\GL_2(\ZZ/N\ZZ)$ on $E[N]$. This provides us with $n$ pairwise non-commuting elements of $\scrH_N$. In addition, since commutators have trace $0$ and the kernel of $\scrH_N \arr \scrG_N$ consists of non-zero scalars, the images of the $\tau_i$ do not commute in $\scrG_N$ either.

 \medskip 
 Finally, 
 we assume $p \equiv 1 \pmod{12}$, and consider the graph $\Lambda_\ell^{\rm ud}(1)$, which has $(p-1)/12$ vertices. 
The fundamental group $\pi_1(\Lambda_\ell^{\rm ud}(1), E)$ can be described the following way. Let $\Lambda_\ell^{\rm en}(1)$ be the (``enlarged") graph with the same vertices as $\Lambda_\ell^{\rm ud}(1)$, but for every edge $e$ in $\Lambda_\ell^{\rm ud}(1)$ introduce two corresponding edges $e^\epsilon, \epsilon \in \{+, -\}$ in $\Lambda_\ell^{\rm en}(1)$, thereby getting a directed connected graph. Introduce an equivalence relation on the set $\Omega(\Lambda_\ell^{\rm en}(1), E)$ of closed directed walks as the one generated by basic equivalences \[\omega_1\omega_2 \sim \omega_1e^\epsilon e^{-\epsilon} \omega_2.\] The set of equivalence classes forms a group, naturally isomorphic to $\pi_1(\Lambda_\ell^{\rm ud}(1), E)$. Note that we have graph morphisms 
\[\Lambda_\ell^{\rm en}(1) \arr \Lambda_\ell(1) \arr \Lambda_\ell^{\rm ud}(1).\] 
In fact, the morphism $\Lambda_\ell^{\rm en}(1) \arr \Lambda_\ell(1)$ is a bijection on vertices and all edges except for \textit{self-dual edges} -- namely, edges that correspond to loops with labels $\pm \lambda$ such that $\pm \lambda = \pm \lambda^\vee$ (i.e., the loops of multiplicity~$1$ in $\Lambda_\ell^{\rm ud}(1)$). There is a natural map 
\[ \Omega(\Lambda_\ell^{\rm en}(1), E) \arr \scrG_1.\]
Indeed, each edge $e^\epsilon$ inherits a set of labels $\pm \lambda$ from $\Lambda_\ell(1)$, and so for each walk we can associate the composition of the labels that gives a well defined element in $\scrG_1$. Equivalent elements of $ \Omega(\Lambda_\ell^{\rm en}(1), E)$ have the same image. We note that if $e^\epsilon$ corresponds to a self-dual loop in $\Lambda_\ell(1)$ then we get $(e^\epsilon)^2=1$ in $\scrG_1$. 

Define for every loop $c_i$ of multiplicity~$1$ in $\Lambda_\ell(1)$ an element of $\pi_1(\Lambda_\ell^{\rm ud}(1), E)$ by taking any path $\omega_i$ from $E$ to the vertex of the loop $c_i$, and letting $\scrC_i = \omega_i^\vee c_i \omega_i$. We get a well-defined homomorphism
\[\pi_1(\Lambda_\ell(1), E)/\langle\!\!\langle \scrC_1^2, \dots, \scrC_{\gamma(\ell)}^2 \rangle\!\!\rangle \arr \scrG_1.\]
It is easy to check using Lemma~\ref{lemma:walkisogeny} that this is a surjective homomorphism. The injectivity amounts to checking that a composition of $\ell$-isogenies $f_n \circ \dots \circ f_1$ is, up to a sign, multiplication by a power of $\ell$ if and only if it is composed of nested pairs of $\ell$-isogenies $\pm\lambda\circ \lambda^\vee$. To see this last statement, consider a composition of $\ell$-isogenies $f_n \circ \dots \circ f_1$ which is equal to $\ell^{n/2}$, say 
\[ \xymatrix{E = E_0 \ar[r]^(.62){f_1} & E_1 \ar[r]^{f_2}  & \dots  \ar[r]^{f_n}& E_n}.\]
Let $i+1$ be the minimal integer such that $\ell$ divides $f_{i+1} \circ \dots \circ f_1$. That is, $\Ker(f_{i+1} \circ \dots \circ f_1) \cong \ZZ/\ell^i \ZZ \times \ZZ/\ell \ZZ$ and, under this isomorphism, $\Ker(f_{j} \circ \dots \circ f_1) = \ZZ/\ell^j \ZZ \times \{ 0 \}$ for $j \leq i$. It follows that $\Ker(f_{i+1} \circ f_i) \cong (\ZZ/\ell \ZZ)^2$, which implies that $E_{i+1} \cong E_{i-1}$ and $f_{i+1} \circ f_i$ is equal to the multiplication-by-$\ell$ map up to an automorphism of $E_{i+1}$, namely, up to $\pm 1$ due to our assumption on $p$. Therefore, $f_{i+1} = \pm f_i^\vee$.

A label of a self-dual loop at a vertex $E^\prime$ corresponds to an embedding $\ZZ[\sqrt{-\ell}]\injects \End(E^\prime)$. By \cite[\S1]{GrossHeights}, there are $H(4\ell)$ such embeddings. Therefore 
the number of self-dual loops of $\Lambda_\ell^{\rm ud}(1)$ is $\gamma(\ell)$.
Hence, the group $\pi_1(\Lambda_\ell^{\rm ud}(1), E)$ is a free group of rank $1+ \frac{\gamma(\ell)}{2} + \frac{(\ell-1) \cdot (p-1)}{24}$ and so $\scrG_1$ has a quotient that is a free group of rank $1- \frac{\gamma(\ell)}{2} + \frac{(\ell-1) \cdot (p-1)}{24}$.
\end{proof}

\subsection{The closure of $\scrH_N^+$, $\scrH_N$ and $\scrG_N$} Our purpose is to determine the $p$-adic and Zariski closures of the monoid $\scrH_N^+$ in $\scrR^\times$, and the same for $\scrG_N$. We will use $\HH, \GG$, etc. to denote $p$-adic closures and $H, G$, etc. to denote Zariski closures.

We introduce the following submonoid of $\scrH_N^+$:
\[ \scrH_N^{1, +} = \{ f \in \End(E): f \equiv 1 \!\!\!\pmod{N}, \deg(f)\in \langle \ell^{\alpha(N)} \rangle \},\]
and let $\scrH_N^1$ be the minimal group containing $\scrH_N^{1, +}$ (it already appeared in the proof of Proposition~\ref{prop: properties of scrH_n}).
Denote by $\HH_N^+, \HH_N^{1, +}$ the $p$-adic closure of $\scrH_N^+$ and $\scrH_N^{1,+}$ in $\scrR^\times$, respectively.
Let $\scrL_N$  be the $p$-adic closure of $\langle \ell^{\alpha(N)}\rangle$ in $\scrR^\times$. Note that $\scrL_N$ is also the $p$-adic closure of $\{ \ell^{r\alpha(N)}: r = 1, 2, 3, \dots\}$
\begin{prop} $\HH_N^{1, +} = \{ x\in \scrR^\times: {\rm Nm}(x) \in \scrL_N\}$.
\end{prop}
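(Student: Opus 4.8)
One inclusion is immediate: the reduced norm ${\rm Nm}\colon\scrR^\times\to\ZZ_p^\times$ is continuous, it carries the monoid $\scrH_N^{1,+}$ into $\langle\ell^{\alpha(N)}\rangle$ (since ${\rm Nm}=\deg$ on endomorphisms and $\deg f\in\langle\ell^{\alpha(N)}\rangle$ by definition), and ${\rm Nm}^{-1}(\scrL_N)$ is closed; hence $\HH_N^{1,+}\subseteq\{x\in\scrR^\times:{\rm Nm}(x)\in\scrL_N\}$. Before attacking the reverse inclusion I record that, since $\scrR^\times$ is profinite (hence compact), the closed submonoid $\HH_N^{1,+}$ is automatically a closed \emph{subgroup}; in particular it contains the closure $\HH_N^1$ of the group $\scrH_N^1$ generated by $\scrH_N^{1,+}$, so $\HH_N^{1,+}=\HH_N^1$. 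As ${\rm Nm}$ is surjective with kernel $\scrR^{\times,1}:=\{x\in\scrR^\times:{\rm Nm}(x)=1\}$, the target set is the full preimage ${\rm Nm}^{-1}(\scrL_N)$, and it suffices to prove (i) $\scrR^{\times,1}\subseteq\HH_N^1$ and (ii) ${\rm Nm}(\HH_N^1)=\scrL_N$: granting these, any $x$ with ${\rm Nm}(x)=\beta\in\scrL_N$ can be written $x=(xh^{-1})h$ with $h\in\HH_N^1$ of norm $\beta$ (by (ii)) and $xh^{-1}\in\scrR^{\times,1}\subseteq\HH_N^1$ (by (i)), so $x\in\HH_N^1$. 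The engine for both (i) and (ii) is strong approximation for the simply connected group $G=B_{p,\infty}^{1}$ relative to the place $\ell$, used exactly as in the proof of Theorem~\ref{thm: properties of the graphs} and relying on $B_{p,\infty}\otimes\QQ_\ell\cong M_2(\QQ_\ell)$, so that $G(\QQ_\ell)\cong\SL_2(\QQ_\ell)$ is noncompact.

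For (i): fix $x_0\in\scrR^{\times,1}$ and $M\geq 1$. Strong approximation produces $y\in G(\QQ)=B_{p,\infty}^{1}$ which is integral at every finite place $u\neq\ell$, satisfies $y\equiv x_0\pmod{p^M\scrR}$, and satisfies $y\equiv 1\pmod{N\calR}$. Such $y$ lies in $\calR[\tfrac1\ell]$, has reduced norm $1$, and acts trivially on $E[N]$, so $y\in\scrH_N^1\subseteq\HH_N^1$; letting $M\to\infty$ gives $x_0\in\HH_N^1$.

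For (ii): the inclusion $\subseteq$ is already noted, so I must show ${\rm Nm}(\HH_N^1)$ topologically generates $\scrL_N$. The element $\ell^{\alpha(N)}\cdot{\rm id}$ lies in $\scrH_N^{1,+}$ and has norm $\ell^{2\alpha(N)}$. By Theorem~\ref{thm: properties of the graphs}(2) the graph $\Lambda_\ell(1)$ is not bipartite, so there is $\gamma\in\End(E)$ with $\deg\gamma=\ell^r$ for some odd $r$; fix $E[N]\cong(\ZZ/N\ZZ)^2$ and let $\gamma_N\in\GL_2(\ZZ/N\ZZ)$ be the induced matrix, of determinant $\ell^r$. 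A second application of strong approximation (it also shows that $\calR[\tfrac1\ell]^{1}$ surjects onto $\SL_2(\ZZ/N\ZZ)$) gives $u\in\calR[\tfrac1\ell]^{1}$ whose image in $\GL_2(\ZZ/N\ZZ)$ is $\gamma_N^{-1}\,{\rm diag}(\ell^r,1)$, an element of determinant $1$. Then $g:=u\gamma$ has norm $\ell^r$ and acts on $E[N]$ by ${\rm diag}(\ell^r,1)$, whose order is $d=\alpha(N)/\gcd(r,\alpha(N))$, so $g^d\in\scrH_N^1$ with ${\rm Nm}(g^d)=\ell^{rd}=\ell^{(r/\gcd(r,\alpha(N)))\,\alpha(N)}$, an \emph{odd} power of $\ell^{\alpha(N)}$. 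A closed subgroup of $\ZZ_p^\times$ containing $\ell^{2\alpha(N)}$ and an odd power of $\ell^{\alpha(N)}$ contains $\ell^{\alpha(N)}$, hence contains $\overline{\langle\ell^{\alpha(N)}\rangle}=\scrL_N$; thus ${\rm Nm}(\HH_N^1)=\scrL_N$, and combining (i), (ii) with the first paragraph closes the argument.

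The main obstacle is the parity point inside (ii): the naive generator $\ell^{\alpha(N)}\cdot{\rm id}$ only witnesses $\ell^{2\alpha(N)}$ as a norm, so a priori $\overline{{\rm Nm}(\scrH_N^{1,+})}$ could be the index-two subgroup $\overline{\langle\ell^{2\alpha(N)}\rangle}\subsetneq\scrL_N$, and one genuinely needs an endomorphism congruent to $1$ modulo $N$ whose degree is an odd power of $\ell^{\alpha(N)}$. This is exactly where non-bipartiteness of $\Lambda_\ell(1)$ and the congruence-correction via strong approximation enter. By contrast step (i), although it is the conceptual heart of the statement, is routine once strong approximation is available.
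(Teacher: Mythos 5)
Your argument is correct, but it follows a genuinely different route from the paper. The paper proves the hard inclusion in one stroke: it applies Sardari's optimal strong approximation theorem for the rank-$4$ quadratic form $\deg$ on $\calR=\End(E)$, checking local solvability at every place (the only nontrivial check being the surjectivity of the norm $1+p^MW(\FF_q)\arr 1+p^M\ZZ_p$ at $p$), and thereby approximates any $x\in\scrR^\times$ with ${\rm Nm}(x)\in\scrL_N$ directly by honest endomorphisms in $\scrH_N^{1,+}$ of degree $\ell^{r\alpha(N)}$ satisfying the congruences mod $N$ and mod $p^M$. You instead observe that a closed submonoid of the compact group $\scrR^\times$ is a group, so $\HH_N^{1,+}=\HH_N^1$, and then split the problem into the norm-one part and the norm image: classical strong approximation for $B_{p,\infty}^1$ relative to $\ell$ (exactly the tool the paper already uses to prove connectedness and non-bipartiteness of $\Lambda_\ell(N)$) gives $\scrR^{\times,1}\subseteq\HH_N^1$, and the parity obstruction in ${\rm Nm}(\HH_N^1)$ is killed by combining $\ell^{\alpha(N)}$ with an endomorphism of odd $\ell$-power degree (non-bipartiteness of $\Lambda_\ell(1)$), corrected mod $N$ by a norm-one element. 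Your approach is softer and stays within machinery the paper has already set up, at the cost of the group-theoretic reductions; the paper's approach is more direct and yields the slightly finer statement that elements of the right-hand side are limits of elements of the monoid $\scrH_N^{1,+}$ itself (which, by your compactness remark, is in any case equivalent for the closure). Two small points you gloss over and should record: (a) the elements you produce ($y$ in step (i), $g^d$ in step (ii)) are in $\calR[1/\ell]$, not in $\End(E)$, so membership in $\scrH_N^1$ needs the observation that any $f\in\calR[1/\ell]$ acting trivially on $E[N]$ with ${\rm Nm}(f)\in\ell^{\alpha(N)\ZZ}$ becomes an element of $\scrH_N^{1,+}$ after multiplication by $\ell^{a\alpha(N)}$ for $a\gg 0$ (the analogue of the paper's lemma describing $\scrH_N$); (b) $g=u\gamma$ acts on $E[N]$ by a conjugate of ${\rm diag}(\ell^r,1)$ rather than by that matrix itself, which is harmless since only its order $d=\alpha(N)/\gcd(r,\alpha(N))$ is used.
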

\begin{proof} Let $A =  \{ x\in \scrR^\times: {\rm Nm}(x) \in \scrL_N\}$. 
Clearly, $\scrH_N^{1, +} \subset A$. Since $A$ is a compact group, also $\HH_N^{1, +}\subseteq A$.

For the reverse inclusion we use Theorem~1.2 of \cite{Sar}. Write $\calR = \End(E)\subset B_{p, \infty}$ and identify $\calR$ with $\ZZ^4$ so that $1 = (1, 0, 0, 0)$; let $Q$ be the positive definite quadratic form corresponding to the norm of $B_{p, \infty}$. Then $\scrR = \ZZ_p^4$. Let $\underline{\alpha} = (\alpha_1, \dots, \alpha_4) \in A$. We need to show that for all $M \geq 0$, $\exists (\beta_1, \dots, \beta_4) \in \ZZ^4$ such that:
\begin{itemize}
\item $Q(\beta_1, \dots, \beta_4) = \ell^{r\alpha(N)}$ for $r>0$ such that  $\ell^{r\alpha(N)} \equiv {\rm Nm}(\underline{\alpha}) \pmod{p^{M}}$, 
\item $(\beta_1, \dots, \beta_4) \equiv (1, 0, 0, 0) \pmod{N}$, and
\item $(\beta_1, \dots, \beta_4) \equiv  (\alpha_1, \dots, \alpha_4) \pmod{p^M}$.
\end{itemize}
Note that there are always arbitrary large $r$ satisfying the congruence in the first condition. For convenience we take $M \geq 2$.

Let $m = Np^M$ and let $(\lambda_1, \dots, \lambda_4) \in (\ZZ/m\ZZ)^4$ correspond to the above last two congruence conditions. 
By the result cited above, to get a solution (for $r\gg_M 0$), it suffices to show that for every prime $u$ there is a  $(x_1, \dots, x_4) \in \ZZ_u^4$ such that 
$Q(x_1, \dots, x_4) = \ell^{r\alpha(N)}$ and $(x_1, \dots, x_4) \equiv (\lambda_1, \dots, \lambda_4)\pmod{u^{\ord_u(m)}}$.

For $u\nmid m$, there is no congruence condition, $(\ZZ_u^4, Q) \cong (M_2(\ZZ_u), \det)$, and the matrix $\left(\begin{smallmatrix}1 & \\ & \ell^{r\alpha(N)}
\end{smallmatrix}\right)$ is a solution. For $u \vert N$, we still have $(\ZZ_u^4, Q) \cong (M_2(\ZZ_u), \det)$, and the matrix $\left(\begin{smallmatrix}1 & \\ & \ell^{r\alpha(N)}
\end{smallmatrix}\right)$ is still a solution because it satisfies the congruence condition $\left(\begin{smallmatrix}1 & \\ & \ell^{r\alpha(N)}
\end{smallmatrix}\right) \equiv \left(\begin{smallmatrix}1 & \\ & 1
\end{smallmatrix}\right) $ mod ${u^{\ord_u(m)}}$.

For $u = p$ we want a solution in $\ZZ_p^4$, congruent to $(\alpha_1, \dots, \alpha_4) \pmod{p^M}$. We have now $(\scrR, Q) \cong (\{\left( \begin{smallmatrix} a& pb^\sigma \\ b & a^\sigma \end{smallmatrix}\right): a, b \in W(\FF_q)\}
, \det)$. The element $\underline{\alpha}$ obviously satisfies the congruence condition, is represented by a matrix $\left( \begin{smallmatrix} a_1& pb_1^\sigma \\ b_1 & a_1^\sigma \end{smallmatrix}\right)$,  and has determinant ${\rm Nm} (\underline{\alpha})$ that is congruent to $\ell^{r\alpha(N)}$ modulo $p^{M}$. We want to find $\tilde a_1$ such that $\tilde a_1\equiv a_1 \pmod{p^M}$ and $\det\left( \begin{smallmatrix} \tilde a_1& pb_1^\sigma \\ b_1 & \tilde a_1^\sigma \end{smallmatrix}\right) = \ell^{r\alpha(N)}$.

We claim that the norm map
\[ a_1 + p^MW(\FF_q) \arr {\rm Nm}(a_1) + p^M\ZZ_p\]
is surjective. As $a_1$ is a unit, $a_1 + p^MW(\FF_q) = a_1(1 + p^MW(\FF_q))$ and it is enough to prove that ${\rm Nm}$ is a surjection $1 + p^MW(\FF_q) \arr 1 + p^M\ZZ_p$.

Applying the logarithm to both sides we are reduced to showing $tr\colon  p^{M+1}W(\FF_q) \arr p^{M+1}\ZZ_p$ is surjective, where $tr$ denotes the trace map $tr_{W(\FF_q)/\ZZ_p}$; this is easily verified. Finally, since $\ell^{r\alpha(N)} + pb_1b_1^\sigma \in {\rm Nm}(a_1) + p^M\ZZ_p$, there is an $\tilde a_1\in a_1 + p^MW(\FF_q)$ whose norm is $\ell^{r\alpha(N)} + pb_1b_1^\sigma$.
\end{proof}

\begin{cor} $\HH_N^{1, +}$ is a group, equal to the $p$-adic closure of $\scrH_N^1$. Denote it henceforth $\HH_N^1$. The quotient group $\scrR^\times/\HH_N^1$ is a finite group isomorphic to $\ZZ_p^\times/\scrL_N$.
\end{cor}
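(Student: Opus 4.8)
The plan is to obtain this corollary formally from the preceding proposition, which identifies $\HH_N^{1,+}$ with $\{x\in\scrR^\times : {\rm Nm}(x)\in\scrL_N\}$. I would first record two structural facts. (a) The reduced norm ${\rm Nm}\colon\scrR^\times\arr\ZZ_p^\times$ is a continuous \emph{surjective} group homomorphism: multiplicativity of the norm form gives the homomorphism property and forces the image into $\ZZ_p^\times$, while surjectivity is read off the model $\left(\begin{smallmatrix} a & pb^\sigma \\ b & a^\sigma\end{smallmatrix}\right)$ of $\scrR$ by setting $b=0$, which presents ${\rm Nm}$ on that locus as the norm $W(\FF_q)^\times\arr\ZZ_p^\times$ of the unramified quadratic extension, surjective by local class field theory exactly as used in the proof of the proposition. (b) $\scrL_N$, being the $p$-adic closure of the subgroup $\langle\ell^{\alpha(N)}\rangle$ of the topological group $\ZZ_p^\times$, is a closed subgroup of $\ZZ_p^\times$.

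Given (a) and (b), the argument runs as follows. Since $\HH_N^{1,+}={\rm Nm}^{-1}(\scrL_N)$ is the preimage of a closed subgroup under a continuous homomorphism, it is a closed subgroup of $\scrR^\times$; in particular it is a group, which is the first assertion. For the identification with the $p$-adic closure of $\scrH_N^1$, note that $\scrH_N^{1,+}\subseteq\scrH_N^1$ gives $\HH_N^{1,+}\subseteq\overline{\scrH_N^1}$, while conversely $\HH_N^{1,+}$ is a \emph{closed} subgroup containing $\scrH_N^{1,+}$, hence contains the subgroup that set generates, namely $\scrH_N^1$, and therefore also $\overline{\scrH_N^1}$. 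So the two closures coincide; call the common group $\HH_N^1$. It is normal in $\scrR^\times$, being the kernel of the composite $\scrR^\times\xrightarrow{{\rm Nm}}\ZZ_p^\times\twoheadrightarrow\ZZ_p^\times/\scrL_N$, which is surjective by (a). The first isomorphism theorem then yields $\scrR^\times/\HH_N^1\cong\ZZ_p^\times/\scrL_N$.

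Finally, I would check that $\ZZ_p^\times/\scrL_N$ is finite. Since $\ell^{\alpha(N)}$ is a rational integer $>1$, its powers are pairwise distinct in $\ZZ_p$, so it has infinite order in $\ZZ_p^\times$; thus $\scrL_N$ is an infinite procyclic closed subgroup of $\ZZ_p^\times$, and such a subgroup is open. (Concretely: write $\ZZ_p^\times\cong\mu\times\Gamma$ with $\mu$ the finite torsion subgroup and $\Gamma\cong\ZZ_p$ the torsion-free part; if $\ell^{\alpha(N)}\leftrightarrow(\zeta,u)$ then $u\neq 0$, so $\scrL_N$ contains $\overline{\langle\,\ell^{\alpha(N)|\mu|}\,\rangle}=\{1\}\times\overline{\langle |\mu|u\rangle}$, and $\overline{\langle |\mu|u\rangle}$ is a nonzero closed subgroup of $\Gamma\cong\ZZ_p$, hence of finite index, forcing $[\ZZ_p^\times:\scrL_N]<\infty$.) This gives the finiteness and completes the argument.

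The substance of the corollary is entirely carried by the preceding proposition, so I do not expect a genuine obstacle. The only two points deserving a sentence of care are the surjectivity of the reduced norm — handled by the explicit local model together with surjectivity of the norm of the unramified quadratic extension — and the elementary fact that a rational integer greater than $1$ generates an open, hence finite-index, procyclic subgroup of $\ZZ_p^\times$, which yields the finiteness of $\ZZ_p^\times/\scrL_N$.
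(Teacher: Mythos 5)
Your argument is correct and is essentially the intended one: the paper states this corollary without proof as an immediate consequence of the preceding proposition, and your write-up simply supplies the implicit details (that ${\rm Nm}^{-1}(\scrL_N)$ is a closed subgroup, that the reduced norm $\scrR^\times \arr \ZZ_p^\times$ is surjective — a fact the paper itself invokes via local class field theory — and that $\scrL_N$ is open, hence of finite index, in $\ZZ_p^\times$). No gaps; the identification of $\HH_N^{1,+}$ with $\overline{\scrH_N^1}$ and the isomorphism $\scrR^\times/\HH_N^1 \cong \ZZ_p^\times/\scrL_N$ follow exactly as you say.
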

We have inclusions $\HH_1 = \HH_1^1 \supseteq \HH_N^+ \supseteq \HH_N^1$, and so $\HH_N^+ / \HH_N^1$ is a finite monoid contained in the group $\HH_1/\HH_N^1$, hence a group itself. 
\begin{cor}$\HH_N^+$ is a group, equal to the $p$-adic closure of $\scrH_N$. Denote it henceforth $\HH_N$. We have $\HH_N/\HH_N^1$ is a cyclic subgroup of $\scrL_1/\scrL_N \cong \ZZ/\alpha(N)\ZZ$.
\end{cor}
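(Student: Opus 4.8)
The plan is to derive the corollary formally from the previous corollary together with the inclusions $\HH_1 = \HH_1^1 \supseteq \HH_N^+ \supseteq \HH_N^1$ recorded just above it; the real work in this part of the paper has already been spent (on the Proposition, via Sarnak's theorem, and on identifying $\HH_N^1$), so here I do not expect a genuine obstacle.

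First I would observe that $\HH_N^+ = \overline{\scrH_N^+}$ is a closed \emph{submonoid} of $\scrR^\times$, since $\scrH_N^+$ is a submonoid and multiplication on $\scrR^\times$ is continuous. By the previous corollary $\HH_N^1$ is a closed subgroup of $\scrR^\times$, and it is in fact normal, being the kernel of $\scrR^\times \xrightarrow{{\rm Nm}} \ZZ_p^\times \twoheadrightarrow \ZZ_p^\times/\scrL_N$. Hence $\HH_N^1$ is normal in $\HH_N^+$, and the quotient monoid $\HH_N^+/\HH_N^1$ sits inside $\HH_1/\HH_N^1$; by the previous corollary at levels $1$ and $N$ the norm identifies $\HH_1/\HH_N^1$ with $\scrL_1/\scrL_N$, which is finite (it is compact and topologically generated by the image of $\ell$, an element of finite order since $\ell^{\alpha(N)}\in\scrL_N$). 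A submonoid of a finite group is a subgroup, so $\HH_N^+/\HH_N^1$ is a group; and since $\HH_N^1$ is a group, normal in $\HH_N^+$, with group quotient, $\HH_N^+$ is itself a group. (Concretely: given $x\in\HH_N^+$, its class has an inverse $\bar y$, so $xy\in\HH_N^1$; as $\HH_N^1$ is a group, $(xy)^{-1}\in\HH_N^1\subseteq\HH_N^+$, whence $x^{-1}=y(xy)^{-1}\in\HH_N^+$.)

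Next, that $\HH_N^+$ equals the $p$-adic closure of $\scrH_N$ becomes formal: $\scrH_N^+\subseteq\scrH_N$ gives $\HH_N^+\subseteq\overline{\scrH_N}$, while $\HH_N^+$ is a closed subgroup containing $\scrH_N^+$, hence contains the group $\scrH_N=\langle\scrH_N^+\rangle$ generated by $\scrH_N^+$, hence contains $\overline{\scrH_N}$. Thus $\HH_N^+=\overline{\scrH_N}=:\HH_N$.

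Finally, for the last assertion I would note that $\scrH_N^1\subseteq\scrH_N\subseteq\scrH_1$, so on taking closures $\HH_N^1\subseteq\HH_N\subseteq\HH_1$, all closed subgroups of $\scrR^\times$ with $\HH_N^1$ normal; hence $\HH_N/\HH_N^1$ is a subgroup of $\HH_1/\HH_N^1$, which the norm identifies with $\scrL_1/\scrL_N\cong\ZZ/\alpha(N)\ZZ$. A subgroup of a cyclic group is cyclic, so $\HH_N/\HH_N^1$ is cyclic; under these identifications it corresponds to $\overline{{\rm Nm}(\scrH_N)}/\scrL_N$, where ${\rm Nm}(\scrH_N)=\langle\ell^{g}\rangle$ with $g$ the gcd of the lengths of closed walks in $\Lambda_\ell(N)$ based at $\uE$. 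The one ingredient that is not pure formalism is the isomorphism $\HH_1/\HH_N^1\cong\scrL_1/\scrL_N\cong\ZZ/\alpha(N)\ZZ$, but its first half is the previous corollary read at level $1$, and its second half is an elementary computation inside the procyclic group $\overline{\langle\ell\rangle}\subseteq\ZZ_p^\times$.
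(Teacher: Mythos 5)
Your proposal is correct and follows essentially the paper's own route: the paper's entire argument is the remark preceding the corollary, namely that the inclusions $\HH_1=\HH_1^1\supseteq\HH_N^+\supseteq\HH_N^1$ make $\HH_N^+/\HH_N^1$ a finite submonoid of the group $\HH_1/\HH_N^1\cong\scrL_1/\scrL_N$, hence a group, with everything else (that $\HH_N^+=\overline{\scrH_N}$ and that the quotient is a cyclic subgroup) following formally exactly as you spell out. Your write-up just makes explicit the details the paper leaves implicit.
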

We are interested in the image of $\HH_N$ under the projection map $\Pi\colon \GL_2(\QQ_q) \arr \PGL_2(\QQ_q).$
Denote \[sat(\HH_N) = \ZZ_p^\times \HH_N = \Pi^{-1}\Pi(\HH_N) \cap \scrR^\times.\] 
The group $sat(\HH_N)$ is closed, hence $\Pi(\HH_N)$ is closed in $\PGL_2(\QQ_q)$ and equal to $\GG_N$.

\begin{prop} We have
\[ sat(\HH_N) = \scrR^\times_\square:=\left\{ \left(\begin{smallmatrix} a & pb^\sigma\\ b & a^\sigma
\end{smallmatrix}\right)\in \scrR^\times \colon  aa^\sigma - pbb^\sigma \in (\ZZ_p^\times)^2 \cdot \langle \ell \rangle\right\}.\]
Thus, $sat(\HH_N)$ has index at most $2$ in $\scrR^\times$ if $p$ is odd, and index at most $4$ if $p=2$; further,  $sat(\HH_N) = \scrR^\times$ if and only if $p$ is odd and $\ell$ is not a square modulo $p$.
\end{prop}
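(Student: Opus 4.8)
The plan is to show that $sat(\HH_N)$ is \emph{saturated with respect to the reduced norm}, so that identifying it reduces to identifying a subgroup of $\ZZ_p^\times$. Write ${\rm Nm}\colon \scrR^\times \arr \ZZ_p^\times$ for the reduced norm; in the chosen model ${\rm Nm}\left(\begin{smallmatrix} a & pb^\sigma \\ b & a^\sigma\end{smallmatrix}\right) = aa^\sigma - pbb^\sigma$, so $\scrR^\times_\square = {\rm Nm}^{-1}\!\left((\ZZ_p^\times)^2\langle\ell\rangle\right)$ by definition. Now $sat(\HH_N) = \ZZ_p^\times\HH_N$ contains $\HH_N \supseteq \HH_N^1 = \{x\in\scrR^\times : {\rm Nm}(x)\in\scrL_N\}$ (the corollary identifying $\HH_N^{1,+}$), and since $1\in\scrL_N$ this contains the norm-one group $\scrR^1 = \ker\!\left({\rm Nm}\colon\scrR^\times\arr\ZZ_p^\times\right)$. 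A subgroup containing the kernel of a homomorphism equals the preimage of its own image, so $sat(\HH_N) = {\rm Nm}^{-1}\!\left({\rm Nm}(sat(\HH_N))\right)$, and it suffices to prove
\[ {\rm Nm}(sat(\HH_N)) = (\ZZ_p^\times)^2\langle\ell\rangle . \]

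For the inclusion $\subseteq$: since ${\rm Nm}$ is multiplicative, $\ZZ_p^\times$ is central, and ${\rm Nm}(\ZZ_p^\times) = (\ZZ_p^\times)^2$, we get ${\rm Nm}(sat(\HH_N)) = (\ZZ_p^\times)^2\cdot{\rm Nm}(\HH_N)$. By the lemma describing $\scrH_N$, each of its elements has the shape $f_1/\ell^t$ with $f_1\in\End(E)$ of degree a power of $\ell$, so its reduced norm is a power of $\ell$; hence ${\rm Nm}(\scrH_N)\subseteq\langle\ell\rangle$. Since $(\ZZ_p^\times)^2$ has finite index in $\ZZ_p^\times$, the subgroup $(\ZZ_p^\times)^2\langle\ell\rangle$ is open, hence closed, so passing to $p$-adic closures gives ${\rm Nm}(\HH_N)\subseteq(\ZZ_p^\times)^2\langle\ell\rangle$, whence the inclusion. (This is the one place requiring care: $\langle\ell\rangle$ itself need \emph{not} be closed in $\ZZ_p^\times$, so the closure must be routed through the open overgroup $(\ZZ_p^\times)^2\langle\ell\rangle$.)

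For the reverse inclusion — the only genuinely new input — I would recall from the proof of Theorem~\ref{thm: properties of the graphs}(2) that at the fixed vertex $\uE=(E,P)$ there is a closed labeled walk $\omega$ of some \emph{odd} length $r$. Then $\tilde\omega\in\scrH_N^+\subseteq\scrH_N\subseteq sat(\HH_N)$ is an endomorphism of $E$ of degree $\ell^r$, whose reduced norm equals its degree, so ${\rm Nm}(\tilde\omega) = \ell^r$ with $r$ odd. Consequently
\[ \ell = \ell^r\cdot\left(\ell^{(1-r)/2}\right)^{2} \in {\rm Nm}(\scrH_N)\cdot(\ZZ_p^\times)^2 \subseteq {\rm Nm}(sat(\HH_N)), \]
and as ${\rm Nm}(sat(\HH_N))$ is a subgroup of $\ZZ_p^\times$ already containing $(\ZZ_p^\times)^2 = {\rm Nm}(\ZZ_p^\times)$, it contains all of $(\ZZ_p^\times)^2\langle\ell\rangle$. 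Combining this with the previous paragraph proves the displayed equality, hence $sat(\HH_N) = \scrR^\times_\square$.

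The numerical consequences are then immediate. Since the norm map $W(\FF_q)^\times\arr\ZZ_p^\times$ is surjective (local class field theory, as already invoked in the computation of $\HH_N^{1,+}$), we have ${\rm Nm}(\scrR^\times) = \ZZ_p^\times$; as both $\scrR^\times$ and $\scrR^\times_\square$ contain $\scrR^1$, this yields $[\scrR^\times : sat(\HH_N)] = [\ZZ_p^\times : (\ZZ_p^\times)^2\langle\ell\rangle]$, a divisor of $[\ZZ_p^\times : (\ZZ_p^\times)^2]$, which is $2$ for odd $p$ and $4$ for $p=2$. Finally $sat(\HH_N) = \scrR^\times$ if and only if $\ell\in(\ZZ_p^\times)^2$: for $p$ odd this holds, by Hensel's lemma, exactly when $\ell$ is a square modulo $p$, so $sat(\HH_N) = \scrR^\times$ precisely when $\ell$ is a non-square mod $p$; for $p=2$ the group $\ZZ_2^\times/(\ZZ_2^\times)^2\cong(\ZZ/2\ZZ)^2$ is not cyclic while the image of $\langle\ell\rangle$ is cyclic, so $sat(\HH_N)\ne\scrR^\times$ in that case. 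There is no hard step here: the substance is to notice that everything is controlled by the reduced norm (because $sat(\HH_N)\supseteq\scrR^1$), and that the non-bipartiteness of $\Lambda_\ell(N)$, already established, is exactly what forces $\langle\ell\rangle$ — and not merely $\scrL_N$ — into the norm image.
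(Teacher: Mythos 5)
Your proof is correct and is essentially the paper's own argument: both reduce the identification of $sat(\HH_N)$ to a computation of norm (determinant) images, using that $\HH_N^1=\{x\in\scrR^\times:{\rm Nm}(x)\in\scrL_N\}$ contains the norm-one group so that everything is the preimage of its norm image, and both invoke the non-bipartiteness of $\Lambda_\ell(N)$ (an endomorphism of odd $\ell$-power degree) to upgrade $\langle\ell^{\alpha(N)}\rangle$ to $\langle\ell\rangle$ modulo $(\ZZ_p^\times)^2$. One wording slip: the criterion should be $sat(\HH_N)=\scrR^\times$ iff $(\ZZ_p^\times)^2\langle\ell\rangle=\ZZ_p^\times$, i.e.\ for $p$ odd iff $\ell\notin(\ZZ_p^\times)^2$ --- not ``iff $\ell\in(\ZZ_p^\times)^2$'' as you wrote --- though the conclusions you then state are the correct ones.
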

\begin{proof} We first examine $sat(\HH_N^1)$. As $\scrR^\times /\HH_N^1 \cong \ZZ_p^\times/\scrL_N$ via the determinant map, we find that  $\scrR^\times /sat(\HH_N^1)$ is isomorphic to $\ZZ_p^\times/(\ZZ_p^\times)^2\langle \ell^{\alpha(N)} \rangle$. Since the graph $\Lambda_\ell(N)$ is not bipartite (Theorem~\ref{thm: properties of the graphs}), there is an element in $\scrH_N^+$ whose degree is an odd power of $\ell$. Consequently, $\scrR^\times/sat(\HH_N)\cong \ZZ_p^\times/(\ZZ_p^\times)^2\langle \ell \rangle$ and, in fact, $sat(\HH_N)$ is precisely the elements in $\scrR^\times$ whose determinant lies in $(\ZZ_p^\times)^2\langle \ell \rangle$.
\end{proof}

\begin{cor} \label{cor: description of G_N} We have $\GG_N = \Pi(\scrR^{\times}_{\square})$, which is independent of $N$.
\end{cor}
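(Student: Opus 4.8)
The plan is to show that this Corollary is an essentially formal consequence of the preceding Proposition, obtained by unwinding the definitions. Recall that $\GG_N$ denotes the $p$-adic closure of $\scrG_N$ in $\PGL_2(\QQ_q)$. The first step is to argue that no closure is actually needed: we showed in the discussion preceding the previous Proposition that $sat(\HH_N)$ is closed in $\scrR^\times$, hence $\Pi(\HH_N)$ is closed in $\PGL_2(\QQ_q)$ and equals $\scrG_N$; therefore $\GG_N = \scrG_N = \Pi(\HH_N)$. (If one wishes to be self-contained on this point, it suffices to note that $\scrR^\times_\square$ is closed in the compact group $\scrR^\times$, hence compact, so its continuous image under $\Pi$ is compact and in particular closed.)

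The second step is to pass from $\HH_N$ to $sat(\HH_N)$ without changing the image. Since $\ZZ_p^\times$ sits inside $\GL_2(\QQ_q)$ as the group of scalar matrices, which is exactly the kernel of $\Pi\colon \GL_2(\QQ_q) \arr \PGL_2(\QQ_q)$, we have $\Pi(\ZZ_p^\times \HH_N) = \Pi(\HH_N)$. By definition $sat(\HH_N) = \ZZ_p^\times \HH_N$, so $\Pi(sat(\HH_N)) = \Pi(\HH_N) = \GG_N$.

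The third step invokes the previous Proposition, which identifies $sat(\HH_N)$ with $\scrR^\times_\square$, the subgroup of $\scrR^\times$ cut out by the condition $aa^\sigma - pbb^\sigma \in (\ZZ_p^\times)^2\langle \ell\rangle$. This description involves only $p$ and $\ell$; it does not mention $N$. Combining the three steps yields $\GG_N = \Pi(sat(\HH_N)) = \Pi(\scrR^\times_\square)$, and the right-hand side is manifestly independent of $N$.

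There is no real obstacle here: all of the substantive work — strong approximation, the local solvability of the relevant quadratic equation at $p$ via the surjectivity of the norm and trace maps, and the non-bipartiteness of $\Lambda_\ell(N)$ that pins down the determinant condition to $(\ZZ_p^\times)^2\langle\ell\rangle$ rather than $(\ZZ_p^\times)^2\langle\ell^{\alpha(N)}\rangle$ — has already been carried out in the preceding results. The only point meriting an explicit sentence is the identification $\GG_N = \Pi(\HH_N)$, i.e. that $\Pi(\HH_N)$ is already closed, which I would justify by the compactness remark above.
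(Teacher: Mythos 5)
Your proposal is correct and takes essentially the same route as the paper, which states this corollary without a separate proof as an immediate consequence of the preceding proposition together with the remark that $sat(\HH_N)=\ZZ_p^\times\HH_N$ and that $\Pi(\HH_N)$ is closed. One small precision: $\scrG_N=\Pi(\scrH_N)$ is countable and hence not itself closed, so the intermediate identity should read $\GG_N=\overline{\scrG_N}=\Pi(\HH_N)$ (using that $\Pi(\HH_N)$ is compact, hence closed, and that $\Pi(\HH_N)=\Pi(\overline{\scrH_N})\subseteq\overline{\Pi(\scrH_N)}$ by continuity), and likewise $\ZZ_p^\times$ is contained in, rather than equal to, the kernel of $\Pi$; neither point affects your conclusion.
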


\begin{prop}\label{prop: Zariski closure} Let $H_N$ and $G_N$ be the Zariski closures of $\scrH_N, \scrG_N$ in $\GL_2(\QQ_q)$ and $\PGL_2(\QQ_q)$, respectively.  Then $H_N = \GL_2(\QQ_{q})$ and $G_N = \PGL_2(\QQ_{q})$. 
\end{prop}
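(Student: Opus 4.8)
The plan is to deduce Zariski density from the description of the $p$-adic closure $\HH_N$ obtained above. Recall that $\scrB^\times$ is the group of $\QQ_p$-points of $D^\times$, the algebraic group over $\QQ_p$ of units of the quaternion division algebra; it is an inner form of $\GL_2$, in particular connected and geometrically connected, and the fixed embedding $\scrB^\times \hookrightarrow \GL_2(\QQ_q)$ (under which $\scrR$ has the model $\left\{\left(\begin{smallmatrix} a & pb^\sigma\\ b & a^\sigma\end{smallmatrix}\right): a,b\in W(\FF_q)\right\}$) extends to an isomorphism $D^\times\times_{\QQ_p}\QQ_q\xrightarrow{\sim}\GL_{2,\QQ_q}$, and correspondingly the adjoint group of $D^\times$ base-changes to $\PGL_{2,\QQ_q}$. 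The first step is to observe that $\HH_N$ is open in $\scrB^\times$: it contains $\HH_N^1=\{x\in\scrR^\times:{\rm Nm}(x)\in\scrL_N\}$, which has finite index in the profinite group $\scrR^\times$ (the quotient being $\ZZ_p^\times/\scrL_N$), hence is open in $\scrR^\times$; and $\scrR^\times$ is in turn open in $\scrB^\times$.

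Next I would show that the Zariski closure $Z$ of $\scrH_N$ in $D^\times$ (an algebraic subgroup over $\QQ_p$) is all of $D^\times$. Since polynomial equations are $p$-adically continuous, $Z(\QQ_p)$ is closed in $\scrB^\times$ for the $p$-adic topology; it contains $\scrH_N$, hence it contains the $p$-adic closure $\HH_N$, which by the previous step has nonempty $p$-adic interior. Therefore $\dim Z=\dim D^\times=4$, and since $D^\times$ is connected this forces $Z=D^\times$; that is, $\scrH_N$ is Zariski dense in $D^\times$ over $\QQ_p$.

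To conclude, I would pass to $\QQ_q$. Since $\scrH_N\subseteq D^\times(\QQ_p)$, its Zariski closure in $D^\times_{\QQ_q}$ is stable under $\Gal(\QQ_q/\QQ_p)$, hence descends to a closed subscheme of $D^\times$ over $\QQ_p$ containing $\scrH_N$; by the previous paragraph that subscheme is $D^\times$, so $\scrH_N$ is already Zariski dense in $D^\times_{\QQ_q}=\GL_{2,\QQ_q}$, i.e.\ $H_N=\GL_2(\QQ_q)$. For $G_N$: the morphism $\GL_2\arr\PGL_2$ is a surjection of algebraic groups, images of algebraic subgroups under homomorphisms of algebraic groups are closed subgroups, and $\scrG_N$ is the image of $\scrH_N$; hence $G_N$, being the Zariski closure of $\scrG_N$, equals the image of $H_N$, which is $\PGL_{2,\QQ_q}$, i.e.\ $G_N=\PGL_2(\QQ_q)$.

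The only point needing any care is the interplay of the three topologies in play (Zariski over $\QQ_p$, Zariski over $\QQ_q$, and the $p$-adic topology); once one has the description of $\HH_N$ the rest is formal. An alternative route avoiding $p$-adic closures uses the unboundedly many pairwise non-commuting $\tau_i\in\scrH_N$ of Proposition~\ref{prop: properties of scrH_n}: they generate pairwise distinct imaginary quadratic subfields of $B_{p,\infty}$, hence $\scrH_N$ fixes no line in $\bar\QQ_q^2$ (a common eigenline would make each commutator $[\tau_i,\tau_j]$ unipotent, impossible since $\scrB^\times$, being the unit group of a division algebra, contains no nontrivial unipotents), so $Z$ lies in no Borel; this also forces the unipotent radical of $Z^\circ$ to be trivial, so $Z^\circ$ is reductive, and a torus is excluded because $\scrH_N$ would then have an abelian finite-index subgroup. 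Thus $Z^\circ\supseteq\SL_{2}$, and since the determinants of $\scrH_N$ form an infinite (hence Zariski-dense) subgroup of $\GG_m$ — using the non-bipartiteness of $\Lambda_\ell(N)$ from Theorem~\ref{thm: properties of the graphs} to see the $\ell$-valuation is unbounded — one gets $Z=\GL_2$; this argument is longer, so I would present the $p$-adic one.
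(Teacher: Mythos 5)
Your main argument is correct, and it takes a genuinely different route from the one the paper writes out. The paper argues element-theoretically: it takes the pairwise non-commuting elements $\tau_i$ produced in the proof of Proposition~\ref{prop: properties of scrH_n}, replaces them by $\tau_i^{h}$ with $h=[H_N:H_N^0]$ so that they lie in the identity component (every element of $\scrB^\times$ is semi-simple, and one arranges a non-central semi-simple commutator), deduces from the classification of algebraic subgroups of $\GL_2$ that $H_N^0$ is $\SL_2$ or $\GL_2$, and then uses the elements of determinant $\ell^{\alpha(N)m}$, $m\in\ZZ$, to force $H_N=\GL_2$ and hence $G_N=\PGL_2(\QQ_q)$. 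You instead exploit the computation of the $p$-adic closure: $\HH_N\supseteq \HH_N^1=\{x\in\scrR^\times:{\rm Nm}(x)\in\scrL_N\}$ is open in $\scrR^\times$, hence in $\scrB^\times=D^\times(\QQ_p)$; a subset with nonempty $p$-adic interior in the $\QQ_p$-points of the smooth connected group $D^\times$ has full-dimensional, hence full, Zariski closure; and Galois descent transports density from $\QQ_p$ to $\QQ_q$, with the passage to $\PGL_2$ handled by Chevalley's theorem on images plus surjectivity of $\GL_2(\QQ_q)\arr\PGL_2(\QQ_q)$ (Hilbert~90). The paper actually gestures at your route (``quite clear from the description of algebraic subgroups\dots'' given $H_N\supseteq\HH_N$) but fleshes out only the element-theoretic one. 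What each buys: your argument is shorter and cleaner once the closure results are available, but it inherits their heavier input (the strong approximation theorem of Sardari used to compute $\HH_N^{1,+}$), whereas the paper's displayed argument needs only the non-commuting elements and $\ell^{\alpha(N)}\in\scrH_N$, so it is logically independent of that machinery.

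One caveat on your sketched alternative (which is essentially the paper's proof): excluding the torus case merely because $\scrH_N$ would then have an abelian finite-index subgroup is not by itself conclusive, since a group with an abelian subgroup of finite index can still contain unboundedly many pairwise non-commuting elements (reflections in an infinite dihedral group, for instance). This is precisely why the paper passes to the powers $\tau_i^{h}$, which lie in the identity component and still pairwise fail to commute because they generate the distinct fields $K_i$. As you present that argument only as an alternative and your primary proof is complete, this does not affect the correctness of your proposal.
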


\begin{proof}
As $H_N \supseteq \HH_N, G_N \supseteq \GG_N$, our assertion is quite clear from the description of algebraic subgroups of $\GL_2$ and $\PGL_2$ over an algebraic closure. For example, the proof of Proposition~\ref{prop: properties of scrH_n}  shows that we can find arbitrarily large subsets of pairwise non-commuting elements of $\scrH_N$ lying in the connected component $H_N^0$ of $H_N$. Indeed, if $h= [H_N:H_N^0]$, we may replace in the proof the elements $\tau_i$ by $\tau_i^h$ that are mutually non-commuting, semi-simple, and have semi-simple commutators (every element of $\scrB^\times$ is semi-simple). It is also easy to arrange for a commutator to be non-central. From this it is easy to deduce that $H_N^0 = \GL_2$ or $\SL_2$. On the other hand, we have elements of determinant $\ell^{\alpha(N)m}, m \in \ZZ,$ in $H_N$. It follows that $H_N = \GL_2$ and $G_N = \PGL_2(\QQ_q)$.
\end{proof}

\subsection{Dynamics implications - I}  At this point we want to explain the implication of our results thus far for the study of the dynamics of the Hecke operator $T_\ell$. Let $\uE = (E, P)$ be an elliptic curve over $\CC_p$ with good supersingular reduction, thought of as a point on $X_1(N)(\CC_p)$ for $N$ rigid. Let $D(\uE)$ be its residue disc.  As we have seen (the arguments proving Lemma~\ref{lem: branches} apply), the correspondence $T_\ell^n\vert_{D(\uE)}$ decomposes into a sum of rigid-analytic isomorphisms between residue discs. If $T_\ell^n(\uE) = \sum_i [\uE_i]$ (with repetitions allowed) then $T_\ell^n\vert_{D(\uE)} = \sum_i f_i$, where each $f_i\colon D(\uE) \arr D(\uE_i)$ is an isometry. Further, the discs $D(\uE_i)$ are recognized by the mod $p$ reductions $\bar \uE_i$, which are supersingular curves too. Thus, as in the ordinary case, the movement of the residue discs follows the dynamics of the Hecke orbit of $\bar \uE$, which is described by walks in the finite supersingular graph $\Lambda_\ell(N)$.

Enumerate the vertices of the $(\ell+1)$-regular directed graph $\Lambda_\ell(N)$ as $\{v_1, \dots, v_{\nu(N)}\}$, and let~$T$ be the normalized adjacency matrix, so that $(\ell+1)T_{ij}$ is the number of edges from $v_i$ to~$v_j$. By Theorem~\ref{thm: properties of the graphs}, the matrix $T$ is bi-stochastic and can be viewed as a time-homogenous irreducible Markov chain. As $\Lambda_\ell(N)$ is not bipartite, every eigenvalue of $T$, besides $1$, has modulus smaller than $1$. Thus, any initial probability distribution on $\Lambda_\ell(N)^V$ converges exponentially fast to the unique probability distribution $\frac{1}{\nu(N)}\sum_i \delta_{v_i}$, where $\delta_{v_i}$ is the Dirac distribution supported on $v_i$. (For $N=2$ everything goes through the same; for $N=1$ most of the considerations go through, but the limit measure need not be the one we have specified. For example, if $p=11$ and $\ell = 5$, then $T$ is $\frac{1}{6}\cdot\left( \begin{smallmatrix} 3 & 3 \\ 2 & 4 \end{smallmatrix} \right)$ and the stationary distribution is $\left( \begin{smallmatrix} 2/5 \\ 3/5 \end{smallmatrix} \right)$.)

With these considerations, to understand further the action of $T_\ell^n$ we may restrict our attention to a single disc.
That is, we need to study the action of the monoid $\scrH_N^+(\uE)$ on the residue disc $D(\uE)$. This residue disc may be viewed as the generic rigid-analytic fibre $(\tilde D(\bar \uE))_{rig}$ of the formal scheme $\tilde D(\bar \uE)$ that is the formal deformation space of the elliptic curve $\bar \uE$; equivalently, by Serre-Tate and \'etaleness, of the $p$-divisible group $\bar{E}[p^\infty]$; equivalently, by Tate, of the formal group~$\hat{\bar E}$.
The group $\scrR^\times$, and hence $\scrH_N$, can thus be interpreted as automorphism groups of~$\hat {\bar E}$. The work of Gross-Hopkins, which is our next topic, will allow us to consider instead the action of $\scrH_N$, that factors through $\scrG_N$, on $\PP^1_{\QQ_q}$. The methods we shall use will rely on the $p$-adic and Zariski closures of $\scrG_N$. As the groups $\GG_N, G_N$ are well-understood by Corollary~\ref{cor: description of G_N} and Proposition~\ref{prop: Zariski closure}, this will prove to be a powerful technique.

 We can view the formal deformation space $\tilde D(\bar \uE)$ as representing the functor $\scrF$ on complete local Noetherian $W(\FF_{q})$-algebras $R$ with residue field $k$,
\begin{equation}\label{eqn: scrF}
\scrF:\quad  R \mapsto \scrF(R) = \{ (\scrE, \varphi) \}/isom.,
\end{equation}
where $\scrE$ is a $p$-divisible group over $\Spec(R)$, and $\varphi\colon  \scrE \otimes_R k \arr E[p^\infty]\otimes_{\FF_{q}} k$ is an isomorphism. The group $\Aut(E[p^\infty])$, of which $\scrH_N$ is a subgroup, acts on the universal deformation space via its action on the functor:  $\gamma \in \Aut(E[p^\infty])$ acts by \[ (\scrE, \varphi) \mapsto (\scrE, \gamma\circ \varphi).\]
Since the $p$-divisible group of $\bar E$ is connected, by a result of Tate, the functor of deformations of the formal group of $\bar E$ is equivalent to the function $\scrF$.

\medskip

\id For $N=1$ one uses the following diagram. Choose an auxiliary rigid $N$, and use the diagram:
\[\xymatrix@!R=0.5cm{D(\uE)\ar[d] & = &(\tilde{D} (\bar \uE))_{rig}\ar[d]^\cong_{\scrH_N^+-equivar.}\\
D(E)\ar[d] & =& (\tilde{D}(\bar E))_{rig}\ar[d]\\
D(E) /\Aut(\bar E)& = &(\tilde{D}(\bar E))_{rig}/\Aut(\bar E)
}\]
 We remark that the action of $\Aut(\bar E)$ is non-trivial only when $\Aut(\bar E) \supsetneqq \{ \pm 1\}$, and this only happens at $j = 0, 1728$, if these are supersingular at all. 

\subsection{The work of Gross and Hopkins} In this section we review some of the work of Gross and Hopkins \cite{GH}, specializing to cases of particular interest to us. 

\subsubsection{The ideal disc}\label{sec: the ideal disc} Gross and Hopkins study one-dimensional connected $p$-divisible groups through one dimensional formal groups. In \cite[p. 31]{GH} they construct a very particular universal typical formal group $F$ of height $n$ over the ring $\ZZ_p[\![u_1, \dots, u_{n-1}]\!]$. The case that would interest us most is $n=2$. Let $\bar F_0$ denote the reduction of $F$ modulo the maximal ideal of $\ZZ_p[\![u_1, \dots, u_{n-1}]\!]$; it is a formal group of dimension $1$ and height $n$ over $\FF_p$, and, as in \cite{GH}, we let $F_0$ be the specialization of $F$ to a formal group over $\ZZ_p$ obtained by specializing all the $u_i$ to $0$. We denote $\bar F = \bar F_0\otimes_{\FF_p} \FF_{p^n} $. (Note that our notation differs a little from \cite{GH}.)

Consider the functor $\scrF^\star$ on the category of complete local noetherian $W(\FF_{p^n})$-algebras $(R, \germ_R)$ with $p\in \germ_R$, $k_R \colon =R/\germ_R$, given by
\begin{equation}\label{eqn: scrFstar}
\scrF^\star: \quad  R \mapsto \scrF^\star(R)=\{ G/R \text{ a formal group law\;} G\otimes_R k_R = \bar F \otimes_{\FF_{p^n}}k_R \} / \star-{\text isom.}
\end{equation}
Here, we say that $G/R$ and $G'/R$ are $\star$-{\textit{isomorphic}} if there is an isomorphism $G \overset{\sim}{\arr} G'$ over $R$, which reduces to the identity modulo $\germ_R$. This functor is representable by $W(\FF_{p^n})[\![u_1, \dots, u_{n-1}]\!]$, and~$F\otimes_{\ZZ_p}W(\FF_{p^n})$ is a universal object over it \cite[p. 45]{GH}. 
 
 Let $A_n = W(\FF_{p^n})$; it is an unramified extension of $\QQ_p$ of degree $n$ that contains all $p^n -1$ roots of unity. From the specific definition of $F$ given in \cite{GH}, it follows that $F_0 \otimes_{\ZZ_p} A_n$  has an action of $A_n$ such that $[\zeta](x) = \zeta \cdot x$ for $\zeta\in\mu_{p^n-1}$.
This induces an action of $A_n$ on $\Fbar$, making it into a formal $A_n$-module of height $1$, with an induced embedding $j_n\colon A_n \arr \End(\Fbar)$.
The image of $j_n$ along with the Frobenius morphism $\varphi$ generate $\End(\Fbar)$. Indeed, we have  
\[
\End(\Fbar \otimes_{\FF_{p^n}} \fpbar)= \End(\Fbar) = A_n \oplus A_n\varphi \oplus \cdots \oplus A_n\varphi^{n-1},
\]
with $\varphi^n = p$, and $\varphi a = a^\sigma \varphi$, for $a \in A_n$, where $\sigma$ denotes the Frobenius automorphism of~$A_n$. The centre of $\End(\Fbar)$ is $\ZZ_p$. Note that every element $b$ of $\End(\Fbar)$, being an endomorphism of a $1$-dimensional formal group, is given by a power series $b(x) \in \FF_{p^n}[\![x]\!]$.

\subsubsection{The action of $\Aut(\bar F)$ on $\Spf A_n[\![u_1, \dots, u_{n-1}]\!]$} From the above discussion it follows that  
\[ \scrA_n: = \Aut(\bar F) = A_n^\times \oplus A_n\varphi \oplus \cdots \oplus A_n\varphi^{n-1},\]
and its center is $\ZZ_p^\times$. 
The action of $\scrA_n$ on $\Spf A_n[\![u_1, \dots, u_{n-1}]\!]$ is already described in \cite{LT} as follows:  Given $b\in \scrA_n$, represented by the power series $b(x) \in \FF_{p^n}[\![x]\!]$,
let $b^{-1}(x)$ be the inverse of $b(x)$ relative to composition (so that $b(b^{-1}(x)) = x$, etc.) and lift $b^{-1}(x)$ to a power series $h(x) \in A_n[\![ x ]\!]$ in an arbitrary way. Given the universal formal group law $F$ over $A_n[\![ u_1, \dots, u_{n-1}]\!]$, there is a unique group law $F^\prime$ over $A_n[\![ u_1, \dots, u_{n-1}]\!]$ such that
\[ F^\prime(h(x), h(y)) = h(F(x, y)).\]
Note that $F\overset{\sim}{\Arr} F^\prime$ over the ring $A_n[\![ u_1, \dots, u_{n-1}]\!]$ via the map $h$, but this isomorphism is not a $\star$-isomorphism. By the universal property of $F$, there exists a unique automorphism $\beta(b)$ of $A_n[\![ u_1, \dots, u_{n-1}]\!]$ over $A_n$  such that 
\[ \beta(b) F \underset{\star}{\simeq} F^\prime. \]
(The notation $\beta(b)F$ means specializing $F$ by applying the $\beta(b)$ to its coefficients.) 
The automorphism $\beta(b)$ depends only on $b$ and not on the choice of $h$, since for varying $h$  the formal group laws $F'$ are all $\star$-isomorphic.

Here we have described the global action, but the action on points on the universal deformation space $A_n[\![ u_1, \dots, u_{n-1}]\!]$, namely on specializations of $F$ through homomorphisms $A_n[\![ u_1, \dots, u_{n-1}]\!]\arr R$, is virtually the same. 

This describes the action of $\scrA_n$ via the $\star$-isomorphism model. If we consider the equivalent functor classifying
$(G/R, \psi)$ a formal group law with $\psi\colon G\otimes_R k_R \overset{\cong}{\Arr} \bar F \otimes_{\FF_p}k_R$, up to isomorphism, we have a natural action of $\scrA_n$, where $b\in \scrA_n$ acts on $(G/R, \psi)$ by taking it to $(G/R, b\circ \psi)$. The following proposition is an easy exercise.

\begin{prop}\label{prop: equivalence of alpha actions}  The two actions of $\scrA_n$ on the universal deformation space $A_n[\![u_1, \dots, u_{n-1}]\!]$ are the same. 
\end{prop}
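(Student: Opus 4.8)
The plan is to make the equivalence between $\scrF^\star$ and the functor of pairs $(G/R,\psi)$ completely explicit, transport the pair-action across it, and then recognize the result as the Lubin--Tate recipe recalled above.

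\textbf{Step 1: an explicit equivalence.} I would write $\scrF^{\mathrm{rig}}$ for the functor $R\mapsto\{(G/R,\psi)\}/\!\cong$, with $\psi\colon G\otimes_R k_R\overset{\sim}{\Arr}\bar F\otimes_{\FF_p}k_R$. Given $(G/R,\psi)$, lift the invertible power series $\psi\in k_R[\![x]\!]$ arbitrarily to $\tilde\psi\in R[\![x]\!]$ and set $\tilde\psi_\ast G(x,y):=\tilde\psi\big(G(\tilde\psi^{-1}(x),\tilde\psi^{-1}(y))\big)$. This is a formal group law over $R$ that reduces modulo $\germ_R$ to $\bar F\otimes_{\FF_p}k_R$ on the nose, since $\psi$ carries $G\otimes k_R$ isomorphically onto $\bar F\otimes k_R$; and since two lifts of $\psi$ differ by a power series congruent to the identity mod $\germ_R$, its $\star$-isomorphism class $[\tilde\psi_\ast G]\in\scrF^\star(R)$ is independent of the choices. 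This gives a natural transformation $\Theta\colon\scrF^{\mathrm{rig}}\Arr\scrF^\star$; conversely $H\mapsto(H,\mathrm{id})$ goes the other way, and the two are mutually inverse. (This is the point at which one invokes Lubin--Tate rigidity: $F$ has no nontrivial $\star$-automorphism, so all the classifying data above are genuinely unique, not merely unique up to something.)

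\textbf{Step 2: transporting the action and matching.} By definition $b\in\scrA_n$ acts on $\scrF^{\mathrm{rig}}$ by $(G,\psi)\mapsto(G,b\circ\psi)$. Applying $\Theta$ and choosing the lift $\widetilde{b\circ\psi}=\tilde b\circ\tilde\psi$ with $\tilde b\in A_n[\![x]\!]$ any lift of the power series $b$, one gets $\widetilde{b\circ\psi}_\ast G=\tilde b_\ast(\tilde\psi_\ast G)$. Hence, under $\Theta$, the pair-action of $b$ becomes the operation sending the $\star$-class of a formal group law $H$ with $H\otimes k_R=\bar F\otimes k_R$ to the $\star$-class of $\tilde b_\ast H$. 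Specializing to $R=A_n[\![u_1,\dots,u_{n-1}]\!]$ and $H=F$, the formal group law $\tilde b_\ast F$ is precisely the $F'$ of the recipe recalled above (conjugate $F$ by a lift of the appropriate power series attached to $b$), and the automorphism of $A_n[\![u_1,\dots,u_{n-1}]\!]$ classifying its $\star$-class is exactly $\beta(b)$. So the two actions induce the same automorphism of the universal deformation ring for each $b$.

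The only substantive step is the well-definedness in Step~1 --- independence of the lift of $\psi$ up to $\star$-isomorphism --- which is immediate from the Lubin--Tate uniqueness statement, so there is no real obstacle. What does need care, rather than ingenuity, is the bookkeeping of contravariances (left versus right actions, and the normalization $b$ versus $b^{-1}$ in the Gross--Hopkins recipe) so that the two actions are indexed by $\scrA_n$ in the same way and not oppositely; once that is pinned down, both sides are literally ``conjugate $F$ by a lift of the power series attached to $b$ and read off the classifying map.''
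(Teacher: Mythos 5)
Your argument is correct, and since the paper dispatches this proposition with ``the following proposition is an easy exercise'' there is no written proof to compare against; what you give is exactly the intended argument: make the equivalence between $\scrF^\star$ and the rigidified functor of pairs $(G,\psi)$ explicit by lifting $\psi$, transport the action $(G,\psi)\mapsto(G,b\circ\psi)$, and recognize the result as the Lubin--Tate recipe, with Lubin--Tate rigidity guaranteeing well-definedness. The one point you leave as ``bookkeeping'' does close, and it is worth recording how: if $x\colon A_n[\![u_1,\dots,u_{n-1}]\!]\arr R$ classifies the $\star$-class of $x_\ast F$, then your transported action of $b$ sends $x$ to the classifying map of $\tilde b_\ast(x_\ast F)$ with $\tilde b$ a lift of $b$, whereas precomposition with $\beta(b)$ sends $x$ to the classifying map of $h_\ast(x_\ast F)$ with $h$ a lift of $b^{-1}$; hence the pair action of $b$ is $x\mapsto x\circ\beta(b)^{-1}$, which is precisely the left action on $\Spf A_n[\![u_1,\dots,u_{n-1}]\!]$ (and on $X$) induced by the ring automorphism $\beta(b)$. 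In other words, the lift of $b^{-1}$ rather than $b$ in the Gross--Hopkins normalization is there exactly to cancel the contravariance of $\Spf$, so the two actions agree on the nose rather than being opposite, confirming your final paragraph.
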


We thus get a homomorphism $\beta$, with kernel $\ZZ_p^\times$,
\[ \beta\colon  \scrA_n \Arr \Aut_{A_n}(A_n[\![u_1, \dots, u_{n-1}]\!]).\]
It induces an action of $\scrA_n$ on the formal scheme $\tilde{X}=\Spf(A_n[\![u_1, \dots, u_{n-1}]\!])$ and on the associated rigid space 
\[ X = \tilde{X}_{rig}= \left(\Spf(A_n[\![u_1, \dots, u_{n-1}]\!])\right)_{rig}.\]
In \cite[Proposition 14.13]{GH} we find that if $R$ is a complete local noetherian $A_n$-algebra, and $x\in \tilde{X}(R)$ with corresponding formal group law $F_x$, then the orbit of $x$ under $\scrA_n$ is 
\[ {\rm Orb}(x) = \{ y\in \tilde{X}(R)\colon  F_x \cong F_y \text{ over } R\}\]
(note that this is, of course, an isomorphism and not a $\star$-isomorphism) and 
\[ \Stab(x) = \Aut(F_x).\] 

\subsection{Canonical and quasi-canonical lifts: the supersingular case} 
We consider now the case $n=2$. Our main interest in this section is in points $x\in X$ that have a large endomorphism ring. They are tightly linked to dynamical aspects of the action of $\Aut(\Fbar)$ on $X$, and a fortiori to the action of $\scrH_N$ on $X$, as we shall see. But we initially consider the larger picture, which is the action of $\Aut(\Fbar)$. We consider points $x\in X$ such that
\[ \End(F_x) \supsetneqq \ZZ_p.\]
In this case, $\End(F_x)$ is an order in a quadratic field extension of $\QQ_p$. Up to isomorphism there are finitely many such fields; they are in bijection with the non-identity elements of $\QQ_p^\times/(\QQ_p^\times)^2 \cong \ZZ/2\ZZ \times \ZZ_p^\times/(\ZZ_p^\times)^2$. 
Fix models $\{ K_i\} = \{ K_1, \dots, K_r\}$
for these fields, and for $K \in \{ K_i\}$ denote the ring of integers by $\calO_{K}$. Let $\{ 1, \gamma_{K}\}$ be the automorphism group of $K/\QQ_p$.

\subsubsection{} We first classify $x\in X$ according to the isomorphism class $K_i$ of the field $K_x:= \End^0(F_x)$.  

\vspace{0.2cm}

\id Fix then $K \in \{ K_i\}$ and consider points $x$ such that $K_x \cong K$.
We have then
\[ \End(F_x) \cong \calO_{K, s_x}:=\ZZ_p + p^{s_x} \calO_K, \]
and we call the non-negative integer $s_x$ the \textit{level}, or \textit{conductor}, of $x$. Choose an isomorphism $\iota_x\colon \calO_{K, s_x} \arr \End(F_x)$. It is unique up to composition with $\gamma_K$; when we want to refer to the pair of embeddings we shall write $\iota_x^\pm$. The map $\iota_x$ induces  an embedding 
\[ \bar \iota_x\colon \calO_{K, s_x} \hookrightarrow \End(\bar F) \cong \scrR.\]
As the order $\scrR$ consists of all integral elements of $\scrB$, the embedding $\bar \iota_x$ extends uniquely to an embedding $\bar \iota\colon \calO_K \hookrightarrow \End(\bar F)$. Thus, any embedding $\bar \iota_x$ comes from an ``optimal embedding".

Given $d\in \scrR^\times \setminus \ZZ_p^\times$, it will be convenient to denote $\calO_{K, s(d)}$ the order
isomorphic to $\ZZ_p[d]$, and denote $\iota_d$ its embedding into $\End(\bar F)$.

\subsubsection{} Fix an embedding $\bar \iota\colon \calO_K \arr \End(\bar F)$. Let $ QC(\bar \iota, K, s)$ be the set of points $x$ of $X$ such that $K_x \cong K$, $s_x = s$ and $\iota_x$ extends to $\bar \iota$. These are the quasi-canonical lifts relative to $\bar \iota$ of level $s$. Let $qc(K,s) = \sharp  QC(\bar \iota, K, s)$ (it is independent of $\bar \iota$). Gross \cite{qcan} gives the following formula:
\begin{equation}\label{eqn: qc(K,s)}
qc(K,s) = \begin{cases} 
 1 & {\rm if}\; s=0, \\ (p+1)p^{s-1} & s>0, K \;\text{\rm  unramified}, \\  p^s & s>0, K \;\text{\rm ramified}.
 \end{cases}
\end{equation}

\medskip

\id \textit{Caution:} Note that what Gross calls a quasi-canonical lift $x$ of level $s$ is a point of $QC(\bar \iota, K, s)$ if $s$ is even, and a point of  $QC(\varphi \bar \iota \varphi^{-1}, K, s)$ if $s$ is odd, where $\varphi\colon \bar F \arr \bar F$ is  Frobenius. 
Nonetheless, the result about $qc(K, s)$ quoted from \cite{qcan} is valid, as $\varphi \bar \iota \varphi^{-1}$ is an embedding $\calO_K \arr \scrR$ as well. 

\subsubsection{} The sets of points we have defined enjoy an action of $\scrB^\times$ coming from conjugating the embedding $\bar \iota$. 
An element $\gamma \in \scrB^\times$ induces, for any $K$ and $s$, a bijection
\[ QC(\bar \iota, K, s) \overset{\sim}{\Arr} QC(\gamma \bar \iota\gamma^{-1}, K, s).\]
We are interested in the action of $\scrR^\times= \Aut(\Fbar)$. Consider the surjective homomorphism $\nu:= \ord \circ {\rm Nm}: \scrB^\times \arr \ZZ$,
where ${\rm Nm}$ is the reduced norm and $\ord$ is $p$-adic valuation. In the model $\left\{ \left( \begin{smallmatrix} a & pb^\sigma \\ b & a ^ \sigma
\end{smallmatrix}\right) \right\}$, $\nu= \ord \circ \det$ and so we see that $\scrR^\times = \Ker(\nu)$ and $\scrB^\times /\scrR^\times \cong \ZZ$. As by Skolem-Noether $\scrB^\times$ acts transitively on the set of embeddings $\bar \iota\colon K \arr \scrB$ (they are all optimal), the orbits of $\scrR^\times$ on the set of these embeddings are in bijection with $\scrB^\times/\scrR^\times\cdot {\rm Cent}_{\scrB^\times}(\bar \iota(K)) = \scrB^\times/\scrR^\times\cdot \bar \iota(K^\times)$. As $\nu\vert_K = \ord \circ {\rm Nm}_{K/\QQ_p}$ we find that the $\scrR^\times$-orbits in the set of optimal embeddings $\calO_K \arr \scrR$ are in bijection with $\ZZ/\ord ({\rm Nm}({K^\times}))$. Therefore:
\begin{itemize}
\item If $K/\QQ_p$ is ramified, $\Aut(\Fbar)$ acts transitively on the set of embeddings $\bar \iota\colon \calO_K \arr \scrR$. In this action, the centralizer ${\rm Cent}(\bar \iota)$ is $\bar \iota (K^\times)$ and the normalizer satisfies $ {\rm Norm}(\bar \iota)/ {\rm Cent}(\bar \iota) \cong \{ 1, \gamma_K\}$, where the non-trivial class indeed induces $\gamma_K$ by its conjugation action. We remark that this persists also when we take the normalizer and centralizer in $\scrR^\times$. 
\item If $K/\QQ_p$ is unramified, there are two orbits for the action of $\scrR^\times$ on the set of embeddings $\bar \iota\colon \calO_K \arr \scrR$. Again, $ {\rm Norm}(\bar \iota)/ {\rm Cent}(\bar \iota) \cong \{ 1, \gamma_K\}$, but if we take the normalizer and centralizer in $\scrR^\times$ we have $ {\rm Norm}(\bar \iota)\cap \scrR^\times/ {\rm Cent}(\bar \iota)\cap \scrR^\times \cong \{ 1\}$. To see that, consider the standard inclusion $K \arr \scrB$ given by $a\mapsto  \left( \begin{smallmatrix} a & 0\\ 0 & a^\sigma
\end{smallmatrix}\right)$, where the normalizer is generated over the centralizer by $\varphi = \left( \begin{smallmatrix} 0 & p\\ 1 & 0
\end{smallmatrix}\right)$ -- a non-integral element. The two $\scrR^\times$ orbits are thus distinguished by the induced action on the tangent space of $\Fbar$. Note, however, that even in this case, $\Aut(\Fbar)$ acts transitively on the subrings of $\scrB$ that are isomorphic to $\calO_K$.
\end{itemize}
 
\subsubsection{}\label{subsec: 1} Consider now $b\in \scrR^\times$ and the automorphism $\beta(b)$ of $X$ induced by it. If $b\in \ZZ_p^\times$ then $\beta(b)$ is the identity map, and conversely. We are interested in the periodic points $\text{Per}(b)$  of $\beta(b)$ (in particular in the fixed points $\text{Fix}(b)$), and so we may assume $b\not\in \ZZ_p^\times$. 
\begin{itemize}
\item If $b$ has finite order in $\scrR^\times/\ZZ_p^\times$, then $\text{Per}(b) = X$.
\item
If $b$ has infinite order in $\scrR^\times/\ZZ_p^\times$ and $x\in X$ is a periodic point of length $r$, say, then $x$ is a fixed point of $\beta(b^r)$. Note that there are  two embeddings associated to $b^r$, 
\[ \bar \iota_{b^r}^\pm\colon \calO_{K, s} \arr \scrB, \]
where $K\in \{ K_i\}$ is the unique field isomorphic to $\QQ_p(b^r) = \QQ_p(b)$ and  $s = s(b^r)$ is the level. We therefore  next direct our attention to fixed points. 
\end{itemize}
\subsubsection{}\label{subsec: 2} Let $d\in \scrR^\times, d\not\in \ZZ_p^\times$. As noted  $\text{Fix}(\beta(d)) = \{ x : d \in \End(F_x)\}$.
If so, $\End(F_x)$ is an order of conductor $s_x \leq s(d)$. Furthermore, the associated pair of embeddings $\iota_x^\pm\colon \calO_{K_x, s_x}\arr \End(\bar F)$ extend $\bar \iota_d^\pm$. Let $K$ be the representative isomorphic to $\QQ_p[d]$. Then,
\begin{equation}
\begin{split}
\text{Fix}(d) & = \bigcup_{s\leq s(d)} QC(\bar \iota_d^\pm, K, s)\\& =  \bigcup_{s\leq s(d)} \{ x\in X: K_x \cong \QQ_p[d], s_x=s, \bar \iota_x^\pm = \bar \iota_d^\pm \}.
\end{split}
\end{equation}

We return to periodic points for $\beta(b)$, assuming $b$ is of infinite order in $\scrR^\times/\ZZ_p^\times$. It's an easy exercise that $\sup\{ s(b^r): r\geq 1\} = \infty$ . Let $K$ be the representative isomorphic to $\QQ_p[b]$. Then,
\begin{equation}
\begin{split}
 \text{Per}(b) &= \bigcup_{r\geq 1} \text{Fix}(b^r) \\&= \bigcup_{s\geq 0} QC(\bar \iota_b^\pm, K, s) \\& = \{ x\in X: K_x \cong K, \bar \iota_x^\pm  = \bar\iota_b^\pm\}
 \end{split}
\end{equation} 
(the abuse of notation $\bar \iota_b = \bar \iota_{b^r}$ is justified). Some information about the location of the periodic points can be found in \cite{qcan, GH}.

\subsubsection{} We can draw some immediate conclusions concerning the action of $\scrH_N$ on $X$. For one, for every element $b$ of $\scrH_N$, viewed as a branch of the Hecke operator $T_\ell^n$ for some $n$, the fixed and periodic points of $\beta(b)$ are given by \S\S \ref{subsec: 1} - \ref{subsec: 2}. In addition, in contrast to the ordinary case:

\begin{cor} The elements of $\scrA_2 = \scrR^\times$ do not have a common fixed point. In fact there is no finite subset of $X$ that is stable under $\scrH_N$.
\end{cor}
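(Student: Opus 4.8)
The plan is to prove the stronger second assertion — that no nonempty finite $S\subseteq X$ is stable under $\scrH_N$ — and then deduce the first, since a common fixed point $x$ of $\scrA_2=\scrR^\times\supseteq\scrH_N$ would make $\{x\}$ a one-point $\scrH_N$-stable set. Two facts recalled above are the whole engine. First, for every $x\in X$ the ring $\End(F_x)$ is \emph{commutative}: it is either $\ZZ_p$ or an order in a quadratic field extension of $\QQ_p$. Second, for $b\in\scrR^\times$ one has $\text{Fix}(\beta(b))=\{x\in X:\ b\in\End(F_x)\}$ (as in \S\ref{subsec: 2}; for $b\in\ZZ_p^\times$ this reads $X=X$). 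Since $\scrH_N\subseteq\scrR^\times$, both apply to every branch $\beta(b)$, $b\in\scrH_N$, of a power of $T_\ell$.

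The key input is a pair of elements of $\scrH_N$ no power of which commutes. The proof of Proposition~\ref{prop: properties of scrH_n}(2)(b), taken with $n=2$, produces $\tau_1,\tau_2\in\scrH_N$ with $K_i:=\QQ_p(\tau_i)$ two \emph{distinct} quadratic field extensions of $\QQ_p$ inside $\scrB$. I would observe, directly from that construction, that no nonzero power of $\tau_i$ is central: $\tau_i$ generates a power of a prime ideal $\scrL_i\mid\ell$ of $K_i$ with $\scrL_i\neq\overline{\scrL_i}$ (because $\ell$ is split in $K_i$), so $\tau_i^m$ generates $\scrL_i^{mrs_N}\neq\overline{\scrL_i^{mrs_N}}$ for every $m\geq1$, whence $\tau_i^m\neq\overline{\tau_i^m}$ and $\QQ_p(\tau_i^m)=K_i$. (Equivalently: $\tau_i$ has infinite order in $\scrR^\times/\ZZ_p^\times$.)

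Now suppose $S\subseteq X$ is finite, nonempty, and $\scrH_N$-stable, and put $M=(\sharp S)!$. Each $\tau_i\in\scrH_N$ acts on $S$ as a permutation, so $\beta(\tau_i^M)=\beta(\tau_i)^M$ restricts to the identity on $S$; choosing $s\in S$, the second fact above gives $\tau_1^M,\tau_2^M\in\End(F_s)$, and commutativity of $\End(F_s)$ forces $\tau_1^M\tau_2^M=\tau_2^M\tau_1^M$. But $\QQ_p(\tau_1^M)=K_1$ and $\QQ_p(\tau_2^M)=K_2$ are distinct quadratic subfields of the quaternion \emph{division} algebra $\scrB$, and two commuting elements of $\scrB$ generate a commutative subalgebra, necessarily a subfield of degree $\leq2$ over $\QQ_p$, which cannot contain two distinct quadratic subfields — a contradiction. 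This proves the second assertion, and the first follows as noted (one may also argue it directly: $\scrR^\times\subseteq\End(F_x)$ would force the commutative ring $\End(F_x)$ to contain non-commuting units of $\scrR$, e.g.\ a root of unity in $\mu_{q-1}\smallsetminus\mu_{p-1}$ together with $1+\varphi$). There is no real obstacle here — this is, as advertised, an immediate consequence of what is already in place; the only care needed is to recall that $\End(F_s)$ is commutative for every $s\in X$ and to check that the non-commuting elements of Proposition~\ref{prop: properties of scrH_n} can be chosen with no central powers, which is transparent from their construction.
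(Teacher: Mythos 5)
Your proof is correct and follows essentially the same route as the paper: both arguments take two elements of $\scrH_N$ from the proof of Proposition~\ref{prop: properties of scrH_n} generating distinct (non-commuting) quadratic extensions inside $\scrB$ with no central powers, and use that a point fixed by (a power of) such an element has its endomorphism ring --- an order in a single quadratic field --- containing it, so the relevant fixed-point sets are disjoint and no nonempty finite stable set can exist. Your only deviations are cosmetic: you deduce the first assertion from the second (the paper does the first directly) and you make the choice of exponent explicit via $M=(\sharp S)!$ where the paper says ``for suitable $n_i$''.
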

\begin{proof}The first statement follows easily, since we may pick integral elements $b_1$, $b_2$, generating distinct quadratic extensions inside $\scrB$. Then $\text{Fix}(b_1) \cap \text{Fix}(b_2) = \emptyset$. In fact, as in the proof of Proposition~\ref{prop: properties of scrH_n}, we can choose $b_i \in \scrH_N$ such that $b_i^{n_i} \not\in \ZZ_p^\times$, for all $n_i>0$.

As to the second statement, if $S$ is a finite set stable under $\scrH_N$, then, for suitable $n_i>0$, $S \subseteq \text{Fix}(b_i^{n_i}), i = 1,2$. But, we still have $\text{Fix}(b_1^{n_1}) \cap \text{Fix}(b_2^{n_2}) = \emptyset$. Thus, $S = \emptyset$.
\end{proof}

\subsection{The Gross-Hopkins period map} We specialize the results of Gross-Hopkins to the case $n=2$. Recall our notation from \S~\ref{subsec: notation for 5}: $q = p^2$, $\QQ_q$ is the quadratic unramified extension of $\QQ_p$. 

\vspace{0.2cm} 

\id In their paper \cite{GH} Gross and Hopkins construct a period map
\[ \Phi\colon  X \arr \PP^1\]
of rigid-analytic spaces over $\QQ_p$ with the following properties (loc. cit., in particular Lemma 19.3 and Corollary 23.15): 
\begin{enumerate}
\item $\Phi$ is a morphism of rigid spaces that is \'etale and surjective. 
\item $\Phi$ is equivariant for the action of $\scrA_2$, where $\scrA_2 = \{ \left( \begin{smallmatrix} a & pb^\sigma\\ b & a^\sigma \end{smallmatrix} \right)\colon   a\in  W(\FF_{q})^\times, b \in W(\FF_{q})\}\subset \GL_2(\QQ_{q})$, acting through M\"obius transformations on $\PP^1$. 
\item Furthermore, $\Phi$ restricts to a rigid-analytic isomorphism
\[ J\colon  = \{ x\in X\colon  \ord(x) \geq 1/2\} \overset{\sim}{\Arr} U\colon =\{ w\colon  \ord(w) \geq 1/2 \} \subset \PP^1_{(w_0\colon w_1)}\]
In particular,
the map $\Phi$ induces a bijection between the $\QQ_{p^2}$ points of $J$ and $U$.
Under this map the point $0$, corresponding to the formal group $F_0$, is mapped to the point $w=0=(1:0)$ (sic!), where $w = w_1/w_0$. Furthermore, both $J$ and $U$ are $\scrA_2$-invariant.  
\end{enumerate}

\subsubsection{Reduction to the ideal disc} Above we have discussed the work of Gross-Hopkins that relates to the deformation of a very particular formal group that we denoted $\bar F$. To connect it with our main subject, let $E$ be a supersingular elliptic curve defined over a finite extension $k$ of $\FF_{q}$, and $P$ a point of order  $N$ on $E$. Then $E$ has a unique model $E_1$ over $\FF_{q}$ whose Frobenius endomorphism $\varphi$ satisfies $\varphi^2 = p$ (Lemma~\ref{lem: models of elliptic curves over fpsquare}). Using \cite[Proposition 24.2.9]{Haz}, we find that the formal group $\widehat E_1$ is isomorphic to $\bar F$ over $\FF_{q}$, as the property $\varphi^2 = p$ holds for $\bar F$ as well. Fix such an isomorphism. Then the universal deformation spaces of $\uE=(E,P)$ and $\bar F$ become isomorphic after base change to $W(k)$. We remark that the functor $\scrF^\star$ in~(\ref{eqn: scrFstar}) (in the case $n=2$) is naturally equivalent to $\scrF$ in~(\ref{eqn: scrF}) -- see \cite[Proposition 3.3]{LT} and that allows us to use \cite{GH, LT}.

The associated rigid space $X_\uE = D(\uE)$ 
inherits  an action of $\scrA_2$ and an equivariant period map $\Phi\colon  X_\uE \arr \PP^1$ over $W(k)[p^{-1}]$, as well as a base point $0$ corresponding to the unique $A_2$-formal module -- none other than $F_0$ -- lifting $\widehat E$.   

\begin{lem}\label{lem: fields of definition of singular moduli} Let $K$ be a quadratic imaginary field in which $p$ is either inert or ramified. Let $E/\overline \QQ_p$ be an elliptic curve with CM by an order of $K$ of conductor prime to $p$. Then $\QQ_p(j(E))$ is a finite extension of $\QQ_p$ of ramification index at most $2$. In fact, if $p$ is inert in $K$, $\QQ_p(j(E))$ is at most a quadratic unramified extension of $\QQ_p$, while if $p$ is ramified in $K$, $\QQ_p(j(E))$ has degree dividing $4$ over $\QQ_p$.
\end{lem}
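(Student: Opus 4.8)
The plan is to reduce the statement to the decomposition of $p$ in a ring class field of $K$, and then to a short ideal-theoretic computation. Write $\calO = \End(E) = \ZZ + c\calO_K$ with $(c,p)=1$ and $\calO_K = \ZZ + \ZZ\gamma$. First I would pass to a global situation: $j(E)$ is a singular modulus (a root of the class polynomial of $\calO$), in particular algebraic over $\QQ$, so after fixing an embedding $\overline{\QQ}\hookrightarrow\overline{\QQ}_p$ we may regard $j(E)$ as lying in a number field, and $\QQ_p(j(E))$ is then the completion of $\QQ(j(E))$ at the induced place. By the main theorem of complex multiplication, $L := K(j(E))$ is the ring class field $H_\calO$ of $\calO$: it is abelian over $K$, unramified at every prime of $K$ prime to $c$, and $\Gal(L/K)\cong\mathrm{Pic}(\calO)$ in such a way that the Frobenius at a prime $\gerq$ of $\calO$ prime to $c$ corresponds to the class $[\gerq]$; hence the residue degree of such a $\gerq$ in $L/K$ equals the order of $[\gerq]$ in $\mathrm{Pic}(\calO)$. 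Since $\QQ_p(j(E))$ embeds into the completion $L_w$ of $L$ at a place $w\mid p$, it suffices to bound $[L_w:\QQ_p]$ and its ramification index over $\QQ_p$.

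Next I would identify the prime $\gerq$ of $\calO$ above $p$. Because $(c,p)=1$, reduction modulo $p$ gives $\calO/p\calO\cong\calO_K/p\calO_K$, so, $p$ being inert or ramified in $K$, there is exactly one prime $\gerq$ of $\calO$ above $p$, namely $\gerq=\gerP\cap\calO$ with $\gerP$ the prime of $\calO_K$ above $p$. As $\gerq$ is prime to the conductor it is invertible, and $\mathfrak a\mapsto\mathfrak a\calO_K$ is an isomorphism from invertible $\calO$-ideals prime to $c$ onto $\calO_K$-ideals prime to $c$, with inverse $\mathfrak A\mapsto\mathfrak A\cap\calO$. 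A one-line check gives $p\calO_K\cap\calO = p\calO$: if $p(a+b\gamma)\in\ZZ+c\ZZ\gamma$ then $c\mid pb$, hence $c\mid b$, so $p(a+b\gamma)\in p(\ZZ+c\ZZ\gamma)=p\calO$. Writing $p\calO_K=\gerP^{e_p}$ (so $e_p=1$ if $p$ is inert and $e_p=2$ if $p$ is ramified), the invertible $\calO$-ideals $\gerq^{e_p}$ and $p\calO$ have the same $\calO_K$-extension $\gerP^{e_p}=p\calO_K$, hence $\gerq^{e_p}=p\calO$, which is principal. Therefore $[\gerq]$ has order $f\mid e_p$ in $\mathrm{Pic}(\calO)$, i.e.\ $\gerq$ has residue degree $f\le e_p$ in $L/K$; moreover $L/K$ is unramified at $\gerq$ since $\gerq$ is prime to $c$.

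Now I would conclude case by case. In either case $K_p:=K\otimes_\QQ\QQ_p$ is a quadratic field extension of $\QQ_p$ with $e(K_p/\QQ_p)=e_p$, and $L_w/K_p$ is unramified of degree $f$, so $[L_w:\QQ_p]=2f$ and $e(L_w/\QQ_p)=e_p$. If $p$ is inert in $K$, then $e_p=1$ forces $f=1$ (indeed $\gerq=p\calO$ is already principal), so $L_w=K_p=\QQ_{p^2}$ and $\QQ_p(j(E))\subseteq\QQ_{p^2}$ is unramified of degree at most $2$. If $p$ is ramified in $K$, then $e_p=2$, $K_p$ is a ramified quadratic extension of $\QQ_p$, and $f\mid 2$, so $[L_w:\QQ_p]\in\{2,4\}$ divides $4$ and $e(L_w/\QQ_p)=2$; by the tower law $[\QQ_p(j(E)):\QQ_p]$ divides $4$ and $e(\QQ_p(j(E))/\QQ_p)\mid 2$.

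I expect the main obstacle to be organizational rather than any single deep step: one must make precise that $\QQ_p(j(E))$ really sits inside the completion $L_w$ with the stated degree and ramification, that the residue degree of $\gerq$ in $H_\calO/K$ equals $\mathrm{ord}([\gerq])$, and carry out the elementary identities above. Alternatively, the lemma can be proved purely $p$-adically, avoiding global CM theory: $E$ has potentially good reduction, necessarily supersingular (were it ordinary, $\End^0$ of the reduction would be $K$ with $p$ split in $K$, a contradiction), and by Serre--Tate and Tate the deformation of $E$ is governed by that of its formal group $\widehat E$; since $\End(\widehat E)\otimes\ZZ_p=\calO_{K_p}$, this formal group is a formal $\calO_{K_p}$-module of height $1$, and Lubin--Tate rigidity (\cite{LT}, \cite{GH}) shows the deformation, hence $E$ itself, is defined over $\calO_{K_p}\otimes_{\ZZ_p}W(\FF_{p^2})$ --- which is $W(\FF_{p^2})$ if $p$ is inert, and the ring of integers of a degree-$4$, ramification-index-$2$ extension of $\QQ_p$ if $p$ is ramified --- giving the same bounds, in agreement with Gross's determination of the fields of definition of (quasi-)canonical lifts in \cite{qcan}.
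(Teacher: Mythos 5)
Your main argument is essentially the paper's proof: both identify $K(j(E))$ with the ring class field of the order via CM theory and then use class field theory to show the completion at a prime above $p$ is unramified over $K_\gerp$ of degree equal to the order of the class of the prime above $p$ in $\Pic(\calO)$, which is $1$ in the inert case and at most $2$ in the ramified case. The only difference is presentational (you use Frobenius elements and the ideal correspondence between $\calO$ and $\calO_K$, while the paper computes the idelic norm group $K^\times W\cap K_\gerp^\times$ directly), and your computation is correct.
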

\begin{proof} Assume that $E$ has $CM$ by $R_\gerf$, the order of conductor $\gerf \in \ZZ_{\geq 1}$ in $\calO_K$. Let $K(\gerf)$ denote the ring class field of conductor $\gerf$ over $K$. Let $V$ denote the set of all finite places of $K$, and for $v\in V$, denote by $O_v$ (resp., $K_v$) the completion of $\calO_K$ (resp., $K$) at $v$. Let $\pi_v\colon \calO_v^\times \arr (\calO_v/\gerf\calO_v)^\times$ be the natural projection.  Under the isomorphism of class field theory, $K(\gerf)$ corresponds to the subgroup $K^\times W \subset \AA_K^\times$, where $W$ is given by $W=\Pi_{v\in V} W_v$, and $W_v$ is defined as
\[
W_v=\begin{cases} \pi_v^{-1}((\ZZ/\gerf\ZZ)^\times)& v|\gerf, \\ \calO_{K,v}^\times   & v\!\!\!\not |\gerf. \\\end{cases}
\]
 Let $\gerp$ denote the unique prime of $K$ above $p$, and choose a prime $\tilde{\gerp}$ of $K(\gerf)$ above $\gerp$. By class field theory, the local field $K(\gerf)_{\tilde{\gerp}}$ corresponds to the subgroup $K^\times W \cap K_\gerp \subset K_\gerp^\times$. Let us denote by $\Pic(R_\gerf)$ the ideal class group of $R_\gerf$. Let $a$ denote the order of $[\gerp]$ in $\Pic(R_\gerf)$. Then 
 \[
 K^\times W \cap K_\gerp =\calO_\gerp^\times\varpi_\gerp^{a\ZZ},
 \]
where $\varpi_\gerp$ is a choice of a uniformizer in $\calO_\gerp$. In particular, we find that $K(\gerf)_{\tilde{\gerp}}$ is an unramified extension of degree $a$ over $K_\gerp$. By the theory of complex multiplication we have $K(j(E))=K(\gerf)$. Hence, it is enough to show that  $K(\gerf)_{\tilde{\gerp}}$ satisfies the conditions stated in the lemma. If $p$ is inert in $K$, then $K_\gerp=\QQ_{p^2}$ and $a=1$. If $p$ is ramified in $K$, then $K_\gerp$ is a quadratic  ramified extension of $\QQ_p$ and $a\leq 2$. This proves the lemma.
\end{proof}
\begin{cor}\label{cor: singular moduli in J}All CM points of conductor prime to $p$ in the residue disc $D(\uE)$ lie in $J(\QQ_q)$. 
\end{cor}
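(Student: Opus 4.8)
The plan is to deduce the statement quickly from Lemma~\ref{lem: fields of definition of singular moduli}, combined with the elementary fact that a point of an open unit disc defined over a field of small ramification automatically lies in the sub-disc $J$. Fix a singular modulus $x$ in $D(\uE)$ of conductor prime to $p$. Concretely $x$ corresponds to a pair $(\tilde E,\tilde P)$ over a finite extension $L$ of $\QQ_p$, where $\tilde E$ has complex multiplication by an order $\calO$ of conductor prime to $p$ in an imaginary quadratic field $K=\End^0(\tilde E)$, and $(\tilde E,\tilde P)$ has good reduction to the supersingular representative $\uE$. Since the reduction is supersingular, $p$ is non-split -- inert or ramified -- in $K$, and since the conductor of $\calO$ is prime to $p$ we have $\calO\otimes_{\ZZ}\ZZ_p=\calO_K\otimes_{\ZZ}\ZZ_p$; hence $\End(F_x)$ equals the maximal order $\calO_K\otimes_{\ZZ}\ZZ_p$ of the quadratic field $K\otimes_{\QQ}\QQ_p$, i.e.\ $x$ is a canonical (level-$0$) lift in the terminology above. (For $N\le 3$ one first passes to the rigid cover, where $\Aut(\uE)=\{1\}$ by rigidity; this affects nothing below.)

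The first step is to apply Lemma~\ref{lem: fields of definition of singular moduli}: because $p$ is inert or ramified in $K$ and $\calO$ has conductor prime to $p$, the field $\QQ_p(j(\tilde E))$ has ramification index at most $2$ over $\QQ_p$ (and is at most an unramified quadratic extension of $\QQ_p$ when $p$ is inert in $K$). The second step is to check that passing from $j(\tilde E)$ to the actual point $x$ of $D(\uE)$, viewed through the Gross--Hopkins identification $D(\uE)\cong X$, costs only an unramified extension. Indeed, as a point of the modular curve $X_1(N)$, $x$ is defined over the field generated over $\QQ_p(j(\tilde E))$ by the coordinates of the level-$N$ point $\tilde P$ and by the data of the reduction isomorphism matching $(\tilde E,\tilde P)\bmod\germ_L$ with $\uE$; since $(N,p)=1$ and $\tilde E$ has good reduction, $\tilde E[N]$ is \'etale over $\calO_L$, so this extension is unramified, and the base change to $W(k)$ used to set up the comparison $D(\uE)\cong X$ is unramified as well, $k$ being a finite field. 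Hence $x$, as a point of $X$, is defined over a finite extension $L'$ with $e(L'/\QQ_p)=e(\QQ_p(j(\tilde E))/\QQ_p)\le 2$. I expect this bookkeeping -- confirming that the level structure and the rigidification/comparison isomorphism contribute only unramified extensions -- to be the only point of the argument that needs genuine attention; everything else is formal.

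For the last step, recall that $X$ is the open unit disc over $\QQ_q$ in the Gross--Hopkins coordinate $u_1$, and that by definition $J=\{\,\ord(u_1)\ge 1/2\,\}$ while $U=\{\,\ord(w)\ge 1/2\,\}\subset\PP^1$. For our point, $u_1(x)\in L'$ satisfies $\ord(u_1(x))>0$, and $\ord(L'^{\times})\subseteq\tfrac12\ZZ$ since $e(L'/\QQ_p)\le 2$; therefore $\ord(u_1(x))\ge\tfrac12$, that is, $x\in J$. When moreover the discriminant is prime to $p$ -- equivalently $p$ is inert in $K$ -- the sharper conclusion of Lemma~\ref{lem: fields of definition of singular moduli} forces $L'/\QQ_p$ to be unramified, so $x$ is a rational point of $J$, which the isomorphism $\Phi\colon J\overset{\sim}{\Arr}U$ then matches with a point of the disc $U$; this rationality is the input used in the subsequent analysis. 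No geometric input beyond Lemma~\ref{lem: fields of definition of singular moduli} and the defining inequality for $J$ enters the proof.
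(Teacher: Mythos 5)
Your argument is correct and is exactly the deduction the paper intends: Lemma~\ref{lem: fields of definition of singular moduli} bounds the ramification of the field of definition of the singular modulus, the level structure and the comparison of $D(\uE)$ with the Gross--Hopkins deformation space contribute only unramified extensions, and a positive valuation lying in $\tfrac{1}{2}\ZZ$ is automatically $\geq \tfrac{1}{2}$, placing the point in $J$. The caveat you flag is genuine and matches the paper's own usage elsewhere (``discriminant prime to $p$''): the $\QQ_q$-rationality needs $p$ inert in $K$, while for $p$ ramified with conductor prime to $p$ the argument only yields membership in $J$ over a quadratic, possibly ramified, extension.
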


\subsection{Dynamics implications-II: Ideas from ergodic theory}   Corollary~\ref{cor: singular moduli in J} asserted that all  CM points of discriminant prime to $p$ in $D(\uE)$ lie in the set $J$ defined above. Clearly, if we wish to understand the action of $\scrA_2$ on $J$ -- and in particular on such CM points -- we may as well study the action of $\scrA_2$ on $U$. More precisely, we are interested in the action of $\scrH_N$ on~$J$ and we may reduce it to the very explicitly described action of $\scrG_N$ on $U$ -- see Corollary~\ref{cor: description of G_N}.

This is a fortunate setting, achieved at the price of restricting our attention to ``mildly ramified" CM points that lie in a particular small disc within $D(\uE)$. The general case is more complex and indeed the map $\Phi$ is given by a complex formula (\cite{GH}, equations (25.6), (25.7)) that makes it hard to translate results for $\PP^1$ to results for $D(\uE)$.

\subsubsection{Minimal sets} Let $\KK$ be a field extension of $\QQ_q$. We denote $J(\KK), U(\KK)$, etc. points of $J, U,$ etc. with coordinates in $\KK$. Recall that $\HH_N$ is the closure of $\scrH_N$ and of $\scrH_N^+$ in $\scrR^\times = \scrA_2$. The groups $\HH_N$, $sat(\HH_N)$, $\GG_N$ are the same in their action on $\PP^1(\KK)$, and in fact act by isometries on $U(\KK)$. This follows from the identity $ \gamma x - \gamma y = \frac{(x-y)(aa^\sigma - pbb^\sigma)}{(bx+ a^\sigma)(by + a^\sigma)}$, for $\gamma = \left( \begin{smallmatrix} a & pb^\sigma \\ b & a^\sigma
\end{smallmatrix}\right)$.

An \textit{$\HH_N$-minimal set} of $\PP^1(\KK)$  is an $\HH_N$-invariant non-empty closed set that is minimal relative to these properties. 
\begin{prop}\label{prop: Gamma mu orbits} The $\HH_N$-minimal sets $S$ are the sets of the form $\HH_N\cdot f$, for some $f\in \PP^1(\KK)$. Furthermore, $\HH_N \cdot f = \overline{\scrH_N^+\cdot f}$.
\end{prop}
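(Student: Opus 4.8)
The plan is to prove the two assertions in turn. First I would establish that every $\HH_N$-minimal set has the form $\HH_N\cdot f$. Since $\PP^1(\KK)$ is not compact in general, I must be slightly careful, but the key point is that $\HH_N$ is a \emph{compact} group (it is the $p$-adic closure of $\scrH_N$ inside the compact group $\scrR^\times=\scrA_2$), and a compact group acting continuously on a Hausdorff space has \emph{closed} orbits: indeed $\HH_N\cdot f$ is the continuous image of the compact set $\HH_N$, hence compact, hence closed. Thus each orbit $\HH_N\cdot f$ is already a non-empty closed $\HH_N$-invariant set. Given any non-empty closed invariant set $S$, pick $f\in S$; then $\HH_N\cdot f\subseteq S$ is a non-empty closed invariant subset, so by minimality $S=\HH_N\cdot f$. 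Conversely any orbit $\HH_N\cdot f$ is minimal: if $T\subseteq \HH_N\cdot f$ is non-empty closed invariant and $g\in T$, then $g=\gamma f$ for some $\gamma\in\HH_N$, and since $\HH_N$ is a group $f=\gamma^{-1}g\in T$, so $\HH_N\cdot f\subseteq T$. This proves the first sentence of the proposition and uses only compactness of $\HH_N$ and continuity of the action by M\"obius transformations (the isometry identity recorded just above the proposition guarantees continuity, even equicontinuity).

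For the second assertion, $\HH_N\cdot f = \overline{\scrH_N^+\cdot f}$, I would argue by a double inclusion. The inclusion $\overline{\scrH_N^+\cdot f}\subseteq \HH_N\cdot f$ is immediate: $\scrH_N^+\subseteq \scrH_N\subseteq \HH_N$, and $\HH_N\cdot f$ is closed (by the compactness argument above), so it contains the closure of $\scrH_N^+\cdot f$. For the reverse inclusion, recall from the corollary following Proposition on $\HH_N^{1,+}$ that the $p$-adic closure of $\scrH_N^+$ \emph{equals} $\HH_N$ --- that is, $\HH_N^+=\HH_N$; this is precisely the content of ``$\HH_N^+$ is a group, equal to the $p$-adic closure of $\scrH_N$.'' Hence for any $\gamma\in\HH_N$ there is a net (or sequence, since everything is metrizable) $\gamma_n\in\scrH_N^+$ with $\gamma_n\to\gamma$ in $\scrR^\times$; by continuity of the action, $\gamma_n f\to\gamma f$, so $\gamma f\in\overline{\scrH_N^+\cdot f}$. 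Therefore $\HH_N\cdot f\subseteq\overline{\scrH_N^+\cdot f}$, completing the equality.

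Finally I would record the two technical facts being invoked, so the proof is self-contained relative to the excerpt: (i) $\scrR^\times$ is a compact (profinite) group and $\HH_N$, being a closed subgroup, is compact; (ii) the action of $\scrR^\times=\scrA_2$ on $\PP^1(\KK)$ through M\"obius transformations is continuous, indeed acts by isometries on $U(\KK)$ by the displayed identity $\gamma x-\gamma y=\tfrac{(x-y)(aa^\sigma-pbb^\sigma)}{(bx+a^\sigma)(by+a^\sigma)}$, and $\HH_N$, $\GG_N$ have the same action on $U(\KK)$. The main (and really only) obstacle is the non-compactness of $\PP^1(\KK)$, which could a priori prevent orbits from being closed; this is resolved entirely by the compactness of the acting group $\HH_N$, so orbits are automatically compact and the usual minimal-set dichotomy applies verbatim. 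I expect no further difficulty; the whole argument is a soft topological-dynamics statement once one notes $\HH_N^+=\HH_N$ and that $\HH_N$ is compact.

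\begin{proof}
Since $\scrR^\times$ is a profinite (hence compact) group and $\HH_N$ is a closed subgroup, $\HH_N$ is compact. The action of $\HH_N\subseteq \scrA_2$ on $\PP^1(\KK)$ by M\"obius transformations is continuous; in fact, by the identity $\gamma x - \gamma y = \frac{(x-y)(aa^\sigma - pbb^\sigma)}{(bx + a^\sigma)(by + a^\sigma)}$ it preserves $U(\KK)$ and acts there by isometries. Consequently, for any $f\in \PP^1(\KK)$ the orbit $\HH_N\cdot f$ is the image of the compact set $\HH_N$ under a continuous map, hence compact, hence closed, and of course non-empty and $\HH_N$-invariant.

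We first show that the $\HH_N$-minimal sets are exactly the orbits $\HH_N\cdot f$. Let $S\subseteq \PP^1(\KK)$ be an $\HH_N$-minimal set and choose $f\in S$. Then $\HH_N\cdot f$ is a non-empty closed $\HH_N$-invariant subset of $S$, so by minimality $S = \HH_N\cdot f$. Conversely, let $f\in\PP^1(\KK)$ and let $T\subseteq \HH_N\cdot f$ be a non-empty closed $\HH_N$-invariant set; pick $g\in T$ and write $g = \gamma f$ with $\gamma\in \HH_N$. Since $\HH_N$ is a group, $f = \gamma^{-1} g\in T$, whence $\HH_N\cdot f\subseteq T$ and therefore $T = \HH_N\cdot f$. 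Thus $\HH_N\cdot f$ is minimal.

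It remains to prove $\HH_N\cdot f = \overline{\scrH_N^+\cdot f}$. As $\scrH_N^+\subseteq \HH_N$ and $\HH_N\cdot f$ is closed, we get $\overline{\scrH_N^+\cdot f}\subseteq \HH_N\cdot f$. For the reverse inclusion, recall that the $p$-adic closure $\HH_N^+$ of $\scrH_N^+$ equals $\HH_N$. Hence, given $\gamma\in\HH_N$, there is a sequence $\gamma_n\in \scrH_N^+$ with $\gamma_n\to\gamma$ in $\scrR^\times$. By continuity of the action, $\gamma_n f\to \gamma f$, so $\gamma f\in \overline{\scrH_N^+\cdot f}$. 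Therefore $\HH_N\cdot f\subseteq \overline{\scrH_N^+\cdot f}$, and the two sets are equal.
\end{proof}
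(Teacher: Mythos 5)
Your proof is correct and follows essentially the same route as the paper: compactness of $\HH_N$ plus continuity of the action gives closed orbits, minimality then forces $S=\HH_N\cdot f$ (and conversely orbits are minimal since $\HH_N$ is a group), and the identity $\HH_N\cdot f=\overline{\scrH_N^+\cdot f}$ follows from $\overline{\scrH_N^+}=\HH_N$ together with continuity. The only difference is that you make explicit the appeal to the earlier corollary $\HH_N^+=\HH_N$, which the paper uses implicitly.
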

\begin{proof} First note that for any $f$ we have that $\HH_N \cdot f$ is closed - in fact, compact - because $\HH_N$ is compact and the action of $\GL_2(\KK)$ on $\PP^1(\KK)$ is continuous.
If $S$ is minimal, pick any $f\in S$, then $\HH_N \cdot f \subseteq S$ is closed and $\HH_N$-invariant, hence equal to $S$. Conversely, consider a set $\HH_N\cdot f$ for some $f\in \PP^1(\KK)$. As an $\HH_N$-minimal subset contained in $\HH_N\cdot f$ contains an element of the form $\gamma f$ for some $\gamma \in \HH_N$ and so it also contains $\HH_N \cdot \gamma f = \HH_N\cdot f$.

To prove the rest, one proves that $\overline{\scrH_N^+\cdot f} = \overline{\scrH_N^+}\cdot f = \HH_N \cdot f$: On the one hand, $\overline{\scrH_N^+}\cdot f$ is closed and contains $\scrH_N^+\cdot f$, hence also its closure. On the other hand, let $\gamma_i \in \scrH_N^+, \gamma_i \arr \gamma$, then also $\gamma_i \cdot f \arr \gamma \cdot f$. This shows that $ \overline{\scrH_N^+}\cdot f  \subseteq\overline{\scrH_N^+\cdot f} $.
\end{proof}

\subsubsection{Measures} Our proof in Proposition~\ref{prop: properties of scrH_n} that $\scrG_N$ is finitely generated in fact supplied us with a set of $t = t(N)$ elements, \[ \Gamma = \{ \gamma_1, \dots, \gamma_{t} \},\] in $\scrH_N^+$ that  generate the image of $\scrH_N^+$ in $\PGL_2(\QQ_q)$ as a monoid. We include the identity among them. Note that there is no canonical choice of such elements, though we chose the generators to be the set of all elements of $\scrH_N^+$
obtained from closed labeled walks of explicitly bounded length.  Let $\mu = \mu(N, \Gamma)$ be the atomic measure of $\GL_2(\QQ_{q})$ supported on $\Gamma$. Namely, 
\[ \mu  = \frac{1}{t}\sum_{i=1}^t \delta_{\gamma_i}.\]
The following lemma is  clear. 
\begin{lem} The group $\HH_N$ is the minimal closed monoid of $\GL_2(\QQ_q)$ (in the $p$-adic topology) containing the support of $\mu$. To underline that, denote it in the sequel $\HH_{N, \mu}$.
\end{lem}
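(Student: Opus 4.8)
The plan is to deduce this lemma directly from two facts already in hand: that $\HH_N$ is a group (hence in particular a closed monoid), and that the finite set $\Gamma = \{\gamma_1,\dots,\gamma_t\}$ on which $\mu$ is supported generates $\scrH_N^+$ as a monoid, the latter being exactly what was produced in the proof of Proposition~\ref{prop: properties of scrH_n}. So this will be a short, formal argument rather than a computation.

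First I would check that $\HH_N$ is a closed submonoid of $\GL_2(\QQ_q)$ containing $\mathrm{supp}(\mu)=\Gamma$. Recall $\HH_N$ was identified above as the common $p$-adic closure of $\scrH_N^+$ and of $\scrH_N$ inside $\scrR^\times$, and we saw it is in fact a group; in particular it is topologically closed and closed under multiplication. Here one should note that $\scrR^\times$ is itself a closed — indeed compact — subgroup of $\GL_2(\QQ_q)$: it is cut out inside $M_2(W(\FF_q))$ by the closed conditions defining the model $\left\{\left(\begin{smallmatrix} a & pb^\sigma\\ b& a^\sigma\end{smallmatrix}\right)\right\}$ together with $aa^\sigma-pbb^\sigma\in\ZZ_p^\times$. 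Consequently the closure of $\scrH_N^+$ computed inside $\scrR^\times$ coincides with its closure computed inside $\GL_2(\QQ_q)$, so $\HH_N$ is genuinely a closed submonoid of $\GL_2(\QQ_q)$. Since $\Gamma\subset\scrH_N^+\subseteq\overline{\scrH_N^+}=\HH_N$, it contains the support of $\mu$.

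Next I would prove minimality. Let $M\subseteq\GL_2(\QQ_q)$ be any closed submonoid with $\Gamma\subseteq M$. Because $\Gamma$ generates $\scrH_N^+$ as a monoid, every element of $\scrH_N^+$ is a finite product of elements of $\Gamma$, hence lies in $M$; thus $\scrH_N^+\subseteq M$. Since $M$ is closed, it contains $\overline{\scrH_N^+}=\HH_N$. Therefore $\HH_N$ is contained in every closed submonoid of $\GL_2(\QQ_q)$ containing $\mathrm{supp}(\mu)$, i.e.\ it is the minimal one, and we record the notation $\HH_{N,\mu}:=\HH_N$.

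I do not expect any real obstacle: the entire content is the generation statement for $\Gamma$ and the previously established fact that $\HH_N$ is a closed group. The one point deserving a line of care is the observation that $\scrR^\times$ is closed in $\GL_2(\QQ_q)$, so that taking closures in $\scrR^\times$ (as was done when defining $\HH_N$) is consistent with the notion ``closed submonoid of $\GL_2(\QQ_q)$'' appearing in the statement; this is immediate from compactness of $\scrR^\times$.
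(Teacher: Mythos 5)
Your argument is correct, and it is exactly the routine verification the paper has in mind: the paper states this lemma without proof (``The following lemma is clear''), and its content is precisely that $\Gamma$ generates $\scrH_N^+$ as a monoid (from the proof of Proposition~\ref{prop: properties of scrH_n}) plus the fact that $\HH_N$ is the closure of $\scrH_N^+$ and is a closed (sub)monoid of $\GL_2(\QQ_q)$, compactness of $\scrR^\times$ making the two notions of closure agree. Nothing further is needed.
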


We use the work of Benoist and Quint \cite{BQ1}; see also the references \cite{BQ2, EW} for general background material. In particular,  \cite{BQ1} implies the following:

\begin{thm} The map $\nu \mapsto \text{ Supp}(\nu)$ is a bijection 
\[ \{ \mu-\text{ergodic probability measures on } U(\KK)\} \Leftrightarrow \{ \HH_{N, \mu}-\text{minimal subsets of } U(\KK)\}.\]
\end{thm}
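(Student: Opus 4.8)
The plan is to deduce this from the general theory of stationary measures developed by Benoist and Quint, applied to the random walk on $U(\KK)$ driven by $\mu$. First I would recall the relevant framework: the group $\GL_2(\QQ_q)$ acts continuously on the compact space $U(\KK)$ (compact because $U$ is a closed disc in $\PP^1$ and $\KK$ is locally compact, or more precisely because $\HH_{N,\mu}$ is compact and its orbits are closed by Proposition~\ref{prop: Gamma mu orbits}), and $\mu$ is a finitely supported probability measure whose support generates the compact monoid $\HH_{N,\mu}$. In this setting a probability measure $\nu$ on $U(\KK)$ is \emph{$\mu$-stationary} if $\mu * \nu = \nu$, i.e. $\int \gamma_* \nu\, d\mu(\gamma) = \nu$, and $\nu$ is \emph{$\mu$-ergodic} if it is an extreme point of the convex set of $\mu$-stationary measures. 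The Benoist--Quint machinery for algebraic actions (here the action is through M\"obius transformations, an algebraic action on $\PP^1$) gives a precise correspondence between ergodic stationary measures and minimal invariant closed sets, provided the acting group is, up to the relevant closure, well-behaved; this is where the explicit description of $\GG_N$ and $H_N$ from Corollary~\ref{cor: description of G_N} and Proposition~\ref{prop: Zariski closure} enters.

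The key steps, in order, would be: (i) Identify $\HH_{N,\mu}$ with $\HH_N$ and note that by the preceding lemma it is the minimal closed monoid containing $\text{Supp}(\mu)$; since $\HH_N$ is in fact a \emph{group} (a compact subgroup of $\scrR^\times$) acting by isometries on $U(\KK)$, the random walk is driven by a measure on a compact group. (ii) Show that the support of any $\mu$-ergodic stationary measure $\nu$ is $\HH_N$-invariant and closed, hence contains an $\HH_N$-minimal set; conversely, for each $\HH_N$-minimal set $S = \HH_N \cdot f$ (Proposition~\ref{prop: Gamma mu orbits}), construct a stationary measure supported exactly on $S$ by pushing forward a suitable Haar-type measure, using that $S$ is a homogeneous space $\HH_N/\text{Stab}(f)$ and that $\mu * (\text{Haar pushforward}) = \text{Haar pushforward}$ because the Haar measure on the compact group $\HH_N$ is left-invariant. (iii) Prove uniqueness: on a fixed minimal set $S$, any two ergodic stationary measures coincide. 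This is where I would invoke \cite{BQ} directly — for a compact group acting transitively on $S$, the stationary measure is unique and equals the image of Haar measure; alternatively, ergodicity of the stationary measure forces it to be supported on a single minimal set, and on that set uniqueness follows from the group being compact (so the convolution operator has a unique fixed probability measure, namely the projection of Haar measure). (iv) Assemble (ii) and (iii) into the stated bijection $\nu \mapsto \text{Supp}(\nu)$, with inverse $S \mapsto$ (normalized Haar pushforward on $S$).

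The main obstacle I expect is step (iii), the injectivity/surjectivity of the correspondence at the level of ergodic measures rather than merely stationary ones. A priori a $\mu$-stationary measure need not be supported on a single minimal set, and its ergodic components must be shown to be in bijection with minimal sets; this requires knowing that distinct minimal sets carry ``independent'' stationary measures and that there are no exotic stationary measures spread across several minimal sets. For a compact group acting by isometries this is cleaner than the general Benoist--Quint setting — the key point is that the Markov operator $P_\mu f(x) = \int f(\gamma x)\, d\mu(\gamma)$ on $C(U(\KK))$ has, by compactness of $\HH_N$ and the isometric action, the property that $P_\mu$-harmonic functions are constant on $\HH_N$-orbits, whence on their closures, i.e. on minimal sets; dualizing gives the decomposition of stationary measures over minimal sets. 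I would also need to check the technical hypotheses of \cite{BQ} are met by the M\"obius action of $\scrR^\times_\square$ on $U(\KK)$ (Zariski density of $H_N$ in $\GL_2$ from Proposition~\ref{prop: Zariski closure} is exactly what is needed here), and to be careful that $U(\KK)$, though compact, is totally disconnected, so one works with the Borel structure throughout. Modulo these checks, the statement follows from \cite{BQ} together with the structural results on $\HH_N, \GG_N$ already established.
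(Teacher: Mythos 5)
Your proposal is correct, but it does more than the paper does: in the paper this theorem is stated as a direct consequence of Benoist--Quint \cite{BQ}, with no argument given beyond that citation and the structural facts already in place (Proposition~\ref{prop: Gamma mu orbits}, Corollary~\ref{cor: description of G_N}). What you supply in steps (ii)--(iii) amounts to an independent, elementary proof that exploits the special feature of this situation, namely that $\HH_{N,\mu}$ is a \emph{compact group} acting by isometries: since the support of $\mu$ generates $\HH_{N,\mu}$ as a closed monoid, the Ces\`aro averages of the convolution powers $\mu^{\ast k}$ converge to the Haar measure of $\HH_{N,\mu}$ (Kawada--It\^o), so every $\mu$-stationary measure is $\HH_{N,\mu}$-invariant; ergodic invariant measures for a compact group action are precisely the images of Haar measure on single orbits, and by Proposition~\ref{prop: Gamma mu orbits} the orbits $\HH_{N,\mu}\cdot f$ are exactly the minimal sets, which gives the bijection with inverse $S \mapsto$ (orbital Haar measure). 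This route buys a proof that does not need the full machinery of \cite{BQ}; in particular, Zariski density of $H_N$ in $\GL_2$ (Proposition~\ref{prop: Zariski closure}) is not needed for this bijection, contrary to your remark --- it enters only in the subsequent limit theorems (the analogues of the weak and strong laws of large numbers), where the reductivity hypotheses of \cite{BQ} are genuinely used. Two small points to tidy: $U(\KK)$ need not be compact when $\KK$ is not locally compact (e.g.\ $\KK=\CC_p$), but, as you yourself note, only compactness of $\HH_{N,\mu}$ and of its orbits is used; and in the ergodic-decomposition step the clean way to see that an ergodic measure charges a single minimal set is to push it forward to the orbit space (a metric space, since the action is by isometries and orbits are compact) and observe that a $\{0,1\}$-valued Borel probability measure there is a Dirac mass --- this is the rigorous form of the ``harmonic functions are constant on minimal sets'' dualization you sketch.
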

\begin{cor}\label{cor: 5104} There is a unique $\HH_{N,\mu}$-ergodic measure $\nu(\mu)$ on $U(\QQ_q)$, hence a unique $\HH_{N, \mu}$-stationary probability measure. Its support is $U(\QQ_q)$.
\end{cor}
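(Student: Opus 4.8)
The plan is to deduce the corollary from the preceding theorem (the Benoist--Quint correspondence) together with Proposition~\ref{prop: Gamma mu orbits} and the structural results on $\HH_N$ and $\GG_N$. Recall that the theorem states the map $\nu \mapsto \operatorname{Supp}(\nu)$ is a bijection between $\mu$-ergodic probability measures on $U(\KK)$ and $\HH_{N,\mu}$-minimal subsets of $U(\KK)$. So the entire corollary reduces to the single assertion: \emph{the action of $\HH_N$ on $U(\QQ_q)$ has exactly one minimal set, namely all of $U(\QQ_q)$.}

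First I would record that by Proposition~\ref{prop: Gamma mu orbits}, every $\HH_N$-minimal set of $U(\QQ_q)$ has the form $\HH_N\cdot f = \overline{\scrH_N^+\cdot f}$ for some $f\in U(\QQ_q)$. Thus it suffices to prove that for every $f\in U(\QQ_q)$ we have $\HH_N\cdot f = U(\QQ_q)$; equivalently, that $\HH_N$ (acting via M\"obius transformations through $\scrG_N$) is transitive on $U(\QQ_q)$ up to closure, i.e.\ has a dense orbit on each $\QQ_q$-point of $U$ and that orbit is all of $U(\QQ_q)$. Here is where the explicit description of the $p$-adic closure enters: by Corollary~\ref{cor: description of G_N}, $\GG_N = \Pi(\scrR^\times_\square)$ is independent of $N$, and $\scrR^\times_\square$ has index at most $2$ (at most $4$ if $p=2$) in $\scrR^\times = \scrA_2$. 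So up to a subgroup of bounded index, $\HH_N$ acts on $U$ like the full group $\scrA_2 = \Aut(\bar F)$. The key geometric input is property (3) of the Gross--Hopkins period map: $J \overset{\sim}{\to} U$ is an $\scrA_2$-equivariant isomorphism, and $J(\QQ_q)$ is the open unit disc $\{x : \ord(x)\geq 1/2\}$; combined with \S\ref{subsec: 1}--\ref{subsec: 2}, the $\scrA_2$-orbits on $X$ (hence on $U$) are exactly the sets $\{y : F_x \cong F_y\}$, and one checks these are dense: the $\scrA_2$-orbit of the base point $0$ meets every residue disc inside $U$ because any two formal groups over $\overline{\FF}_p$ of height $2$ are isomorphic, so the orbit closure of $0$ is all of $U(\QQ_q)$. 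A short argument — using that $\scrR^\times_\square$ has finite index in $\scrR^\times$ and that $U(\QQ_q)$ is a homogeneous-like space with no proper closed invariant subset of the $\scrR^\times$-action other than the obvious strata — then upgrades ``$\scrA_2\cdot f$ dense'' to ``$\HH_N\cdot f = U(\QQ_q)$'' for every $f$.

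Granting that $U(\QQ_q)$ is the unique $\HH_{N,\mu}$-minimal set, the Benoist--Quint theorem immediately gives a unique $\mu$-ergodic probability measure $\nu(\mu)$ on $U(\QQ_q)$ with $\operatorname{Supp}(\nu(\mu)) = U(\QQ_q)$. Since a $\mu$-stationary probability measure decomposes into an integral of $\mu$-ergodic ones and uniqueness of the ergodic measure forces uniqueness of the stationary one, we conclude there is a unique $\HH_{N,\mu}$-stationary probability measure, namely $\nu(\mu)$, again with full support $U(\QQ_q)$. This is exactly the statement of the corollary.

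The main obstacle I anticipate is the transitivity step: showing that $\HH_N$ (or $\scrG_N$) has \emph{no} proper nonempty closed invariant subset of $U(\QQ_q)$, i.e.\ that $\overline{\scrH_N^+ \cdot f} = U(\QQ_q)$ for every $f$. The subtlety is that a priori the periodic/fixed-point strata of individual elements (the quasi-canonical loci $QC(\bar\iota,K,s)$ of \S\ref{subsec: 2}) are proper closed invariant \emph{sets for single elements}, so one must genuinely use the non-commutativity — indeed Zariski density in $\PGL_2$ (Proposition~\ref{prop: Zariski closure}) and $p$-adic density of $\GG_N$ in $\Pi(\scrR^\times_\square)$ (Corollary~\ref{cor: description of G_N}) — to rule out a common invariant proper subset. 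Concretely, given a closed $\HH_N$-invariant $S\subsetneq U(\QQ_q)$, one picks $f\in S$; its stabilizer in $\scrA_2$ is either $\ZZ_p^\times$ (if $f$ has trivial endomorphisms) or $\bar\iota_f(K_f^\times)$ — in either case a proper closed subgroup — so the orbit map $\HH_N/\operatorname{Stab}_{\HH_N}(f)\to S$ has image of "full dimension", and since $\GG_N$ is an open subgroup of $\PGL_2(\QQ_q)\cap\Pi(\scrR^\times)$ acting on the one-dimensional space $U$, the orbit must be open in $U(\QQ_q)$; being also closed and $U(\QQ_q)$ connected in the relevant sense, it is everything. Making this ``open orbit'' argument precise in the rigid-analytic/$p$-adic setting (where $U(\QQ_q)$ is totally disconnected, so ``open and closed'' does not immediately give ``everything'') is the technical heart, and I expect it to require invoking the explicit transitivity of $\Aut(\bar F)$ on isomorphism classes of deformations from Gross--Hopkins \cite[Proposition 14.13]{GH} rather than a soft connectedness argument.
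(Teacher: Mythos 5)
Your overall skeleton matches the paper's: reduce the corollary to showing that $U(\QQ_q)$ is the \emph{unique} $\HH_{N,\mu}$-minimal set, using Proposition~\ref{prop: Gamma mu orbits} and the Benoist--Quint bijection, and then get uniqueness of the stationary measure from the ergodic/extreme-point decomposition (the paper invokes Krein--Milman for exactly this). But the heart of the matter --- why there is only \emph{one} minimal set --- is not established by your argument, and this is where the paper does something you are missing: it proves \emph{transitivity} of $\scrR^\times_\square = sat(\HH_N)$ on $U(\QQ_q)$ by an explicit computation. Given $x\in U(\QQ_q)$, the matrix $\left(\begin{smallmatrix} a & p(xa)^\sigma \\ xa & a^\sigma\end{smallmatrix}\right)$ carries the base point $(1,0)$ to $x$, and its determinant $aa^\sigma(1-pxx^\sigma)$ can be placed in $(\ZZ_p^\times)^2\langle\ell\rangle$ by choosing $a$ suitably (surjectivity of the norm), so it lies in $\scrR^\times_\square$. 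Since $\ZZ_p^\times$ is central, acts trivially on $\PP^1$, and is contained in the stabilizer $B$ of the base point, $\HH_N$-orbits coincide with $\scrR^\times_\square$-orbits (the double coset space $\HH_{N,\mu}\backslash\scrR^\times_\square/B = sat(\HH_{N,\mu})\backslash\scrR^\times_\square/B$ is a singleton), so there is exactly one minimal set, namely $U(\QQ_q)$. You never address the square-class/determinant condition that makes the transitive group land inside $sat(\HH_N)$ rather than merely $\scrR^\times$; your appeal to ``finite index'' does not by itself transfer transitivity to a finite-index subgroup.

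Your two substitutes for this step do not work. First, the claim that the $\scrA_2$-orbit of the base point is dense ``because any two formal groups over $\overline{\FF}_p$ of height $2$ are isomorphic'' is a non sequitur: GH Proposition 14.13 describes orbits in terms of isomorphism of the \emph{deformations over the base} $R$, not of their special fibres, so isomorphism over $\overline{\FF}_p$ gives no information about the orbit of $0$ in $X(\QQ_q)$; moreover density of the orbit of one special point would not give $\HH_N\cdot f = U(\QQ_q)$ for arbitrary $f$. (It happens to be true that the orbit of $0$ is all of $U(\QQ_q)$, but that is exactly the transitivity statement one must prove --- e.g.\ by the paper's matrix, or equivalently by showing every deformation of $\bar F$ over $W(\FF_q)$ carries CM by $W(\FF_q)$ for some embedding --- not something that follows from the height-$2$ classification in characteristic $p$.) Second, your ``open orbit'' sketch, which you yourself flag as incomplete because $U(\QQ_q)$ is totally disconnected, would at best show that every orbit is open and closed, hence that there are \emph{finitely many} minimal sets; that does not yield uniqueness of the ergodic measure, which is the whole content of the corollary. (Incidentally, your stabilizer dichotomy is off: since every point of $U(\QQ_q)$ is in the orbit of $0$, every stabilizer is a conjugate of $W(\FF_q)^\times$; a stabilizer equal to $\ZZ_p^\times$ is dimensionally impossible for an orbit inside the two-$\ZZ_p$-parameter space $U(\QQ_q)$.) So the proposal has the right frame but a genuine gap precisely at the transitivity step.
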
 
\begin{proof} The $\mu$-ergodic measures on $U(\QQ_{q})$ are in bijection with $\HH_{N, \mu}$-minimal sets. We claim that $\scrR^\times_\square$ acts transitively on $U(\QQ_q)$: given $x\in U(\QQ_{q})$, for any $a\in W(\FF_q)^\times$ the matrix $\left(\begin{smallmatrix} a & p(xa)^\sigma \\ xa & a^\sigma\end{smallmatrix}\right)$ takes $(1, 0)$ to $x$ and has determinant $aa^\sigma (1 - pxx^\sigma)$ which for an appropriate choice of $a$ lies in $(\ZZ_p^\times)^2\langle \ell \rangle$. That is, it belongs to $\scrR^\times_\square$.

As $\scrR^\times_\square$ acts transitively on $U(\QQ_q)$ we can write $U(\QQ_q)=\scrR^\times_\square /B$, where $B$ is the stabilizer of $(1, 0)$ in $\scrR^\times_\square$ and it contains $\ZZ_p^\times$. Thus, by Proposition~\ref{prop: Gamma mu orbits}, the minimal orbits are in bijection with $\HH_{N, \mu} \backslash \scrR^\times_\square /B = sat(\HH_{N, \mu}) \backslash \scrR^\times_\square /B$, which is a singleton. The ergodic measures are the extremal points of the convex closed set of probability measures on the compact metric space $U(\QQ_q)$ and the Krein-Milman theorem (see e.g., \cite{Sim} Chapter 8) then provides the uniqueness of the $\HH_{N, \mu}$-stationary probability measure.
\end{proof}

By Proposition~\ref{prop: Zariski closure} the Zariski closure $H_N$ of $\scrH_N^+$ is $\GL_2(\QQ_{q})$, and is in particular reductive. We may therefore apply the results of Benoist and Quint \cite{BQ1} and conclude the following. 
\begin{thm} For every $x\in U(\KK)$ the measure
\begin{equation}
\label{eqn: limit measure weak}
 \nu_x = \lim_{n\arr \infty} \frac{1}{n+1} \sum_{k=0}^n \mu^{\ast k}\ast \delta_x
 \end{equation}
exists and is a $\mu$-stationary measure, depending continuously on $x$. Thus, if $x\in U(\QQ_q)$, $\nu_x = \nu(\mu)$ and is independent of $x$.
\end{thm}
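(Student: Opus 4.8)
The plan is to deduce this from the general ergodic machinery of Benoist--Quint \cite{BQ} applied to the measure $\mu$ on $\GL_2(\QQ_q)$ supported on the generating set $\Gamma$ of $\scrH_N^+$. First I would check the hypotheses needed to invoke their results: the Zariski closure $H_N$ of the group generated by $\mathrm{Supp}(\mu)$ is $\GL_2(\QQ_q)$ by Proposition~\ref{prop: Zariski closure}, hence reductive, and the action on $\PP^1(\KK)$ is the standard one by M\"obius transformations, with $U(\KK)\subset\PP^1(\KK)$ a compact $\HH_N$-invariant set on which the group acts by isometries. Since $\mu$ is finitely supported, it has finite (indeed compact) support, so all integrability conditions are automatic. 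Benoist--Quint's law of large numbers for the Ces\`aro averages of the random walk then gives, for each fixed $x$, the existence of the limit
\[
\nu_x = \lim_{n\to\infty}\frac{1}{n+1}\sum_{k=0}^n \mu^{\ast k}\ast\delta_x,
\]
which is automatically $\mu$-stationary, and their continuity statement gives that $x\mapsto\nu_x$ is continuous (in the weak-$\ast$ topology) on $U(\KK)$.

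Next I would pin down the value of $\nu_x$ when $x\in U(\QQ_q)$. The point is that a $\mu$-stationary probability measure is in particular $\HH_{N,\mu}$-invariant under the group generated by the support (this is the standard fact that for a compactly supported $\mu$ generating a compact group, stationarity upgrades to invariance, which one can cite from \cite{BQ} or argue directly by a martingale/convexity argument), so its support is a union of $\HH_{N,\mu}$-minimal subsets of $U(\QQ_q)$. By the Corollary preceding this theorem there is a \emph{unique} $\HH_{N,\mu}$-minimal subset of $U(\QQ_q)$, namely all of $U(\QQ_q)$ --- this came from the transitivity of $\scrR^\times_\square$ on $U(\QQ_q)$ together with the identification $\HH_{N,\mu}\backslash\scrR^\times_\square/B = sat(\HH_{N,\mu})\backslash\scrR^\times_\square/B$ being a singleton --- and correspondingly a unique $\HH_{N,\mu}$-ergodic (equivalently here, $\mu$-ergodic) probability measure $\nu(\mu)$ on $U(\QQ_q)$, with support $U(\QQ_q)$. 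Since any $\mu$-stationary measure is a barycentre of ergodic ones by Choquet theory, uniqueness of the ergodic measure forces $\nu_x = \nu(\mu)$ for every $x\in U(\QQ_q)$, independent of $x$.

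The one subtlety I would be careful about is the distinction between the abstract coordinate field $\KK$ (an arbitrary extension of $\QQ_q$) and $\QQ_q$ itself: for general $\KK$ the set $U(\KK)$ may support several minimal sets (the $\HH_N$-orbits $\HH_N\cdot f$ of Proposition~\ref{prop: Gamma mu orbits}), so $\nu_x$ genuinely depends on $x$ there, whereas the collapse to a single measure is special to $\KK = \QQ_q$, where transitivity of $\scrR^\times_\square$ on $U(\QQ_q)$ is available. So the statement for general $\KK$ is just ``the limit exists, is $\mu$-stationary, varies continuously'', and only over $\QQ_q$ do we get $\nu_x=\nu(\mu)$. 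The main obstacle --- really the only nontrivial input --- is verifying that the precise form of the Benoist--Quint results we are quoting (existence and continuity of the empirical/Ces\`aro limit measure, and the stationary$\Rightarrow$invariant principle in the compact-group setting) applies verbatim in the $p$-adic analytic context of $\PP^1(\QQ_q)$ with an isometric action; once that citation is in place, everything else is a formal consequence of the uniqueness statement already proved. I would therefore structure the write-up as: (i) recall the relevant theorem of \cite{BQ}; (ii) apply it using $H_N = \GL_2(\QQ_q)$ reductive to get existence, stationarity, and continuity of $\nu_x$; (iii) invoke the preceding Corollary and Choquet/Krein--Milman to identify $\nu_x$ with $\nu(\mu)$ over $\QQ_q$.
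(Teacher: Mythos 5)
Your proposal is correct and follows essentially the same route as the paper, which simply verifies that the Zariski closure $H_N$ is $\GL_2(\QQ_q)$ (hence reductive) via Proposition~\ref{prop: Zariski closure}, invokes the results of Benoist--Quint for existence, stationarity and continuity of the Ces\`aro limit, and then identifies $\nu_x$ with $\nu(\mu)$ over $\QQ_q$ using the uniqueness of the stationary measure established in the preceding corollary. Your added care about the distinction between general $\KK$ and $\QQ_q$, and the stationarity-implies-invariance step for the compact group $\HH_{N,\mu}$, are consistent with what the paper leaves implicit.
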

Note that $\sum_{k=0}^n \mu^{\ast k}\ast \delta_x$ is nothing but the images of $x$ (counted with multiplicities) under the composition of up to $n$ automorphisms of the form $\gamma_1^{-1}, \dots, \gamma_t^{-1}$. This theorem is the measure-theoretic manifestation that the orbit of $x$ is ``well-distributed" in the closure of its orbit. Note that while the measure depends on $\Gamma$, the support of the measure, i.e. $\HH_{N, \mu}\cdot x$, does not. Thus, whatever ``branches" of the Hecke operator $T_\ell$ we choose, the orbit is well-distributed in its closure that is independent of $N$.

While the theorem above can viewed as a non-commutative analogue of the weak law of large numbers, the next result is the analogue of the strong law of large numbers. 
\begin{thm} For every $x\in U(\KK)$, for almost all sequences $\gamma = (\gamma_{i_n})_{n=1}^\infty$, $\gamma_{i_n} \in \{ \gamma_1, \dots, \gamma_t\}$, the measure
\begin{equation}
\label{eqn: limit measure strong} \nu_{x, \gamma} = \lim_{n\arr \infty}  \frac{1}{n} \sum_{k=1}^n \delta_{\gamma_{i_n}^{-1} \cdots \gamma_{i_1}^{-1}x}
\end{equation}
exists and is a $\mu$-stationary $\mu$-ergodic probability measure. Furthermore, 
\[ \nu_x = \int \nu_{x, \gamma} d\gamma.\]
In particular, if $x\in U(\QQ_q)$ 
\[ \nu_{x, \gamma} = \nu_x = \nu(\mu).\]
\end{thm}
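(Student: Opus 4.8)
The final theorem is a pointwise ergodic statement — a Breiman-type strong law of large numbers for the random walk driven by $\mu$ acting on $\PP^1$. The plan is to deduce it directly from the general results of Benoist--Quint \cite{BQ} applied to our situation, rather than to reprove anything from scratch. First I would set up the hypotheses required by \cite{BQ}: the measure $\mu$ is finitely supported on $\GL_2(\QQ_q)$ with support generating (as a monoid, and we included the identity) the group $\scrH_N^+$, whose Zariski closure is $\GL_2(\QQ_q)$ by Proposition~\ref{prop: Zariski closure}, hence reductive; the action on $\PP^1(\KK)$ is the standard projective action, and the closure $\HH_{N,\mu}$ is compact in the $p$-adic topology. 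Compactness is the crucial structural feature: it forces every orbit closure to be minimal (Proposition~\ref{prop: Gamma mu orbits}) and guarantees that the Furstenberg-type boundary maps degenerate, so the abstract machinery of \cite{BQ} simplifies considerably.

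Next I would invoke the Breiman/ergodic-theorem part of \cite{BQ}: for the Bernoulli shift on $(\Gamma^\NN, \mu^{\otimes \NN})$ with the induced cocycle action on $\PP^1(\KK)$, for $\mu^{\otimes\NN}$-almost every sequence $\gamma = (\gamma_{i_n})_n$ and every starting point $x$, the empirical measures $\frac1n\sum_{k=1}^n \delta_{\gamma_{i_n}^{-1}\cdots\gamma_{i_1}^{-1}x}$ converge weakly to a random limit $\nu_{x,\gamma}$ which is almost surely a $\mu$-stationary, $\mu$-ergodic probability measure, and which satisfies the disintegration $\nu_x = \int \nu_{x,\gamma}\, d\gamma$ of the Cesàro-averaged measure $\nu_x$ from the previous theorem. (Here the inverses appear because the walk is built from the branches $\gamma_i^{-1}$ of $T_\ell$, exactly as in the remark following the weak-convergence theorem.) This gives the first two assertions verbatim.

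For the final ``in particular'' clause, when $x\in U(\QQ_q)$ I would argue as follows. Each $\nu_{x,\gamma}$ is $\mu$-ergodic, hence by the bijection stated in the previous theorem its support is an $\HH_{N,\mu}$-minimal subset of $U(\KK)$. But $x\in U(\QQ_q)$ and each $\gamma_i\in\scrR^\times_\square = sat(\HH_N)$ preserves $U(\QQ_q)$ and acts by isometries, so the entire orbit $\{\gamma_{i_n}^{-1}\cdots\gamma_{i_1}^{-1}x\}$ lies in $U(\QQ_q)$, whence $\operatorname{Supp}(\nu_{x,\gamma})\subseteq U(\QQ_q)$. By the computation in the proof of the previous corollary, $sat(\HH_{N,\mu})\backslash \scrR^\times_\square/B$ is a singleton, so $U(\QQ_q)$ contains a \emph{unique} $\HH_{N,\mu}$-minimal set, namely all of $U(\QQ_q)$; therefore $\operatorname{Supp}(\nu_{x,\gamma}) = U(\QQ_q)$ and, by the uniqueness of the $\mu$-ergodic (equivalently $\mu$-stationary) probability measure with that support, $\nu_{x,\gamma} = \nu(\mu)$ for almost all $\gamma$. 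Combined with $\nu_x = \int \nu_{x,\gamma}\,d\gamma$, this also gives $\nu_x = \nu(\mu)$, recovering the conclusion consistently with the weak-convergence theorem.

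The main obstacle I anticipate is not mathematical depth but bookkeeping: one must check carefully that our normalizations match those in \cite{BQ} — in particular that the non-archimedean setting (a $p$-adic local field $\QQ_q$ rather than $\RR$) is genuinely covered, that the use of inverse branches $\gamma_i^{-1}$ is compatible with the left-random-walk conventions there, and that the ``reductive Zariski closure'' hypothesis is exactly what is needed for the pointwise (as opposed to merely Cesàro) statement. Once the dictionary is fixed, the argument is essentially a citation plus the short minimality/transitivity observation above; no new estimate is required because compactness of $\HH_{N,\mu}$ trivializes the contraction/proximality analysis that occupies the bulk of \cite{BQ} in the general reductive case.
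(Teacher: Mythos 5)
Your proposal is correct and follows essentially the same route as the paper, which likewise deduces this theorem directly from Benoist--Quint \cite{BQ} using that the Zariski closure of $\scrH_N^+$ is $\GL_2(\QQ_q)$ (Proposition~\ref{prop: Zariski closure}), and handles the ``in particular'' clause via the uniqueness of the $\HH_{N,\mu}$-ergodic measure on $U(\QQ_q)$ established in the preceding corollary. Your additional observations (the orbit of $x\in U(\QQ_q)$ stays in $U(\QQ_q)$ since the generators lie in $\scrR^\times_\square$ and act by isometries, and transitivity of $\scrR^\times_\square$ forces the support to be all of $U(\QQ_q)$) simply make explicit details the paper leaves implicit.
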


Finally, we recall the implications for CM points. 
\begin{thm}\label{thm:5107} Let $x = \uE = (E, P)$ be a CM point of supersingular reduction on $X_1(N)$, $N\geq 3$, such that $\End(E)$ has discriminant prime to $p$. Let $\scrH_N^+ = \scrH_N^+(\uE)$ denote the set of branches of the iterations of the Hecke operator $T_\ell$ returning to the residue disc $D = D(\uE)$. Let $\HH_{N, \mu}$ be its $p$-adic closure equipped with the probability measure $\mu$ relative to the generators $\{ \gamma_1, \dots, \gamma_t\}$ as defined above.

There is a unique $\HH_{N,\mu}$-invariant (ergodic) probability measure $\nu(\mu)$ on $J(\QQ_q)$, and we have
\begin{equation}\label{eqn: final measure}
\begin{split}
 \nu(\mu) &= \lim_{n\arr \infty} \frac{1}{n+1} \sum_{k=0}^n \mu^{\ast k}\ast \delta_x\\
 & = \lim_{n\arr \infty}  \frac{1}{n} \sum_{k=1}^n \delta_{\gamma_{i_n}^{-1} \cdots \gamma_{i_1}^{-1}x}, 
\end{split}
\end{equation}
for almost all sequences $\gamma = (\gamma_{i_n})_{n=1}^\infty$, $\gamma_{i_n} \in \{ \gamma_1, \dots, \gamma_t\}$.
\end{thm}

\

\id {\bf Acknowledgements.} This work was done during visits to King's College London and McGill university and we thank both institutions for their hospitality. The support of NSERC through its Discovery Grant program and the London Mathematical society through its Research Grant is gratefully acknowledged. We thank an anonymous referee of the first version of this paper for excellent suggestions that allowed us to strengthen our results in the supersingular case and improve the overall presentation. We thank the referee of the second version for many useful comments and their thorough reading of the manuscript. As a result of their work, this paper is significantly better than the original version.


\end{document}